\newtheorem{Theorem}{Theorem } [section]
\newtheorem{lemma}[Theorem]{Lemma}
\newtheorem{corollary}[Theorem]{Corollary}
\newtheorem{proposition}[Theorem]{Proposition}
\newtheorem{Definition}{Definition}
\numberwithin{equation}{section}
\DeclareMathOperator{\sgn}{sgn}
\DeclareMathOperator{\supp}{supp}
\DeclareMathOperator{\re}{Re}
\DeclareMathOperator{\meas}{meas}
\begin{document}

\title{The Cauchy problem for a family of two-dimensional fractional\\
Benjamin-Ono equations }
\author{Eddye Bustamante, Jos\'e Jim\'enez Urrea and Jorge Mej\'{\i}a}
\subjclass[2000]{35Q53, 37K05}

\keywords{Benjamin Ono equation}
\address{Eddye Bustamante M., Jos\'e Jim\'enez Urrea, Jorge Mej\'{\i}a L. \newline
Departamento de Matem\'aticas\\Universidad Nacional de Colombia\newline
A. A. 3840 Medell\'{\i}n, Colombia}
\email{eabusta0@unal.edu.co, jmjimene@unal.edu.co, jemejia@unal.edu.co}

\begin{abstract}
In this work we prove that the initial value problem (IVP) associated to the fractional two-dimensional Benjamin-Ono equation
$$\left. \begin{array}{rl} u_t+D_x^{\alpha} u_x +\mathcal Hu_{yy} +uu_x &\hspace{-2mm}=0,\qquad\qquad (x,y)\in\mathbb R^2,\; t\in\mathbb R,\\ u(x,y,0)&\hspace{-2mm}=u_0(x,y), \end{array} \right\}\,,$$
where $0<\alpha\leq1$, $D_x^{\alpha}$ denotes the operator defined through  the Fourier transform by
\begin{align}
(D_x^{\alpha}f)\widehat{\;}(\xi,\eta):=|\xi|^{\alpha}\widehat{f}(\xi,\eta)\,,
\end{align}
 and $\mathcal H$ denotes the Hilbert transform with respect to the variable $x$, is locally well posed in the Sobolev space $H^s(\mathbb R^2)$ with $s>\dfrac32+\dfrac14(1-\alpha)$.
\end{abstract}

\maketitle
\section{Introduction}

In this article we consider a class of initial value problems (IVP) associated to the family of fractional two-dimensional Benjamin-Ono (BO) equations
\begin{align}
\left. \begin{array}{rl}
u_t+D_x^{\alpha} u_x +\mathcal Hu_{yy}+uu_x &\hspace{-2mm}=0,\qquad\qquad (x,y)\in\mathbb R^2,\; t\geq 0,\;u(x,y,t)\in\mathbb R,\\
u(x,y,0)&\hspace{-2mm}=u_0(x,y),
\end{array} \right\}\label{BO}
\end{align}
where $0<\alpha\leq1$, $D_x^{\alpha}$ denotes the operator defined through  the Fourier transform by
\begin{align}
(D_x^{\alpha}f)\widehat{\;}(\xi,\eta):=|\xi|^{\alpha}\widehat{f}(\xi,\eta)\,,
\end{align}
 and $\mathcal H$ denotes the Hilbert transform with respect to the variable $x$, which is defined through the Fourier transform by
\begin{align} 
(\mathcal H f)\widehat{\;\;}(\xi,\eta):= -i \sgn(\xi)\widehat f(\xi,\eta)\;.
\end{align} 

When $\alpha=1$ the equation in \eqref{BO}, called Shrira equation, is a bidimensional generalization of the BO equation 
\begin{align}
u_t+\mathcal H u_{xx} +u \partial_x u =0,\qquad\qquad x\in\mathbb R,\; t\geq 0,\label{BO1}
\end{align}
and was deduced by Pelinovsky and Shrira in \cite{PS1995}  in connection with the propagation of long-wave weakly nonlinear two-dimensional perturbations in parallel boundary-layer type shear flows. Very recently, Esfahany and Pastor in \cite{EP2017} studied for this equation existence, regularity and decay properties of solitary waves. Different two-dimensional generalizations of the BO equation can be found, among others, in the references \cite{AM}, \cite{K2006}, \cite{AC1991}, and \cite{AS1980}.\\

Using the abstract theory developed by Kato in \cite{Ka1975} and \cite{Ka1979}, it can be established the local well-posedness (LWP) of the IVP \eqref{BO} in $H^s(\mathbb R^2)$ with $s>2$ (see Lemma \ref{L5.2} below for $\alpha\in(0,1]$, and \cite{PS2015} for $\alpha=1$). However, this approach does not employ sufficiently the dispersive terms in the equation in order to have LWP in Sobolev spaces  with less regularity.\\ 

Ponce in \cite{P1991} and Koch and Tzvetkov in \cite{KT2003} used the dispersive character of the linear part  of the unidimensional BO equation \eqref{BO1} to prove the LWP of the IVP associated to this equation in Sobolev spaces $H^s(\mathbb R)$, with $s=3/2$ and $s>5/4$, respectively. In this manner, they improved the classical result ($s>3/2$) obtained from a nonlinear commutator argument (see \cite{KaPo1988} and  \cite{S1979}).\\

In particular, in \cite{KT2003} the improvement is a consequence of a nonlinear estimate, which, by means of a standard Littlewood-Paley decomposition, follows from a Strichartz type inequality for a linearized version of the BO equation.\\

In \cite{CP2016} Cunha and Pastor applied the technique, introduced by Koch and Tzvetkov in \cite{KT2003}, in order to study the Benjamin-Ono-Zakharov-Kuznetsov equation in low regularity Sobolev spaces ($H^s(\mathbb R^2)$ with $s>11/8$).\\

In \cite{KeKoe2003} Kenig and Koenig, studying the unidimensional BO equation and based on previous ideas of Koch and Tzvetkov, obtained a refined version of the Strichartz estimate, by dividing the time interval into small subintervals, whose length depends on the spatial frequency of the function. This new Strichartz estimated allowed them to establish LWP of the Cauchy problem in $H^s(\mathbb R)$, with $s> 9/8$.\\

Kenig in \cite{Ke2004}, in the context of the Kadomtsev-Petviashvili equation (KP-I), adapted the argument of Koch and Tzvetkov in \cite{KT2003}, to obtain a refined Strichartz estimate (see Lemma 1.7 in \cite{Ke2004}), similar to that in \cite{KeKoe2003}, in order to prove the LWP of the IVP of the KP-I equation in
\begin{align}
Y_s:=\{\phi\in L^2(\mathbb R^2)\mid \|\phi\|_{L^2}+\|(1-\partial_x^2)^{s/2}\phi\|_{L^2}+\|\partial_x^{-1}\partial_y\phi\|_{L^2}<\infty\}\,,
\end{align}
for $s>3/2$.\\

As it is observed by Linares, Pilod and Saut in \cite{LPS2014}, the study of the well-posedness for nonlinear dispersive equations is based on the comparison between nonlinearity and dispersion. It is usual to fix the dispersion and vary the nonlinearity. The approach we use in this work, inspired in \cite{LPS2014}, is to fix the quadratic nonlinearity and vary the dispersion.\\

In \cite{LPS2014} Linares, Pilod and Saut investigated how a weakly dispersive perturbation  of the inviscid Burgers equation enlarges the space of resolution of the local Cauchy problem. Specifically they studied the family of fractional KdV equations 
\begin{align}
u_t-D_x^{\alpha} u_x +u \partial_x u =0,\qquad\qquad x\in\mathbb R,\; t\geq 0,\;u(x,t)\in\mathbb R, \,\label{fKdV}
\end{align}
in the less dispersive case ($\alpha\in(0,1)$), and for $s>3/2-3\alpha/8$ they proved the LWP of the Cauchy problem for the equation \eqref{fKdV} in the Sobolev space $H^s(\mathbb R)$.\\

Using the same approach as that used in \cite{LPS2014}, Linares, Pilod and Saut in \cite{LPS2017} considered the family of fractional Kadomtsev-Petviashvili equations
\begin{align}
u_t+uu_x-D_x^{\alpha}u_x\mp\partial_x^{-1}u_{yy}=0\,,\quad0<\alpha\leq2\,,\label{KPF}
\end{align}
and established the LWP in $Y_s$ of the IVP associated to the equations \eqref{KPF}, with $s>2-\alpha/4$.\\

The main ingredient in the well-posedness analysis  in the works \cite{LPS2014} and \cite{LPS2017} is a Strichartz estimate, similar to that obtained by Kenig and Koenig in \cite{KeKoe2003} and by Kenig in \cite{Ke2004}.\\
 
Inspired by the works \cite{LPS2014} and \cite{LPS2017}, in this paper we also study the relation between the amount of dispersion and the size of the Sobolev space in order to have LWP of the family of IVPs \eqref{BO}.\\

Following the scheme developed in \cite{LPS2017}, we also obtain a Strichartz estimate (see Lemma \ref{L4.11} below), which, combined with some energy estimates, allows us to prove the LWP of the IVP \eqref{BO} in $H^s(\mathbb R^2)$, with $s>3/2+(1-\alpha)/4$. This is the main result of this work and its statement is as follows.

\begin{Theorem}\label{T4.1} Let $\alpha\in(0,1]$. Let us define $s_{\alpha}:=\frac32+\frac14(1-\alpha)=\frac74-\frac\alpha4$ and assume that $s>s_{\alpha}$. Then, for any $u_0\in H^s(\mathbb R^2)$, there exist a positive time $T=T(\|u_0\|_{H^s})$ and a unique solution of the IVP \eqref{BO} in the class
$$C([0,T];H^s(\mathbb R^2))\cap L^1([0,T];W_{xy}^{1,\infty}(\mathbb R^2)).$$
Moreover, for any $0<T'<T$, there exists a neighborhood $\mathcal U$ of $u_0$ in $H^s(\mathbb R^2)$ such that the flow map datum-solution
\begin{align*}
S^s_{T'}:\mathcal U&\to C([0,T'];H^s(\mathbb R^2))\\
v_0&\mapsto v,
\end{align*}
is continuous.
\end{Theorem}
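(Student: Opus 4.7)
The plan is to follow the Koch--Tzvetkov / Kenig--Koenig / Linares--Pilod--Saut scheme indicated in the introduction: reduce the proof to a priori $H^s$ estimates that close once $\|u\|_{L^1_T W^{1,\infty}_{xy}}$ is controlled via the refined Strichartz estimate (Lemma \ref{L4.11}) in place of Sobolev embedding. Existence will be obtained by regularization: smoothing the datum to $u_0^\varepsilon := \rho_\varepsilon * u_0 \in H^\infty(\mathbb R^2)$ and applying Lemma \ref{L5.2} to produce classical solutions $u^\varepsilon \in C([0,T_\varepsilon];H^s(\mathbb R^2))$. The uniform a priori bound derived below lets me take $T_\varepsilon$ to be a common $T = T(\|u_0\|_{H^s})$, extract a weak-$*$ limit, and identify it as the desired solution.

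For the energy estimate I apply $J^s := (1-\Delta)^{s/2}$ to the equation and pair with $J^s u$. The dispersive part $D_x^\alpha \partial_x + \mathcal H \partial_y^2$ is skew-adjoint and drops out. The contribution $\int J^s(u u_x)\, J^s u$ is treated by Kato--Ponce commutator and product estimates, yielding
\begin{align*}
\frac{d}{dt}\|u(t)\|_{H^s}^2 \;\lesssim\; \bigl(\|\nabla_{x,y} u(t)\|_{L^\infty_{xy}} + \|u(t)\|_{L^\infty_{xy}}\bigr)\|u(t)\|_{H^s}^2.
\end{align*}
By Gronwall, the entire construction reduces to bounding $\int_0^T \|u\|_{W^{1,\infty}_{xy}}\,dt$ under the bootstrap assumption $\|u\|_{L^\infty_T H^s} \le 2\|u_0\|_{H^s}$. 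Sobolev embedding alone would require $s>2$; the Strichartz step is precisely what buys the missing $1/4 + \alpha/4$ derivatives.

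The decisive and most delicate step is the refined Strichartz inequality of Lemma \ref{L4.11}. I would derive it by a Littlewood--Paley decomposition combined with a frequency-dependent time localization, in the spirit of Kenig--Koenig and of Kenig for KP-I. For the dyadic piece $P_N u$ with $x$-frequency of size $\sim N$, the phase $\tau = \xi|\xi|^\alpha - \sgn(\xi)\eta^2$ exhibits one-dimensional BO type dispersion in $\xi$ and Schr\"odinger-type dispersion in $\eta$. On subintervals $I_j$ of length $\delta_N = N^{-\theta(\alpha)}$ the evolution is essentially linear; a stationary-phase / $TT^*$ argument on each $I_j$, followed by summation over the roughly $T/\delta_N$ pieces, gives a bound of the form
\begin{align*}
\|P_N u\|_{L^2([0,T];L^\infty_{xy})} \;\lesssim\; N^{\sigma(\alpha)} \bigl(T^{1/2}\|P_N u_0\|_{L^2} + \|P_N(u u_x)\|_{L^1_T L^2_{xy}}\bigr),
\end{align*}
with $\sigma(\alpha)$ chosen by balancing the dispersive gain against the loss from time-localization. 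After dyadic summation and absorbing one extra derivative to pass from $L^\infty$ to $W^{1,\infty}$, the Sobolev exponent needed to close the a priori bound is exactly $s_\alpha = 7/4 - \alpha/4$. The main technical obstacle is carrying out this oscillatory-integral analysis with the anisotropic symbol above and tracking the exponents sharply; the LPS calculation for fractional KP provides the template.

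Uniqueness is standard: for two solutions $u_1, u_2 \in C([0,T];H^s) \cap L^1_T W^{1,\infty}_{xy}$, the difference $w = u_1 - u_2$ satisfies $w_t + D_x^\alpha w_x + \mathcal H w_{yy} + \tfrac12 \partial_x((u_1 + u_2)w) = 0$; an $L^2$ energy estimate and Gronwall then force $w \equiv 0$. For continuous dependence of the datum-solution map I would use a Bona--Smith argument: mollify both data, exploit the uniform a priori bound on a neighborhood of $u_0$ so that the mollified solutions persist at higher regularity, control the $L^2$-norm of the difference by the uniqueness-type estimate, and interpolate to obtain continuity in $C([0,T'];H^s)$ for every $T'<T$.
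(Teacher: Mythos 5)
Your overall architecture coincides with the paper's: a refined Strichartz estimate obtained by frequency\--localized time\--slicing controls $\|u\|_{L^1_T W^{1,\infty}_{xy}}$, a Kato--Ponce commutator energy estimate closes the $H^s$ bound, existence follows by Bona--Smith regularization of the datum together with Lemma \ref{L5.2}, and uniqueness is the $L^2$ Gronwall estimate. However, your treatment of continuous dependence contains a genuine gap. You propose to ``control the $L^2$-norm of the difference by the uniqueness-type estimate, and interpolate to obtain continuity in $C([0,T'];H^s)$.'' Interpolating between the $L^2$ contraction bound and the uniform a priori $H^s$ bound only yields convergence in $H^\sigma$ for $\sigma<s$; at $\sigma=s$ the interpolation degenerates and gives nothing. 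The paper does not interpolate here: it propagates a weighted Littlewood--Paley norm $\sum_\lambda w_\lambda^2\|u_\lambda(t)\|^2_{L^2}$ with weights satisfying $w_\lambda/\lambda^s\to\infty$, chosen adapted to the convergent sequence of data (Lemmas \ref{KTL3.6} and \ref{KTL4.1}, following Koch--Tzvetkov); this is precisely the device that upgrades $H^{s-}$ convergence to $H^s$ convergence. If you instead pursue the fully quantitative Bona--Smith route, you must run an energy estimate for the difference of two mollified solutions at the top regularity, where the term $\|w\|_{L^\infty}\|u^{\epsilon'}\|_{H^{s+1}}\|w\|_{H^s}$ appears and is delicate to control when $\epsilon'\ll\epsilon$; you have not indicated how to handle it. A related omission affects existence: a weak-$*$ limit plus the Cauchy property in $C([0,T];L^2)$ only gives $u\in C_w([0,T];H^s)\cap C([0,T];H^\sigma)$ for $\sigma<s$, and strong continuity of $t\mapsto u(t)$ in $H^s$ requires the separate argument the paper supplies (weak continuity combined with $\limsup_{t\to t_0}\|u(t)\|_{H^s}\leq\|u(t_0)\|_{H^s}$ from the energy inequality).

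A second, lesser issue is that the decisive Lemma \ref{L4.11} is left as a sketch with unspecified exponents $\theta(\alpha)$, $\sigma(\alpha)$ ``chosen by balancing''; since the threshold $s_\alpha=\frac74-\frac\alpha4$ is read off from exactly these exponents, this cannot be deferred. The paper's concrete route is: the decay bound $\|D_x^{(\alpha-1)/2}U_\alpha(t)\phi\|_{L^\infty}\lesssim |t|^{-1}\|\phi\|_{L^1}$ (integrate the Schr\"odinger-type oscillation in $\eta$ explicitly, then apply the Van der Corput-type Lemma 2.7 of \cite{KPV1991} to the remaining one-dimensional integral with phase $|\xi|^{\alpha}\xi$), the resulting admissible-pair Strichartz estimate with gain $D_x^{\frac1{2q}(\alpha-1)}$, the $L^2_TL^\infty$ bound of Corollary \ref{C4.10} with loss $\frac14(1-\alpha)+\delta$, and finally slicing $[0,T]$ into $\Lambda$ subintervals of length $T/\Lambda$ at spatial frequency $\Lambda$, which costs $\Lambda^{1/2}$, i.e.\ half a derivative. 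Your sketch is compatible with this scheme but does not establish it, and you should at least verify that the anisotropic phase indeed yields the full $|t|^{-1}$ decay after the $\eta$-integration, since that is where the two-dimensionality enters.
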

In the previous theorem $W_{x,y}^{1,\infty}(\mathbb R^2)$ denotes the Sobolev space given by the closure of the Schwartz functions under the norm
\[\|f\|_{W_{x,y}^{1,\infty}(\mathbb R^2)}:=\|f\|_{L_{xy}^{\infty}(\mathbb R^2)}+\|\nabla f\|_{L_{xy}^{\infty}(\mathbb R^2)}\, ,\]

where
$$\|\nabla f\|_{L^\infty_{xy}(\mathbb R^2)}\equiv \|\partial_x f\|_{L^\infty_{xy}(\mathbb R^2)}+\|\partial_y f\|_{L^\infty_{xy}(\mathbb R^2)}.$$

It is important to point out that the existence of solutions of the IVP \eqref{BO}, as it was shown for the unidimensional BO equation by Molinet, Saut and Tzvetkov in \cite{MST2001}, can not be proved by an iteration scheme based on the Duhamel's formula. By this reason we will use the Bona-Smith argument of regularization of the initial datum in order to have a set of approximate solutions, to which we apply compactness arguments to obtain the solution when one passes to the limit.\\

An essential part in the proof of the existence of solutions to the IVP \eqref{BO} is to ensure  a common time interval in which all approximate solutions are defined. This is achieved with an a priori estimate of $\|u\|_{L_T^1W_{x,y}^{1,\infty}(\mathbb R^2)}$ for sufficiently smooth solutions of the fractional two-dimensional BO equation (see Lemma \ref{ESN}), which is a consequence of the Strichartz type estimate contained in Lemma \ref{L4.11}.\\

This article is organized as follows: section 2 is devoted to the proof of the impossibility of solving the IVP \eqref{BO} by an iterative method on the integral equation corresponding to the fractional two-dimensional BO equation; in section 3 we prove the Strichartz estimate (Lemma \ref{L4.11}), which is the main ingredient in the proof of Theorem \ref{T4.1}. In section 4, using a Kato-Ponce's commutator inequality, we establish an energy estimate for enough smooth solutions of the BO equation. In section 5, based on Lemma \ref{L4.11}, we deduce an a priori estimate for the norm $\|u\|_{L_T^1W_{xy}^{1,\infty}(\mathbb R^2)}$, where $u$ is a smooth solution of the BO equation. In the last section (section 6), we prove Theorem \ref{T4.1}; this section is divided into four subsections, corresponding, respectively, to a priori estimates, uniqueness, existence and continuous dependence on the initial data.\\

Throughout the paper the letter $C$ will denote diverse constants, which may change from line to line, and whose dependence on certain parameters is clearly established in all cases.\\

Finally, let us explain some used notation. We will denote the Fourier transform and its inverse by the symbols $\text{}^{\wedge}$ and $\text{}^\vee$, respectively.\\

For $f: \mathbb R^2\times[0,T]\to\mathbb R$ (or $\mathbb C$) we have
$$\|f \|_{L^p_{T}L^q_{xy}}:=\left( \int_0^T \left(   \int_{\mathbb R^2} |f(x,y,t)|^q dxdy  \right)^{p/q} dt \right )^{1/p}.$$
When $p=\infty$ or $q=\infty$ we must do the obvious changes with \textit{essup}. In general, for a certain Banach space $X$ and $f:[0,T]\to X$, the notation $\|f\|_{L^p_T (X)}$ means
$$\left(\int_0^T \|f(t)\|^p_X dt \right)^{1/p}.$$

The space $\bigcap_{s\in\mathbb R}H^s(\mathbb R^2)$ is denoted by $H^\infty(\mathbb R^2)$.\\

For variable expressions $A$ and $B$ the notation $A\lesssim B$  and the notation $A\gtrsim B$ mean that there exists a  universal positive constant $C$ such that $A\leq CB$ and $A\geq CB$, respectively, and the notation $A\sim B$ means that there exist universal positive constants $c$ and $C$ such that $cA\leq B\leq CA$.

\section{Picard iterative scheme on the Duhamel's formula does not work}

In this section, inspired in the work \cite{MST2001}, we prove that the IVP \eqref{BO} can not be solved by a Picard iterative scheme based on the Duhamel's formula. In other words, if
\begin{align}
u(t)=U_{\alpha}(t)\phi-\int_0^tU_{\alpha}(t-t')(u(t')u_x(t'))dt'\label{DF}
\end{align}
is the integral equation corresponding to the IVP \eqref{BO} with initial datum $\phi$, where
\begin{align}
(U_{\alpha}(t)\phi)\widehat{\;}(\xi,\eta):=e^{-it(|\xi|^{\alpha}\xi+\sgn(\xi)\eta^2)}\widehat{\phi}(\xi,\eta)\,,\label{Grupo}
\end{align}
the following assertion holds.

\begin{proposition}\label{L2.1} Let $s\in\mathbb R$ and $T>0$. Then there does not exist a space $X_T$ continuously embedded in $C([0,T]; H^s(\mathbb R^2))$ such that there exists $C>0$ with
\begin{align}
\|U_{\alpha}(\cdot_t)\phi\|_{X_T}\leq C\|\phi\|_{H^s(\mathbb R^2)}\,,\quad \phi\in H^s(\mathbb R^2)\,,\label{PL}
\end{align}
and
\begin{align}
\left\|\int_0^{\cdot_t}U_{\alpha}(\cdot_t-t')(u(t')u_x(t'))dt'\right\|_{X_T}\leq C\|u\|_{X_T}^2\,,\quad u\in X_T\,.\label{PI}
\end{align}
\end{proposition}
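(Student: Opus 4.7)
The plan is the Molinet-Saut-Tzvetkov contradiction from \cite{MST2001}. Assuming such a space $X_T$ exists, I first extract a bilinear smoothing estimate from \eqref{PL}--\eqref{PI}, and then disprove it using a one-parameter family of data concentrated near a resonant frequency configuration. Concretely, substituting $u(t)=U_\alpha(t)\phi$ into \eqref{PI}, using \eqref{PL} and the continuous embedding $X_T\hookrightarrow C([0,T];H^s(\mathbb R^2))$, I obtain
\[
\left\|\int_0^t U_\alpha(t-t')\,\partial_x\bigl((U_\alpha(t')\phi)^2\bigr)\,dt'\right\|_{H^s(\mathbb R^2)} \lesssim \|\phi\|_{H^s(\mathbb R^2)}^2,\qquad t\in[0,T],
\]
for every $\phi\in H^s(\mathbb R^2)$.

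Setting $p(\xi,\eta):=|\xi|^\alpha\xi+\sgn(\xi)\eta^2$ and the resonance function $\Omega(\xi,\eta,\xi_1,\eta_1):=p(\xi,\eta)-p(\xi_1,\eta_1)-p(\xi-\xi_1,\eta-\eta_1)$, an explicit computation of the inner time integral reduces the Fourier transform of the bilinear expression above to
\[
i\xi\,e^{-itp(\xi,\eta)}\int_{\mathbb R^2}\frac{e^{it\Omega}-1}{i\Omega}\,\widehat\phi(\xi_1,\eta_1)\,\widehat\phi(\xi-\xi_1,\eta-\eta_1)\,d\xi_1\,d\eta_1.
\]
To force the contradiction I will pick $\widehat{\phi_N}=\gamma_N(\chi_{D_1^N}+\chi_{D_2^N})$, where $D_1^N, D_2^N$ are thin rectangles in the frequency plane located near two carefully chosen points $(\xi^{(j)},\eta^{(j)})$, and $\gamma_N$ is chosen so that $\|\phi_N\|_{H^s(\mathbb R^2)}\sim 1$. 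The geometry will be arranged so that, whenever $(\xi_1,\eta_1)\in D_1^N$ and $(\xi-\xi_1,\eta-\eta_1)\in D_2^N$, the resonance $\Omega$ remains uniformly bounded, so that $(e^{it\Omega}-1)/(i\Omega)\sim t$, while the derivative factor $\xi$ is as large as possible and the rectangles carry enough volume to make the integral sizeable. A natural configuration takes $\xi_1,\xi_2$ of opposite signs, so that $|\xi|^{\alpha+1}\ll |\xi_j|^{\alpha+1}$, and then uses the antisymmetric $\sgn(\xi)\eta^2$ piece to cancel the residual mismatch through an appropriate choice of $\eta_1,\eta_2$.

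The main obstacle is the frequency design itself: the resonance must stay small throughout $D_1^N\times D_2^N$ while the derivative loss beats the $H^s$-weight normalization. The 2D symbol mixes the fractional-derivative term $|\xi|^\alpha\xi$ with the antisymmetric $\sgn(\xi)\eta^2$, which both provides extra flexibility to tune $\Omega$ and imposes sign restrictions on $\xi$ when one tries to enlarge the supports. Calibrating $(\xi^{(j)},\eta^{(j)})$, $|D_j^N|$, and $\gamma_N$ so that the weighted $L^2_{\xi\eta}$ norm of the displayed bilinear kernel gains a positive power of $N$ while $\|\phi_N\|_{H^s}$ stays of order one is the core computation; once such a configuration is produced, letting $N\to\infty$ contradicts the bilinear estimate and completes the proof.
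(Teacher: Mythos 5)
Your opening steps coincide with the paper's: substituting $u=U_{\alpha}(\cdot_t)\phi$ into \eqref{PI}, invoking \eqref{PL} and the embedding $X_T\hookrightarrow C([0,T];H^s(\mathbb R^2))$ to get the bilinear estimate, and rewriting the Duhamel term on the Fourier side with the factor $(e^{it\Omega}-1)/(i\Omega)$ is exactly the paper's reduction. The gap is everything after that: the counterexample, which is the entire substance of the proof, is not constructed. You defer "calibrating $(\xi^{(j)},\eta^{(j)})$, $|D_j^N|$ and $\gamma_N$" to a "core computation" that is never performed, and the configuration you sketch to guide that computation points in a direction that cannot work for every $s$.

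Specifically, you propose taking $\xi_1$ and $\xi-\xi_1$ of opposite signs with $|\xi|^{1+\alpha}\ll|\xi_j|^{1+\alpha}$, i.e.\ a high--high interaction whose output sits at a much lower frequency, with the $\sgn(\xi)\eta^2$ part of the symbol used to kill the resonance. If both input blocks live at $|\xi_j|\sim N$, then normalizing $\|\phi_N\|_{H^s}\sim 1$ forces $\gamma_N\sim N^{-s}|D_j^N|^{-1/2}$ on \emph{each} block, so the bilinear output carries $N^{-2s}$, while the output $H^s$-weight together with the derivative factor contributes only $|\xi|^{s+1}=o(N^{s+1})$. For large positive $s$ the left-hand side then tends to zero regardless of how the resonance and the measures are tuned, so no contradiction is reached; since the proposition is asserted for every $s\in\mathbb R$, this configuration cannot carry the argument. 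The paper instead uses a low--high interaction: $\widehat{\phi_N}=\beta^{-1/2}\chi_{I_1}+\beta^{-1/2}N^{-s}\chi_{I_2}$ with $I_1=[\beta/2,\beta]\times[0,\beta^{1/4}]$, $I_2=[N,N+\beta]\times[0,\beta^{1/4}]$ and $\beta:=N^{-\alpha-\epsilon}$. The relevant output lives at frequency $\approx N$, so the weight $N^{s}$ cancels the single factor $N^{-s}$ and the computation becomes independent of $s$; the resonance is small not through a sign cancellation in $\eta$ but because $\psi=\xi_1\bigl[(1+\alpha)\xi^{\alpha}-\xi_1^{\alpha}-o(\xi_1)/\xi_1\bigr]+2\eta_1(\eta-\eta_1)\sim\beta N^{\alpha}=N^{-\epsilon}$ when $\xi_1\in[\beta/2,\beta]$ is tiny; and the derivative factor is the full $\xi\sim N$. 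This yields the lower bound $\|f_3(\cdot,\cdot,t)\|_{H^s}^2\gtrsim N^{2-\frac74\alpha-\frac{15}{4}\epsilon}\to\infty$ and closes the proof. You would need to redesign your frequency configuration along these lines and then actually carry out the measure and resonance estimates.
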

Let us observe that the conditions \eqref{PL} and \eqref{PI} are necessary to apply the Picard iterative scheme in the integral equation \eqref{DF}.

\begin{proof} Reasoning as in \cite{MST2001} we suppose that there exists a space $X_T$ such that \eqref{PL} and \eqref{PI} take place. Let us define in \eqref{PI} $u:=U_{\alpha}(\cdot_t)\phi$. Then
$$\left\|\int_0^{\cdot_t} U_{\alpha}(\cdot_t-t')[(U_{\alpha}(t')\phi)(U_{\alpha}(t')\phi_x)]dt'\right\|_{X_T}\leq C\|U_{\alpha}(\cdot_t)\phi\|_{X_T}^2.$$
In this way, using \eqref{PL} and the fact that $X_T\hookrightarrow C([0,T];H^s(\mathbb R))$, we would conclude that for each $t\in[0,T]$
\begin{align}
\left\|\int_0^t U_{\alpha}(t-t')[(U_{\alpha}(t')\phi)(U_{\alpha}(t')\phi_x)]dt'\right\|_{H^s(\mathbb R^2)}\lesssim \|\phi\|^2_{H^s(\mathbb R^2)},\quad \forall \phi\in H^s(\mathbb R^2).\label{oct04_04}
\end{align}
We will show that \eqref{oct04_04} is not true, exhibiting a bounded sequence $\{\phi_N\}_{N\in\mathbb N}$ in $H^s(\mathbb R^2)$, such that for $t\in[T/2,T]$,
$$\lim_{N\to\infty} \left\|\int_0^t U_{\alpha}(t-t')[(U_{\alpha}(t')\phi_N)(U_{\alpha}(t'){\phi_{N}}_x)]dt' \right\|_{H^s(\mathbb R^2)}=\infty.$$
Let us take $\phi\equiv \phi_N$ defined through the Fourier transform by
$$\widehat \phi(\xi,\eta)=\beta^{-1/2}\chi_{I_1}(\xi,\eta)+\beta^{-1/2}N^{-s }\chi_{I_2}(\xi,\eta),\quad N\gg1,\quad 0<\beta\ll1,$$
where $I_1:=[\beta/2,\beta]\times[0,\beta^{1/4}]$ and $I_2:=[N,N+\beta]\times[0,\beta^{1/4}]$, $\beta$ to be precised later, and $\chi_A$ denotes the characteristic function of the set A. Let us note that
\begin{align}
\notag\|\phi\|_{H^s}^2&=\int_{\mathbb R^2}(1+\xi^2+\eta^2)^s |\widehat\phi(\xi,\eta)|^2d\xi d\eta=\int_{I_1}(1+\xi^2+\eta^2)^s \beta^{-1}d\xi d\eta+\int_{I_2}(1+\xi^2+\eta^2)^s \beta^{-1}N^{-2s}d\xi d\eta\\
&\sim \int_{\beta/2}^\beta \int_0^{\beta^{1/4}}\beta^{-1}d\eta d\xi+\int_N^{N+\beta}\int_0^{\beta^{1/4}}N^{2s}\beta^{-1} N^{-2s} d\xi d\eta=\dfrac{\beta^{1/4}}2+\beta^{1/4}=\dfrac32\beta^{1/4}\sim\beta^{1/4}.\label{oct04_05}
\end{align}
On the other hand, if we define $\rho(\xi,\eta):=-(|\xi|^{\alpha}\xi+\sgn(\xi)\eta^2)$, then it can be easily seen that 
\begin{align*}
&\int_0^t U_{\alpha}(t-t')[(U_{\alpha}(t')\phi)(U_{\alpha}(t')\phi_x)]dt'\\
&=C\int_{\mathbb R^2}e^{ix\xi+iy\eta}e^{it\rho(\xi,\eta)}\xi\left[\int_{\mathbb R^2}\widehat\phi(\xi_1,\eta_1)\widehat \phi(\xi-\xi_1,\eta-\eta_1)\dfrac{e^{it(\rho(\xi_1,\eta_1)+\rho(\xi-\xi_1,\eta-\eta_1)-\rho(\xi,\eta))}-1}{\rho(\xi_1,\eta_1)+\rho(\xi-\xi_1,\eta-\eta_1)-\rho(\xi,\eta)}d\xi_1 d\eta_1\right]d\xi d\eta
\end{align*}
Let us consider the following four sets:
\begin{align*}
A_{11}\equiv A_{11}(\xi,\eta):=\{(\xi_1,\eta_1)\in I_1: (\xi-\xi_1,\eta-\eta_1)\in I_1\},\quad & A_{22}\equiv A_{22}(\xi,\eta):=\{(\xi_1,\eta_1)\in I_2: (\xi-\xi_1,\eta-\eta_1)\in I_2\},\\
A_{12}\equiv A_{12}(\xi,\eta):=\{(\xi_1,\eta_1)\in I_1: (\xi-\xi_1,\eta-\eta_1)\in I_2\},\quad & A_{21}\equiv A_{21}(\xi,\eta):=\{(\xi_1,\eta_1)\in I_2: (\xi-\xi_1,\eta-\eta_1)\in I_1\}.
\end{align*}
To obtain $\widehat \phi(\xi_1,\eta_1)\widehat \phi(\xi-\xi_1,\eta-\eta_1)\neq 0$, it is required that $(\xi_1,\eta_1)$ belongs to one of those four sets. Therefore, using the notation $\psi\equiv \psi(\xi,\eta,\xi_1,\eta_1):=\rho(\xi_1,\eta_1)+\rho(\xi-\xi_1,\eta-\eta_1)-\rho(\xi,\eta)$, we have that
\begin{align*}
\int_0^t U_{\alpha}(t-t')[(U_{\alpha}(t')\phi)(U_{\alpha}(t')\phi_x)]dt'=&C\int_{\mathbb R^2}e^{ix\xi+iy\eta}e^{it\rho(\xi,\eta)}\xi\left[\int_{A_{11}}\beta^{-1}\dfrac{e^{it\psi}-1}\psi d\xi_1 d\eta_1\right.\\
&\left.+\int_{A_{22}}\beta^{-1}N^{-2s}\dfrac{e^{it\psi}-1}\psi d\xi_1 d\eta_1+\int_{A_{12}\cup A_{21}}\beta^{-1}N^{-s}\dfrac{e^{it\psi}-1}\psi d\xi_1 d\eta_1\right] d\xi d\eta.
\end{align*}
Let us define
\begin{align*}
f_1(x,y,t):=&\dfrac C\beta \int_{\mathbb R^2} \xi e^{ix\xi+iy\eta}e^{it\rho(\xi,\eta)}\left[\int_{A_{11}}\dfrac{e^{it\psi}-1}\psi d\xi_1 d\eta_1\right] d\xi d\eta,\\
f_2(x,y,t):=&\dfrac C{\beta N^{2s}} \int_{\mathbb R^2} \xi e^{ix\xi+iy\eta}e^{it\rho(\xi,\eta)}\left[\int_{A_{22}}\dfrac{e^{it\psi}-1}\psi d\xi_1 d\eta_1\right] d\xi d\eta,\\
f_3(x,y,t):=&\dfrac C{\beta N^{s}} \int_{\mathbb R^2} \xi e^{ix\xi+iy\eta}e^{it\rho(\xi,\eta)}\left[\int_{A_{12}\cup A_{21}}\dfrac{e^{it\psi}-1}\psi d\xi_1 d\eta_1\right] d\xi d\eta.
\end{align*}
Then
$$\left\{\int_0^t U_{\alpha}(t-t')[(U_{\alpha}(t')\phi)(U_{\alpha}(t')\phi_x)]dt'\right\}(x,y)=f_1(x,y,t)+f_2(x,y,t)+f_3(x,y,t).$$
Since the supports of $[f_1(\cdot,\cdot,t)]^{\wedge}$, $[f_2(\cdot,\cdot,t)]^{\wedge}$ and $[f_3(\cdot,\cdot,t)]^{\wedge}$ are mutually disjoint, we have that
\begin{align}
\left\|\int_0^t U_{\alpha}(t-t')[(U_{\alpha}(t')\phi)(U_{\alpha}(t')\phi_x)]dt'\right\|_{H^s(\mathbb R^2)}\geq \|f_3(\cdot,\cdot,t)\|_{H^s(\mathbb R^2)}.\label{oct04_06}
\end{align}
By making the change of variables $\xi_1':=\xi-\xi_1$, $\eta_1':=\eta-\eta_1$ and taking into account that $\psi(\xi,\eta,\xi-\xi_1',\eta-\eta_1')=\psi(\xi,\eta,\xi_1',\eta_1')$, it is easy to see that
$$\int_{A_{21}} \dfrac{e^{i\psi(\xi,\eta,\xi_1,\eta_1)}-1}{\psi(\xi,\eta,\xi_1,\eta_1)}d\xi_1 d\eta_1=\int_{A_{12}} \dfrac{e^{i\psi(\xi,\eta,\xi_1',\eta_1')}-1}{\psi(\xi,\eta,\xi_1',\eta_1')}d\xi_1' d\eta_1'.$$
Therefore
$$f_3(x,y,t)=\dfrac{2C}{\beta N^s}\int_{\mathbb R^2} \xi e^{ix\xi+iy\eta+it\rho(\xi,\eta)}\left[\int_{A_{12}}\dfrac{e^{it\psi}-1}\psi d\xi_1 d\eta_1 \right] d\xi d\eta,$$
and
$$\|f_3(\cdot,\cdot,t)\|^2_{H^s(\mathbb R^2)}=\int_{\supp\widehat{f_3(\cdot,\cdot,t)}}(1+\xi^2+\eta^2)^s\dfrac {C\xi^2}{\beta^2 N^{2s}}\left| \int_{A_{12}}\dfrac{e^{it\psi}-1}\psi d\xi_1d\eta_1\right|^2 d\xi d\eta.$$
On the other hand, it can be seen that if $(\xi,\eta)\in\supp \widehat{f_3(\cdot,\cdot,t)}$, then $(\xi,\eta)\in[N+\beta/2,N+2\beta]\times[0,2\beta^{1/4}]$. In consequence 
$$\|f_3(\cdot,\cdot,t)\|^2_{H^s(\mathbb R^2)}\sim \dfrac{CN^{2s}N^2}{\beta^2 N^{2s}}\int_{\supp\widehat{f_3(\cdot,\cdot,t)}} \left| \int_{A_{12}}\dfrac{e^{it\psi}-1}\psi d\xi_1d\eta_1\right|^2 d\xi d\eta.$$
Let us observe that, for $(\xi,\eta)\in \supp \widehat{f_3(\cdot,\cdot,t)}$, if $(\xi_1,\eta_1)\in A_{12}(\xi,\eta)$, and we take $\beta$ in such a way that $\beta N^{\alpha}=N^{-\epsilon}$, with $0<\epsilon<\alpha$, (i.e. $\beta:=N^{-\alpha-\epsilon}$), it follows that
\begin{align*}
\psi(\xi,&\eta,\xi_1,\eta_1)=\rho(\xi_1,\eta_1)+\rho(\xi-\xi_1,\eta-\eta_1)-\rho(\xi,\eta)=-\xi_1^{1+\alpha}-(\xi-\xi_1)^{1+\alpha}+\xi^{1+\alpha}+2\eta_1(\eta-\eta_1)\\
&=-\xi_1^{1+\alpha}-[\xi^{1+\alpha}-(1+\alpha)\xi^{\alpha}\xi_1+o(\xi_1)]+\xi^{1+\alpha}+2\eta_1(\eta-\eta_1)\quad(\text{where}\;\lim_{\xi_1\to0}\dfrac{o(\xi_1)}{\xi_1}=0)\\
&=(1+\alpha)\xi^{\alpha}\xi_1-\xi_1^{1+\alpha}-o(\xi_1)+2\eta_1(\eta-\eta_1)=\xi_1\left[(1+\alpha)\xi^{\alpha}-\xi_1^{\alpha}-\dfrac{o(\xi_1)}{\xi_1}\right]+2\eta_1(\eta-\eta_1)\\
&\sim\beta N^{\alpha},
\end{align*}
since $2\eta_1(\eta-\eta_1)\lesssim \beta^{\frac12}=\frac1{N^{\frac12\alpha+\frac12\epsilon}}\ll N^{-\epsilon}=\beta N^{\alpha}$.
Taking into account that $\dfrac{1-\cos\theta}\theta\sim\theta$ for $0<\theta\ll1$, then, if $t\in[T/2,T]$, we have that
\begin{align*}
\|f_3(\cdot,\cdot,t)\|^2_{H^s}&\gtrsim Ct^2\dfrac{N^2}{\beta^2}\int_{\supp \widehat{f_3(\cdot,\cdot,t)}} \left|\re \int_{A_{12}} \dfrac{e^{it\psi}-1}{t\psi}d\xi_1 d\eta_1\right|^2 d\xi d\eta\\
&\gtrsim \dfrac{Ct^2N^2}{\beta^2} \int_{\supp \widehat{f_3(\cdot,\cdot,t)}} \left|\int_{A_{12}}\dfrac{1-\cos(t\psi)}{t\psi}d\xi_1 d\eta_1\right|^2 d\xi d\eta\\
&\gtrsim \dfrac{CT^2N^2}{\beta^2} \int_{\supp \widehat{f_3(\cdot,\cdot,t)}} \beta^2 N^{2\alpha} \left|\int_{A_{12}} d\xi_1 d\eta_1 \right|^2 d\xi d\eta.
\end{align*}

Let us observe that
$$\meas\left\{ (\xi,\eta)\in\supp \widehat{f_3(\cdot,\cdot,t)} :\meas A_{12}(\xi,\eta)  \sim\beta^{5/4} \right\} \sim \beta^{5/4}.$$
Hence, for $t\in[T/2,T]$,
\begin{align}
\|f_3(\cdot,\cdot,t)\|^2_{H^s(\mathbb R^2)} \gtrsim N^{2+2\alpha}(\beta^{5/4})^2 \beta^{5/4}=N^{2+2\alpha} \beta^{15/4}=N^{2+2\alpha}(N^{-\alpha-\epsilon})^{15/4}=N^{2-\frac74\alpha-15\epsilon/4}\to\infty,\label{oct04_07}
\end{align}

as $N\to\infty$, if $\dfrac{15\epsilon}4<2-\dfrac74\alpha$, i.e., if $\epsilon<\dfrac8{15}-\dfrac7{15}\alpha$.

In this manner for $\beta:=N^{-\alpha-\epsilon}$ with $0<\epsilon<\min\{\alpha, \dfrac8{15}-\dfrac7{15}\alpha\}$, from \eqref{oct04_05}, \eqref{oct04_06} and \eqref{oct04_07} it follows that the sequence $\{\phi_N\}_{N}$ is bounded in $H^s(\mathbb R^2)$ and that
$$\lim_{N\to\infty} \left\|\int_0^t U_{\alpha}(t-t')[(U_{\alpha}(t')\phi_N)(U_{\alpha}(t'){\phi_N}_x)] dt'\right\|_{H^s(\mathbb R^2)}=\infty,$$
for $t\in[T/2,T]$.
\end{proof}

\section{Linear Estimates}

The main objective of this section is to establish a Strichartz type estimate for the spatial derivatives of the solutions of the non homogeneous linear equation
\begin{align}
u_t+D_x^{\alpha} u_x +\mathcal Hu_{yy}=F\,,\label{LNH}
\end{align}
from the classical Strichartz type estimate of the group $\{U_{\alpha}(t)\}$, defined by \eqref{Grupo}, associated to the linear part of the fractional two-dimensional BO equation. The Strichartz estimate for the spatial derivatives of the solutions of \eqref{LNH} will allow us to control the norm $\|\nabla u\|_{L^1_T L^\infty_{xy}}$ for sufficiently smooth solutions $u$ of the fractional two-dimensional BO equation.\\

We have the following decay estimate in time for the group.
\begin{lemma}\label{DT} For $\alpha\in(0,1]$, it holds that
\begin{align}
\|D_x^{\frac{\alpha-1}2}U_{\alpha}(t)\phi\|_{L^{\infty}(\mathbb R^2)}\leq C|t|^{-1}\|\phi\|_{L^1(\mathbb R^2)}.\label{DT1}
\end{align}
\end{lemma}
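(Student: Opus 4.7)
The plan is to reduce the estimate to a pointwise bound on the convolution kernel, namely $|K_t(x,y)|\lesssim |t|^{-1}$ uniformly in $(x,y)\in\mathbb R^2$; applying this with Young's inequality immediately yields \eqref{DT1}. Explicitly,
\begin{align*}
K_t(x,y) = (2\pi)^{-2}\int_{\mathbb R^2}|\xi|^{(\alpha-1)/2}e^{i(x\xi+y\eta)-it(|\xi|^{\alpha}\xi+\sgn(\xi)\eta^2)}\,d\xi\,d\eta.
\end{align*}
The crucial structural property is that the phase is quadratic in $\eta$ with a $\xi$-dependent coefficient $-t\sgn(\xi)$, so the $\eta$- and $\xi$-integrations decouple; the plan is to gain $|t|^{-1/2}$ from each of them.

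For the $\eta$-integral (with $\xi\neq 0$ fixed), completing the square in the phase $y\eta-t\sgn(\xi)\eta^2$ gives a classical Fresnel integral equal to $C|t|^{-1/2}e^{iy^2/(4t\sgn(\xi))}$ times a unimodular factor, which extracts the first $|t|^{-1/2}$. Splitting the remaining $\xi$-domain into $\xi>0$ and $\xi<0$ (the two pieces being complex conjugates), the task reduces to showing
$$\sup_{x\in\mathbb R}\left|\int_0^\infty e^{i(x\xi-t\xi^{\alpha+1})}\xi^{(\alpha-1)/2}\,d\xi\right|\lesssim |t|^{-1/2}.$$
I would use the scaling substitution $\xi=|t|^{-1/(\alpha+1)}\tilde\xi$; the Jacobian and the amplitude together produce the prefactor $|t|^{-(\alpha+1)/(2(\alpha+1))}=|t|^{-1/2}$, reducing matters to the dimensionless uniform bound
$$\sup_{a\in\mathbb R}\left|\int_0^\infty e^{i(a\tilde\xi-\tilde\xi^{\alpha+1})}\tilde\xi^{(\alpha-1)/2}\,d\tilde\xi\right|<\infty,\qquad a:=x|t|^{-1/(\alpha+1)}.$$
For $a\leq 0$ the phase has no stationary point and integration by parts on $\{\tilde\xi\geq 1\}$, together with integrability of the amplitude at the origin (valid since $\alpha>0$), gives the bound. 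For $a>0$ stationary phase at the unique critical point $\tilde\xi_0=(a/(\alpha+1))^{1/\alpha}$ yields a leading contribution of order $\tilde\xi_0^{(\alpha-1)/2}/\sqrt{|\phi''(\tilde\xi_0)|}$, and since $|\phi''(\tilde\xi_0)|\sim\tilde\xi_0^{\alpha-1}$ these two factors cancel exactly, which is precisely the purpose of the weight $D_x^{(\alpha-1)/2}$ in the statement.

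The main obstacle I anticipate is executing the stationary-phase/van der Corput analysis when $\alpha<1$, because the amplitude $\tilde\xi^{(\alpha-1)/2}$ is singular at the origin and not smooth, so a single application of a smooth oscillatory-integral lemma on the whole half-line is not available. I would split $(0,\infty)$ into a neighborhood of $\tilde\xi_0$ (where stationary phase with remainder is applied directly) and the complementary regions (where the lower bound $|\phi'(\tilde\xi)|\gtrsim|a-(\alpha+1)\tilde\xi^{\alpha}|$ drives the integration by parts), handling the endpoint contribution near $\tilde\xi=0$ by local integrability. A careful accounting of these pieces, with the scale-matched weight, yields the uniform bound on the dimensionless integral and therefore the pointwise kernel estimate $|K_t(x,y)|\lesssim |t|^{-1}$, whence the lemma.
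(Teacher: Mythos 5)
Your proposal follows essentially the same route as the paper: reduce to a pointwise kernel bound plus Young's inequality, evaluate the $\eta$-integral as a Fresnel integral to gain $|t|^{-1/2}$, and rescale $\xi$ by $|t|^{-1/(1+\alpha)}$ so that the weight $|\xi|^{(\alpha-1)/2}$ supplies the remaining $|t|^{-1/2}$ and reduces matters to the dimensionless uniform bound $\sup_a\bigl|\int_0^\infty e^{i(a\tilde\xi-\tilde\xi^{\alpha+1})}\tilde\xi^{(\alpha-1)/2}\,d\tilde\xi\bigr|<\infty$. The only difference is that the paper disposes of this last bound by citing Lemma 2.7 of \cite{KPV1991}, whose hypothesis is precisely that the amplitude equals $|\phi''|^{1/2}$ for the phase $\phi(\xi)=-|\xi|^{\alpha}\xi$, whereas you prove it directly by stationary phase; your sketch, including the key cancellation $\tilde\xi_0^{(\alpha-1)/2}|\phi''(\tilde\xi_0)|^{-1/2}\sim 1$ and the separate treatment of the singular amplitude near the origin, is correct.
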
 
\begin{proof}
Without loss of generality we can suppose that $t>0$. Let us observe that
\begin{align}
(U_{\alpha}(t)\phi)(x,y)=(I_{\alpha}(\cdot,\cdot,t)\ast \phi)(x,y), \notag
\end{align}
where
$$I_{\alpha}(x,y,t)=(2\pi)^{-1}\int_{\mathbb R^2}e^{-it(|\xi|^{\alpha}\xi+\sgn(\xi)\eta^2)}e^{i(x\xi+y\eta)} d\xi d\eta\;.$$
Now we will analyze the decay properties of the oscillatory integral
\begin{align}
D_x^{\frac{\alpha-1}2}I_{\alpha}(x,y,t)=(2\pi)^{-1}\int_{\mathbb R_{\xi}}|\xi|^{\frac{\alpha-1}2} \left(\int_{\mathbb R_{\eta}}e^{-it\sgn (\xi)\eta^2}e^{iy\eta}d\eta \right)e^{i(x\xi-t|\xi|^{\alpha}\xi)}d\xi\,.\label{IO}
\end{align}
Performing in the inner integral the change of variables $\eta':=t^{\frac12}\eta$ we obtain
\begin{align*}
D_x^{\frac{\alpha-1}2}I_{\alpha}(x,y,t)&=\frac{C}{t^{\frac12}}\int_{\mathbb R_{\xi}}|\xi|^{\frac{\alpha-1}2} \left(\int_{\mathbb R_{\eta'}}e^{-i\sgn (\xi)\eta'^2}e^{i\frac{y}{t^{1/2}}\eta'}d\eta' \right)e^{i(x\xi-t|\xi|^{\alpha}\xi)}d\xi.
\end{align*}
Taking into account that
$$(e^{i\delta\eta'^2})^{\vee}{\;}(x)=C|\delta|^{-\frac12}e^{i\sgn(\delta)\pi/4}e^{i\delta^{-1}x^2}\quad\text{for}\; \delta\in\mathbb R-\{0\}\,,$$
we have that
$$\int_{\mathbb R_{\eta'}}e^{-i\sgn (\xi)\eta'^2}e^{i\frac{y}{t^{1/2}}\eta'}d\eta'=Ce^{-i\sgn(\xi)\pi/4}e^{-i\sgn(\xi)t^{-1}y^2}$$
and, in consequence
$$D_x^{\frac{\alpha-1}2}I_{\alpha}(x,y,t)=\frac{C}{t^{\frac12}}\int_{\mathbb R_{\xi}}|\xi|^{\frac{\alpha-1}2}e^{-i\sgn(\xi)\pi/4}e^{-i\sgn(\xi)t^{-1}y^2}e^{i(x\xi-t|\xi|^{\alpha}\xi)}d\xi\,.$$
Making in the last integral the change of variable $\xi':=t^{\frac1{1+\alpha}}\xi$ we obtain
$$D_x^{\frac{\alpha-1}2}I_{\alpha}(x,y,t)=\frac{C}{t}\int_{\mathbb R_{\xi'}}|\xi'|^{\frac{\alpha-1}2}e^{-i\sgn(\xi')\pi/4}e^{-i\sgn(\xi')t^{-1}y^2}e^{i\xi' \left(\frac{x}{t^{1/(1+\alpha)}} \right)}e^{-i|\xi'|^{\alpha}\xi'}d\xi'\,.$$
Let us define for $\lambda\in\mathbb R$,
$$J(\lambda):=\int_{\mathbb R^{+}}|\xi|^{\frac{\alpha-1}2}e^{i\xi\lambda}e^{-i|\xi|^{\alpha}\xi}d\xi\,.$$
By applying Lemma 2.7 in \cite{KPV1991} with $\phi(\xi):=-|\xi|^{\alpha}\xi$ we may conclude that there exists a positive constant $C$ such that for all $\lambda\in\mathbb R$
$$J(\lambda)\leq C\,.$$
Hence
$$\|D_x^{\frac{\alpha-1}2}I_{\alpha}(\cdot,\cdot,t)\|_{L^{\infty}(\mathbb R^2)}\leq\frac{C}{t}\,$$
and in consequence
$$\|D_x^{\frac{\alpha-1}2}U_{\alpha}(t)\phi\|_{L^{\infty}(\mathbb R^2)}=\|D_x^{\frac{\alpha-1}2}I_{\alpha}(\cdot,\cdot,t)\ast\phi\|_{L^{\infty}(\mathbb R^2)}\leq\frac{C}{t}\|\phi\|_{L^1(\mathbb R^2)}\,.$$
\end{proof}
\begin{Definition} Given $(q,p)\in\mathbb R^2$ we will say that $(q,p)$ is an admissible pair for the fractional two-dimensional BO equation if $2\leq q,p\leq\infty$, $q>2$ and
$$\frac1{q}+\frac1{p}=\frac12\,.$$
\end{Definition}
The following Strichartz's type estimate is a consequence of the decay estimate in time \eqref{DT1}.\\ 

\begin{proposition}\label{SE} Let $\alpha\in(0,1]$. If $(q,p)$ is an admissible pair for the fractional two-dimensional BO equation, then
\begin{align}
\|D_x^{\frac1{2q}(\alpha-1)}U_{\alpha}(\cdot_t)\phi\|_{L_T^qL_{xy}^p}\leq C\|\phi\|_{L_{xy}^2}\,. \label{SE1}
\end{align}
\end{proposition}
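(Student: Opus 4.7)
My plan is to prove the Strichartz estimate \eqref{SE1} by the classical $TT^*$ argument of Ginibre--Velo, starting from the dispersive decay already established in Lemma \ref{DT} and the obvious $L^2$-conservation of the unitary group $U_\alpha(t)$.

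First, I would combine these two endpoint estimates by complex interpolation. The symbol of $U_\alpha(t)$ has modulus one, so $\|U_\alpha(t)\phi\|_{L^2_{xy}}=\|\phi\|_{L^2_{xy}}$; on the other hand, Lemma \ref{DT} gives $\|D_x^{(\alpha-1)/2}U_\alpha(t)\phi\|_{L^\infty_{xy}}\lesssim |t|^{-1}\|\phi\|_{L^1_{xy}}$. Applying Stein's theorem to the analytic family $T_z:=D_x^{z(\alpha-1)/2}U_\alpha(t)$ on the strip $0\le\operatorname{Re} z\le 1$ (with the vertical lines $\operatorname{Re} z=0,1$ controlled because the purely imaginary derivative produces unimodular Fourier multipliers), and choosing $\theta=1-2/p$, I obtain, using the admissibility identity $1-2/p=2/q$, the interpolated dispersive bound
\begin{equation*}
\|D_x^{(\alpha-1)/q}U_\alpha(t)\phi\|_{L^p_{xy}}\le C|t|^{-2/q}\|\phi\|_{L^{p'}_{xy}},\qquad \tfrac{1}{p}+\tfrac{1}{p'}=1.
\end{equation*}

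Second, I would set $T\phi(t):=D_x^{(\alpha-1)/(2q)}U_\alpha(t)\phi$ and compute, using the unitarity of $U_\alpha$ and the fact that the real Fourier multiplier $D_x^{(\alpha-1)/(2q)}$ commutes with $U_\alpha$ and is self-adjoint, that
\begin{equation*}
T^*G=\int U_\alpha(-t)D_x^{(\alpha-1)/(2q)}G(t)\,dt,\qquad TT^*G(s)=\int D_x^{(\alpha-1)/q}U_\alpha(s-t)G(t)\,dt.
\end{equation*}
Inserting the previously interpolated dispersive estimate pointwise in time and then applying Hardy--Littlewood--Sobolev in the single time variable (which is legitimate precisely when $0<2/q<1$, i.e.\ under the admissibility restriction $q>2$) yields
\begin{equation*}
\|TT^*G\|_{L^q_T L^p_{xy}}\le C\|G\|_{L^{q'}_T L^{p'}_{xy}},
\end{equation*}
and the standard duality identity $\|T\|^2=\|TT^*\|$ delivers \eqref{SE1}.

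The mildly technical point is the justification of the complex interpolation in the presence of the Fourier multiplier $D_x^{(\alpha-1)/2}$, which is unbounded on $L^2$; this is handled by Stein's interpolation for the analytic family $T_z$, using that $|\xi|^{i\tau(\alpha-1)/2}$ is unimodular so that $T_{i\tau}$ remains an $L^2$-isometry uniformly in $\tau\in\mathbb R$, while $T_{1+i\tau}$ keeps the dispersive bound of Lemma \ref{DT} with a constant growing at most polynomially in $\tau$. The excluded endpoint $(q,p)=(2,\infty)$ of the HLS step is exactly the one ruled out by the hypothesis $q>2$, while the other endpoint $(q,p)=(\infty,2)$ reduces trivially to the $L^2$-isometry of $U_\alpha(t)$ with no derivative loss.
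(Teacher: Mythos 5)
Your argument is correct and is essentially the proof given in the paper: the authors likewise interpolate the decay bound of Lemma \ref{DT} against the $L^2$-isometry of $U_\alpha(t)$ (citing Proposition 2.3 of \cite{KPV1989} for the analytic-family step you carry out via Stein's theorem), then prove the $TT^*$ bound \eqref{ISE} by Minkowski plus Hardy--Littlewood--Sobolev in time under the same restriction $q>2$, and conclude by the Stein--Tomas duality argument, i.e.\ $\|T\|^2=\|TT^*\|$. Your write-up is, if anything, slightly more explicit about the boundary-line control $|\xi|^{i\tau(\alpha-1)/2}$ in the interpolation, which the paper delegates to the cited reference.
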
 

\begin{proof}
We will denote by $q'$ and $p'$ the conjugated exponents of $q$ and $p$, respectively, i.e. $\frac1{q}+\frac1{q'}=1$ and $\frac1{p}+\frac1{p'}=1$.
We will prove that 
\begin{align}
\left\|\int_{-\infty}^{\infty}D_x^{\frac1{q}(\alpha-1)}U_{\alpha}(t-t')g(\cdot,t')dt' \right\|_{L_t^qL_{xy}^p}\leq C\|g\|_{L_T^{q'}L_{xy}^{p'}}\,.\label{ISE}
\end{align}
Let us recall that for $t\in\mathbb R$
\begin{align}
\|U_{\alpha}(t)\phi\|_{L^2(\mathbb R^2)}=\|\phi\|_{L^2(\mathbb R^2)}\,.\label{G}
\end{align}
By applying an interpolation result, similar to that in Proposition 2.3 in \cite{KPV1989}, from estimates \eqref{DT1} and \eqref{G} we obtain for $\theta\in[0,1]$ that
\begin{align}
\|D_x^{\theta(\frac{\alpha-1}2)}U_{\alpha}(t)\phi\|_{L_{xy}^{\frac2{1-\theta}}}\leq C|t|^{-\theta}\|\phi\|_{L_{xy}^{\frac2{1+\theta}}}\,.\label{INTER}
\end{align}
In this manner for $p:=\frac2{1-\theta}$, with $\theta\in(0,1)$, by Minkowski's integral inequality and estimate \eqref{INTER} we have that
\begin{align}
\notag \Big\| \int_{-\infty}^{\infty}D_x^{\theta (\frac{\alpha-1}2)}U_{\alpha}(t-t')&g(\cdot,t')dt' \Big\|_{L_t^qL_{xy}^p}\\ 
\notag&\leq\left(\int_{-\infty}^{\infty}\left(\int_{-\infty}^{\infty}\frac{C}{|t-t'|^{\theta}}\|g(\cdot,t')\|_{L_{xy}^{p'}}dt'\right)^qdt\right)^{\frac1{q}}\\
&= \left\|\int_{-\infty}^{\infty}\frac{C}{|t-t'|^{\theta}} \|g(\cdot,t')\|_{L_{xy}^{p'}}dt' \right\|_{L_t^q}=\left\|\int_{-\infty}^{\infty}\frac{C}{|t-t'|^{1-\beta}}\|g(\cdot,t')\|_{L_{xy}^{p'}}dt' \right\|_{L_t^q}\,,\label{INTER1}
\end{align}
where $\beta:=1-\theta\in(0,1)$.
Using the Hardy-Littlewood-Sobolev theorem (see \cite{LP2015}), if $\frac1{q}=\frac1{q'}-\beta$, which means $q=\frac2{\theta}$ and $\frac1{q}+\frac1{p}=\frac12$, it follows that
\begin{align}
\left\|\int_{-\infty}^{\infty}\frac{C}{|t-t'|^{1-\beta}}\|g(\cdot,t')\|_{L_{xy}^{p'}}dt' \right\|_{L_t^q}\leq C\left\| \|g(\cdot,t')\|_{L_{xy}^{p'}} \right\|_{L_t^{q'}}\,.\label{INTER2}
\end{align}
Taking into account that $q=\frac2{\theta}$, from \eqref{INTER1} and \eqref{INTER2} it follows estimate \eqref{ISE}. Finally, using the Stein-Tomas argument as in the proof of Lemma 2.4 in \cite{KPV1989}, from estimate \eqref{ISE} it follows estimate \eqref{SE1}.
\end{proof}

Let us observe that $(2,\infty)$ is not an admissible pair. We need to lose a little bit of regularity in both $x$ and $y$ directions in order to control the norm $L_T^2L_{xy}^{\infty}$.

\begin{corollary}\label{C4.10} For  $\alpha\in(0,1]$, $T>0$ and $\delta>0$ small enough, there exist $\tilde k_\delta\in(0,\frac12)$ and $c_{\delta}>0$ such that
\begin{align}
\|U_{\alpha}(\cdot_t)\phi\|_{L^2_T L^\infty_{xy}}\leq c_{\delta} T^{\tilde k_\delta} \left(\|\phi\|_{L_{xy}^2}+\|D_x^{\frac14(1-\alpha)+\delta}\phi\|_{L_{xy}^2}+\|D_y^{\delta}\phi\|_{L_{xy}^2}+\|D_x^{\frac14(1-\alpha)}D_y^{\delta}\phi\|_{L_{xy}^2} \right).\label{lessreg}
\end{align}
\end{corollary}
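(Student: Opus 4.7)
The plan is to combine the Strichartz estimate of Proposition \ref{SE} with a near-endpoint Sobolev embedding in the spatial variables to exchange $L^p_{xy}$ for $L^\infty_{xy}$ at the cost of $\delta$ derivatives, and then to pay an extra $T^{\tilde k_\delta}$ via Hölder in time to pass from $L^q_T$ to $L^2_T$. The reason Proposition \ref{SE} cannot be applied directly is that the pair $(q,p)=(2,\infty)$ is not admissible, and the point of the corollary is to quantify the small regularity loss needed to reach that limiting pair.

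Concretely, I would fix $\epsilon\in(0,\delta)$ and set $q:=2/(1-\epsilon)$, $p:=2/\epsilon$, so that $(q,p)$ is admissible with $q>2$. Since $\delta p=2\delta/\epsilon>2$, the classical two-dimensional Sobolev embedding $W^{\delta,p}(\mathbb R^2)\hookrightarrow L^\infty(\mathbb R^2)$ yields, pointwise in $t$,
\[
\|U_\alpha(t)\phi\|_{L^\infty_{xy}}\lesssim \|J^\delta U_\alpha(t)\phi\|_{L^p_{xy}},\qquad J:=(1-\Delta)^{1/2}.
\]
Taking the $L^q_T$ norm, commuting $J^\delta$ past the Fourier multiplier $U_\alpha(t)$, and invoking Proposition \ref{SE} gives
\[
\|U_\alpha(\cdot_t)\phi\|_{L^q_T L^\infty_{xy}}\lesssim \|D_x^{(1-\alpha)/(2q)} J^\delta \phi\|_{L^2_{xy}}.
\]
The pointwise bound $(1+\xi^2+\eta^2)^{\delta}\lesssim 1+|\xi|^{2\delta}+|\eta|^{2\delta}$ together with Plancherel then produce
\[
\|D_x^{(1-\alpha)/(2q)} J^\delta \phi\|_{L^2_{xy}}\lesssim \|D_x^{(1-\alpha)/(2q)}\phi\|_{L^2_{xy}}+\|D_x^{(1-\alpha)/(2q)+\delta}\phi\|_{L^2_{xy}}+\|D_x^{(1-\alpha)/(2q)}D_y^\delta \phi\|_{L^2_{xy}}.
\]

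A further Plancherel-based decomposition of each of these three norms into the four frequency regions determined by $|\xi|\leq 1$ versus $|\xi|>1$ and $|\eta|\leq 1$ versus $|\eta|>1$, combined with the key inequality $(1-\alpha)/(2q)\leq(1-\alpha)/4$ (valid since $q\geq 2$), shows that each of the three is dominated by the four target norms $\|\phi\|_{L^2_{xy}}$, $\|D_x^{(1-\alpha)/4+\delta}\phi\|_{L^2_{xy}}$, $\|D_y^{\delta}\phi\|_{L^2_{xy}}$, $\|D_x^{(1-\alpha)/4}D_y^{\delta}\phi\|_{L^2_{xy}}$ of the statement. Finally, Hölder in time gives $\|U_\alpha(\cdot_t)\phi\|_{L^2_T L^\infty_{xy}}\leq T^{1/2-1/q}\|U_\alpha(\cdot_t)\phi\|_{L^q_T L^\infty_{xy}}$, with $\tilde k_\delta:=1/2-1/q=\epsilon/2\in(0,1/2)$. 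The main technical point is to match all exponents simultaneously: $\epsilon$ must be small enough that $\delta>2/p=\epsilon$ (to make the Sobolev step applicable) while keeping $q>2$ (for Strichartz admissibility and for a strictly positive Hölder loss in $T$); any $\epsilon\in(0,\delta)$ satisfies both constraints, and the constant $c_\delta$ tracks the dependence on this $\epsilon$ through the Sobolev and Strichartz constants.
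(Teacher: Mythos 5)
Your proposal is correct and follows essentially the same route as the paper: the Sobolev embedding $W^{\delta,p}_{xy}(\mathbb R^2)\hookrightarrow L^\infty_{xy}(\mathbb R^2)$ for $\delta>2/p$, the Strichartz estimate of Proposition \ref{SE} for an admissible pair $(q,p)$ with $q$ close to $2$, H\"older in time to pass from $L^q_T$ to $L^2_T$ with gain $T^{1/2-1/q}$, and the elementary frequency bookkeeping using $\frac{1}{2q}(1-\alpha)\leq\frac14(1-\alpha)$ to land on the four stated norms. The only differences are cosmetic (you apply H\"older at the end rather than at the start, and you make the choice of the admissible pair and the low/high frequency splitting more explicit).
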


\begin{proof} Let us recall the Sobolev embedding (see \cite{T2006}, page 336)
\begin{align}
W_{x,y}^{\delta,p}(\mathbb R^2)\hookrightarrow L^\infty_{x,y}(\mathbb R^2) \label{embedd}
\end{align}
if $1<p<\infty$ and $\delta>0$ is such that $\delta>2/p$, where $W_{x,y}^{\delta,p}(\mathbb R^2)$ denotes the Sobolev space given by the closure of the Schwartz functions under the norm
\[\|f\|_{W_{x,y}^{\delta,p}(\mathbb R^2)}:=\|J^{\delta}f\|_{L_{xy}^p(\mathbb R^2)}\,.\]
Here $J^{\delta}$ is the operator with symbol $(1+\xi^2+\eta^2)^{\delta/2}$. Thus, if we take $\delta>0$ and $p$ such that $\delta>2/p$ and then we consider the admissible pair $(q,p)$, we have that
\begin{align*}
\|U_{\alpha}(\cdot_t)\phi\|^2_{L^2_T L^\infty_{xy}}= \int_0^T\|U_{\alpha}(t)\phi\|^2_{L^\infty_{xy}}dt\lesssim\int_0^T \|U_{\alpha}(t)\phi\|^2_{W_{xy}^{\delta,p}}dt=\int_0^T \|J^\delta U_{\alpha}(t)\phi\|^2_{L^p_{xy}} dt.
\end{align*}
Taking $\tilde q:=\dfrac q2$  and ${\tilde q}'$ such that $\dfrac1{\tilde q}+\dfrac1{{\tilde q}'}=1$, we have that
$$\dfrac1{{\tilde q}'}=1-\dfrac 1{\tilde q}=1-\dfrac 2q=\dfrac{q-2}q.$$
Using Hölder's inequality in $t$ with $\tilde q$ and ${\tilde q}'$, we obtain

\begin{align*}
\|U_{\alpha}(\cdot_t)\phi\|^2_{L^2_T L^\infty_{xy}}\lesssim \left(\int_0^T 1^{\tilde q'} dt\right)^{1/{\tilde q'}} \left(\int_0^T\|J^\delta U_{\alpha}(t) \phi\|^{2\tilde q}_{L^p_{xy}}dt \right)^{1/\tilde q}\lesssim T^{(q-2)/q}\left(\int_0^T \|J^\delta U_{\alpha}(t)\phi\|^q_{L^p_{xy}}dt \right)^{2/q}.
\end{align*}

Thus, from Proposition \ref{SE} it follows that
\begin{align}
\notag\|U_{\alpha}(\cdot_t)\phi\|_{L^2_T L^\infty_{xy}}&\lesssim T^{(q-2)/2q} \|U_{\alpha}(\cdot_t) J^\delta \phi\|_{L^q_T L^p_{xy}}=T^{(q-2)/2q} \|D_x^{\frac1{2q}(\alpha-1)}U_{\alpha}(\cdot_t)J^\delta D_x^{\frac1{2q}(1-\alpha)} \phi\|_{L^q_T L^p_{xy}}\\
 &\lesssim T^ {(q-2)/2q}\|J^{\delta}D_x^{\frac1{2q}(1-\alpha)} \phi\|_{L_{xy}^2}=T^{\tilde k_\delta}\|J^{\delta}D_x^{\frac1{2q}(1-\alpha)} \phi\|_{L_{xy}^2},\label{cuatro}
\end{align}
where $\tilde k_\delta=\dfrac{q-2}{2q}\in(0,1/2)$.\\

If $\alpha=1$ estimate \eqref{lessreg} follows from \eqref{cuatro}. Let us suppose then that $\alpha\in(0,1)$. From \eqref{cuatro}, taking into account that $\frac1{2q}(1-\alpha)<\frac14(1-\alpha)$, we can conclude, for $\delta\leq\frac14(1-\delta)$, that
\begin{align*}
\|U_{\alpha}(\cdot_t)&\phi\|_{L^2_T L^\infty_{xy}}\lesssim T^{\tilde k_\delta}\left(\|D_x^{\frac1{2q}(1-\alpha)} \phi\|_{L_{xy}^2}+\|D_x^{\delta}D_x^{\frac1{2q}(1-\alpha)} \phi\|_{L_{xy}^2}+\|D_y^{\delta}D_x^{\frac1{2q}(1-\alpha)} \phi\|_{L_{xy}^2}\right)\\
&\lesssim T^{\tilde k_\delta}\left(\|\phi\|_{L_{xy}^2}+\|D_x^{\frac1{4}(1-\alpha)} \phi\|_{L_{xy}^2}+\|D_x^{\delta}\phi\|_{L_{xy}^2}+\|D_x^{\frac1{4}(1-\alpha)}D_x^{\delta} \phi\|_{L_{xy}^2}+\|D_y^{\delta}\phi\|_{L_{xy}^2}+\|D_x^{\frac1{4}(1-\alpha)} D_y^{\delta}\phi\|_{L_{xy}^2}\right)\\
&\lesssim T^{\tilde k_\delta} (\|\phi\|_{L_{xy}^2}+\|D_x^{\frac14(1-\alpha)+\delta}\phi\|_{L_{xy}^2}+\|D_y^{\delta}\phi\|_{L_{xy}^2}+\|D_x^{\frac14(1-\alpha)}D_y^{\delta}\phi\|_{L_{xy}^2})\,,
\end{align*}
and estimate \eqref{lessreg} holds.
\end{proof}

\begin{lemma}\label{L4.11} Let $\alpha\in(0,1]$, $s_{\alpha}:=\frac32+\frac14(1-\alpha)$, $\delta>0$ small enough and $T>0$. Suppose that $w$ is a smooth solution of the linear problem
\begin{align}
u_t+D_x^{\alpha}u_x+\mathcal Hu_{yy}=F.\label{lineareq}
\end{align}
Then, there exist $k_\delta\in(1/2,1)$ and $c_\delta>0$ such that
\begin{align}
\|\partial_x w\|_{L^1_T L^\infty_{xy}}\leq&c_\delta T^{k_\delta}\left(\|w\|_{L^\infty_T H^{s_{\alpha}+2\delta}_{xy}}+\int_0^T \|F(\cdot,t)\|_{H^{s_{\alpha}-1+2\delta}_{xy}} dt \right),\label{4.25}\\
\|\partial_y w\|_{L^1_T L^\infty_{xy}}\leq&c_\delta T^{k_\delta}\left(\|w\|_{L^\infty_T H^{s_{\alpha}+2\delta}_{xy}}+\int_0^T \|F(\cdot,t)\|_{H^{s_{\alpha}-1+2\delta}_{xy}} dt \right).\label{4.25b}
\end{align}
\end{lemma}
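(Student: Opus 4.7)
The plan is to follow the refined Strichartz strategy of Koch--Tzvetkov and Kenig--Koenig, adapted to the 2D setting by replacing the unidimensional Strichartz estimate with Corollary \ref{C4.10}. After a Littlewood--Paley decomposition $w=\sum_N P_N w$ in $(x,y)$, one subdivides $[0,T]$ into intervals whose length depends on the dyadic frequency $N$; on each small interval one applies Duhamel starting from the left endpoint, invokes Corollary \ref{C4.10}, and then sums back up. Since $\partial_x$ and $\partial_y$ both produce one derivative, and the spatial localization $P_N$ converts every differential operator on the right of \eqref{lessreg} into a power of $N$, the two estimates \eqref{4.25} and \eqref{4.25b} are proved by the same calculation.

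Concretely, for each dyadic $N$ I set $\tau_N:=\min(T,N^{-1})$ and split $[0,T]$ into subintervals $I_j=[a_j,a_j+\tau_N]$ of length $\tau_N$. For $t\in I_j$, Duhamel gives $P_N w(t)=U_\alpha(t-a_j)P_N w(a_j)+\int_{a_j}^{t}U_\alpha(t-t')P_N F(t')\,dt'$. Using Cauchy--Schwarz in time, then Corollary \ref{C4.10} on each piece (and Minkowski's integral inequality to bring the norm past the $dt'$ integral in the Duhamel term), and using that $P_N$ localizes the frequency so that $\|D_x^{\frac14(1-\alpha)}D_y^{\delta}\partial_x P_N\phi\|_{L^2_{xy}}\lesssim N^{1+\frac14(1-\alpha)+\delta}\|P_N\phi\|_{L^2_{xy}}$ (which is the dominant term among the four in \eqref{lessreg} when $\delta$ is small), I obtain
\[
\|\partial_x P_N w\|_{L^1_{I_j}L^\infty_{xy}}\leq \tau_N^{1/2}\|\partial_x P_N w\|_{L^2_{I_j}L^\infty_{xy}}\lesssim \tau_N^{\frac12+\tilde k_\delta}N^{1+\frac14(1-\alpha)+\delta}\bigl(\|P_N w(a_j)\|_{L^2_{xy}}+\|P_N F\|_{L^1_{I_j}L^2_{xy}}\bigr).
\]

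Next I sum over the $\lesssim T/\tau_N+1$ subintervals (bounding $\|P_N w(a_j)\|_{L^2}\leq \|P_N w\|_{L^\infty_T L^2}$) and then over dyadic $N$. In the regime $N\leq T^{-1}$ there is just one subinterval, while for $N>T^{-1}$ one has $\tau_N=N^{-1}$ with $\sim TN$ subintervals; in the latter case the $w$-contribution becomes $TN^{s_\alpha+\delta-\tilde k_\delta}\|P_N w\|_{L^\infty_T L^2_{xy}}$ and the $F$-contribution becomes $N^{s_\alpha-1+\delta-\tilde k_\delta}\|P_N F\|_{L^1_TL^2_{xy}}$. Cauchy--Schwarz in the dyadic sum then converts the remaining $N$-exponent into the $H^{s_\alpha+2\delta}$ norm of $w$ (resp.\ the $H^{s_\alpha-1+2\delta}$ norm of $F$), since the residual exponent $-(\tilde k_\delta+\delta)$ makes the geometric sum convergent. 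Because the choice $\tau_N=N^{-1}$ exactly balances the powers of $N$ coming from $\partial_x$ and from the Strichartz estimate, the same calculation verbatim (replacing $\partial_x$ by $\partial_y$) produces \eqref{4.25b}.

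The step I expect to be delicate is juggling the various powers of $T$ that come out of the three sources (Cauchy--Schwarz in time yields $\tau_N^{1/2}$, Corollary \ref{C4.10} yields $\tau_N^{\tilde k_\delta}$, and Cauchy--Schwarz in the dyadic frequency sum yields a power of $T$ from the tail bound $\sum_{N>T^{-1}}N^{-2(\tilde k_\delta+\delta)}\sim T^{2(\tilde k_\delta+\delta)}$) and organizing them into a single factor $T^{k_\delta}$ with $k_\delta\in(1/2,1)$. For $T\leq 1$, collecting the contributions of the low-frequency and high-frequency regimes and taking $k_\delta=\tfrac12+\tilde k_\delta$ (and shrinking $\delta$ if necessary so that $\tilde k_\delta<\tfrac12$, which is guaranteed by Corollary \ref{C4.10}) yields both \eqref{4.25} and \eqref{4.25b}; the case $T>1$ can be reduced to $T\le 1$ by partitioning $[0,T]$ into unit subintervals and summing, since the constant in the estimate depends only on $\delta$.
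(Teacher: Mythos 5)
Your proof is correct and follows essentially the same route as the paper's: a Littlewood--Paley decomposition combined with a splitting of $[0,T]$ into subintervals of frequency-dependent length $\sim N^{-1}$, Duhamel plus Corollary \ref{C4.10} on each piece, and resummation, with the derivative loss absorbed by the frequency localization. The remaining differences are cosmetic --- the paper uses subintervals of length $T/\Lambda$ and an $\ell^1$ dyadic sum with a spare factor $2^{-k\delta}$ where you use $\min(T,N^{-1})$ and Cauchy--Schwarz in $N$, and it treats the low-frequency block $Q_0w$ separately --- and your bookkeeping lands on the same exponent $k_\delta=\tfrac12+\tilde k_\delta$.
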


\begin{proof} We prove \eqref{4.25}, being the proof of \eqref{4.25b} analogous. In order to do this, we use the argument in \cite{Ke2004}. First, we use a Littlewood-Paley decomposition of $\widehat{w}$ in $(\xi,\eta)$. That is, let $\varphi \in C_0^\infty(1/2<|(\xi,\eta)|<2)$ and $\chi\in C_0^\infty(|(\xi,\eta)|<2)$ such that
$$\sum_{k=1}^\infty \varphi\left( \dfrac{\xi}{2^k},\dfrac{\eta}{2^k}\right)+\chi(\xi,\eta)=1.$$
For $\Lambda:=2^k$, $k\geq 1$, define $w_\Lambda\equiv \Delta_\Lambda w:=Q_k w$, where
$$\widehat{Q_k w}(\xi,\eta)=\varphi\left( \dfrac{\xi}{2^k},\dfrac{\eta}{2^k}\right)\widehat w(\xi,\eta),$$
$w_0\equiv\Delta_0w:=Q_0 w$, and
$$\widehat{Q_0 w}(\xi,\eta)=\chi(\xi,\eta )\widehat w(\xi,\eta).$$
Let $\tilde{w}:=\displaystyle{\sum_{k\geq 1}} Q_k w$, then $w=\tilde w+w_0$.

Let us estimate $\|\partial_x w_0\|_{L^1_T L^\infty_{xy}}$. Taking into account that $w_0$ is a solution of the integral equation
\begin{align}
\partial_x w_0(t)=U_{\alpha}(t)\partial_x Q_0 w(0)+\int_0^t U_{\alpha}(t-t')\partial_x Q_0 F(\cdot,t') dt',\label{inteq}
\end{align}
we have that
\begin{align}
\notag\|\partial_x w_0\|_{L^1_T L^\infty_{xy}}&\lesssim \|U_{\alpha}(\cdot_t)\partial_x Q_0 w(0)\|_{L^1_T L^\infty_{xy}}+\int_0^T \|U_{\alpha}(\cdot_t-t')\partial_x Q_0 F(\cdot,t')\|_{L^1_T L^\infty_{xy}} dt'\\
&\lesssim T^{1/2} \left( \|U_{\alpha}(\cdot_t)\partial_x Q_0 w(0)\|_{L^2_T L^\infty_{xy}}+\int_0^T \|U_{\alpha}(\cdot_t-t')\partial_x Q_0 F(\cdot,t')\|_{L^2_T L^\infty_{xy}} dt'\right).\label{dobestrella}
\end{align}
Let us estimate $\|U_{\alpha}(\cdot_t)\partial_x Q_0 w(0)\|_{L^2_T L^\infty_{xy}}$. Define $\widetilde \chi\in C_0^\infty(\mathbb R^2)$ such that $\supp \widetilde \chi \subset\{(\xi,\eta):|(\xi,\eta)|<4\}$ and $\widetilde \chi\equiv 1$ in $\supp \chi$. Then
\begin{align*}
i\xi \widetilde \chi(\xi,\eta) e^{-it(\xi|\xi|^{\alpha}+ \sgn(\xi)\eta^2)}\chi(\xi,\eta) \widehat {w(0)}(\xi,\eta)&=i\xi e^{-it(\xi|\xi|^{\alpha}+ \sgn(\xi)\eta^2)}\chi(\xi,\eta)\widehat {w(0)}(\xi,\eta)=[U_{\alpha}(\cdot_t)\partial_x Q_0 w(0)]^{\wedge}(\xi,\eta).
\end{align*}
Hence
\begin{align*}
(i\xi\tilde\chi(\xi,\eta))^\vee \ast (e^{-it(\xi|\xi|^{\alpha}+ \sgn(\xi)\eta^2)}\chi(\xi,\eta) \widehat {w(0)}(\xi,\eta))^{\vee}=U_{\alpha}(\cdot_t)\partial_x Q_0 w(0),
\end{align*}
and
\begin{align}
\|U_{\alpha}(t)\partial_x Q_0 w(0)\|_{L^\infty_{xy}}\lesssim \|(i\xi \tilde\chi (\xi,\eta))^\vee \|_{L^1_{xy}} \|U_{\alpha}(t)Q_0 w(0)\|_{L^\infty_{xy}}\leq C \|U_{\alpha}(t) Q_0 w(0)\|_{L^\infty_{xy}}.\label{estrella}
\end{align}
From \eqref{dobestrella}, \eqref{estrella} and \eqref{lessreg} we conclude that
\begin{align}
\notag \|&\partial_x w_0\|_{L^1_T L^\infty_{xy}}\lesssim T^{1/2} \left(\|U_{\alpha}(\cdot_t)Q_0 w(0)\|_{L^2_T L^\infty_{xy}}+\int_0^T \|U_{\alpha}(\cdot_t-t') Q_0 F(\cdot, t')\|_{L^2_T L^\infty_{xy}} dt' \right)\\
\notag&\lesssim T^{1/2+\tilde k_\delta} \left( \|Q_0 w(0)\|_{L^2}+\|D_x^{\frac14(1-\alpha)+\delta}Q_0w(0)\|_{L^2}+\|D_y^{\delta}Q_0w(0)\|_{L^2}+\|D_x^{\frac14(1-\alpha)}D_y^{\delta}Q_0w(0)\|_{L^2}\right)\\
\notag&+T^{1/2+\tilde k_\delta}\int_0^T \left( \|Q_0 F(\cdot,t')\|_{L^2}+\|D_x^{\frac14(1-\alpha)+\delta}Q_0F(\cdot,t')\|_{L^2}+\|D_y^{\delta}Q_0F(\cdot,t')\|_{L^2}+\|D_x^{\frac14(1-\alpha)}D_y^{\delta}Q_0F(\cdot,t')\|_{L^2}\right) dt' \\
&\lesssim T^{1/2+\tilde k_\delta} \left( \|w(0)\|_{H^{s_{\alpha}+2\delta}(\mathbb R^2)}+\int_0^T \|F(\cdot,t')\|_{H^{s_{\alpha}-1+2\delta}(\mathbb R^2)} dt'\right).\label{4.27}
\end{align}

Now we estimate $\|\partial_x w_\Lambda\|_{L^1_T L^\infty_{xy}}$, where $\Lambda=2^k$, $k\geq 1$. To this purpose we split $[0,T]=\bigcup_j I_j$, where $I_j=[a_j,b_j]$ and $b_j-a_j=T/\Lambda$, for $j=1,\dots,\Lambda$. Define $\widetilde\varphi\in C^\infty_0(\mathbb R^2)$ such that $\supp \widetilde\varphi\subset\{(\xi,\eta):1/4<|(\xi,\eta)|<4\}$ and $\widetilde\varphi\equiv 1$ in $\supp \varphi$. Thus
$$(\partial_x w_\Lambda)^{\wedge}(\xi,\eta)=i\xi \varphi\left(\dfrac\xi{2^k},\dfrac\eta{2^k} \right)\widehat w(\xi,\eta)=i\xi\widetilde\varphi\left(\dfrac\xi{2^k},\dfrac\eta{2^k}\right)\varphi\left(\dfrac\xi{2^k},\dfrac\eta{2^k}\right)\widehat w(\xi,\eta).$$
This way,
$$\partial_x w_\Lambda=\left(i\xi \widetilde \varphi\left(\dfrac\xi{2^k},\dfrac\eta{2^k}\right) \right)^\vee \ast w_\Lambda$$
and
$$\|\partial_x w_\Lambda\|_{L^\infty_{xy}}\lesssim \left\|\left(i\xi \tilde \varphi\left(\dfrac\xi{2^k},\dfrac\eta{2^k}\right) \right)^\vee\right\|_{L^1_{xy}}\|w_\Lambda\|_{L^\infty_{xy}}.$$
Let us observe that
\begin{align*}
\left\|\left(i\xi \tilde \varphi\left(\dfrac\xi{2^k},\dfrac\eta{2^k}\right) \right)^\vee\right\|_{L^1_{xy}}&=\int_{\mathbb R_{xy}} \left| c\int_{\mathbb R_{\xi\eta}} e^{i(x\xi+y\eta)}i\xi \tilde \varphi\left(\dfrac\xi{2^k},\dfrac\eta{2^k}\right) d\xi d\eta  \right| dx dy\\
&=\int_{\mathbb R_{xy}} c\left| \int_{\mathbb R_{\xi'\eta'}} e^{i(2^kx\xi'+2^k y \eta')} i2^k \xi' \tilde\varphi(\xi',\eta') 2^{2k} d\xi' d\eta' \right| dx dy\\
&= 2^k \int_{\mathbb R_{x'y'}} c \left| \int_{\mathbb R_{\xi'\eta'}}  e^{i(x'\xi'+y'\eta')} i\xi' \tilde \varphi(\xi',\eta') d\xi' d\eta'\right| dx' dy'=C 2^k=C\Lambda.
\end{align*}
Hence $\|\partial_x w_\Lambda\|_{L^\infty_{xy}}\lesssim C\Lambda\|w_{\Lambda}\|_{L^\infty_{xy}}$. In consequence, using Cauchy-Schwarz inequality, we obtain
\begin{align}
\notag\|\partial_x w_{\Lambda}\|_{L^1_T L^\infty_{xy}}&= \sum_j \|\partial_x w_\Lambda\|_{L^1_{I_j}L^\infty_{xy}}\lesssim \Lambda \sum_j \|w_\Lambda\|_{L^1_{I_j}L^\infty_{xy}}\\
&\leq \Lambda \sum_j (b_j-a_j)^{1/2} \|w_{\Lambda}\|_{L^2_{I_j}L^\infty_{xy}}\leq (T\Lambda)^{1/2} \sum_j \|w_\Lambda\|_{L^2_{I_j}L^\infty_{xy}}.\label{4.28}
\end{align}
Next, using Duhamel's formula in each $I_j$, we obtain, for $t\in I_j$,
\begin{align}
w_\Lambda(t)=U_{\alpha}(t-a_j)w_\Lambda(a_j)+\int_{a_j}^t U_{\alpha}(t-t') F_\Lambda(t') dt'.\label{4.29}
\end{align}
Thus combining \eqref{4.28} and \eqref{4.29} and taking into account \eqref{lessreg}, we deduce that
\begin{align*}
\|\partial_x &w_\Lambda\|_{L^1_T L^\infty_{xy}}\lesssim (T\Lambda)^{1/2} \sum_j \Big( \|U_{\alpha}(\cdot_t-a_j)w_\Lambda(a_j)\|_{L^2_{I_j}L^\infty_{xy}}+ \Big\| \int_{a_j}^t U_{\alpha}(\cdot_t-t') F_\Lambda(t')dt' \Big\|_{L^2_{I_j}L^\infty_{xy}} \Big)\\
&\lesssim \Lambda^{1/2}T^{1/2+\tilde k_\delta}\sum_j \left(\|w_\Lambda(a_j)\|_{L^2}+\|D_x^{\frac14(1-\alpha)+\delta}w_\Lambda(a_j)\|_{L^2}+\|D_y^{\delta}w_\Lambda(a_j)\|_{L^2}+\|D_x^{\frac14(1-\alpha)}D_y^{\delta}w_\Lambda(a_j)\|_{L^2}\right)\\
&+\Lambda^{1/2}T^{1/2+\tilde k_\delta}\sum_j \int_{I_j}\left(\|F_\Lambda(t')\|_{L^2}+\|D_x^{\frac14(1-\alpha)+\delta}F_\Lambda(t')\|_{L^2}+\|D_y^{\delta}F_\Lambda(t')\|_{L^2}+\|D_x^{\frac14(1-\alpha)}D_y^{\delta}F_\Lambda(t')\|_{L^2} \right) dt'.
\end{align*}
Let us define $k_\delta:=1/2+\tilde k_\delta\in(1/2,1)$. We observe that in the support of $(w_\Lambda(a_j))^\wedge$ and $(F_\Lambda(t'))^\wedge$, $|(\xi,\eta)|\sim 2^k=\Lambda$, $\Lambda^{1/2}\|w_\Lambda(a_j)\|_{L^2}\sim\|w_\Lambda(a_j)\|_{H^{1/2}}$, $\Lambda^{1/2}\|D_x^{\frac14(1-\alpha)+\delta}w_\Lambda(a_j)\|_{L^2}\sim\|D_x^{\frac14(1-\alpha)+\delta}w_\Lambda(a_j)\|_{H^{1/2}}$, $\Lambda^{1/2}\|D_y^{\delta}w_\Lambda(a_j)\|_{L^2}\sim\|D_y^{\delta}w_\Lambda(a_j)\|_{H^{1/2}}$ and, $\Lambda^{1/2}\|D_x^{\frac14(1-\alpha)}D_y^{\delta}w_\Lambda(a_j)\|_{L^2}\sim\|D_x^{\frac14(1-\alpha)}D_y^{\delta}w_\Lambda(a_j)\|_{H^{1/2}}$. Therefore
\begin{align}
\notag \|\partial_x& w_\Lambda\|_{L^1_T L^\infty_{xy}}\lesssim T^{k_\delta}\sum_j \left(\|w_\Lambda(a_j)\|_{H^{1/2}}+\|D_x^{\frac14(1-\alpha)+\delta}w_\Lambda(a_j)\|_{H^{1/2}}+\|D_y^{\delta}w_\Lambda(a_j)\|_{H^{1/2}}+\|D_x^{\frac14(1-\alpha)}D_y^{\delta}w_\Lambda(a_j)\|_{H^{1/2}}\right)\\
\notag&+T^{ k_\delta}\sum_j \int_{I_j}\left(\|F_\Lambda(t')\|_{H^{1/2}}+\|D_x^{\frac14(1-\alpha)+\delta}F_\Lambda(t')\|_{H^{1/2}}+\|D_y^{\delta}F_\Lambda(t')\|_{H^{1/2}}+\|D_x^{\frac14(1-\alpha)}D_y^{\delta}F_\Lambda(t')\|_{H^{1/2}} \right) dt'\\
\notag&\lesssim T^{k_\delta}\sum_j \Lambda^{-1}\left(\|w_\Lambda(a_j)\|_{H^{3/2}}+\|D_x^{\frac14(1-\alpha)+\delta}w_\Lambda(a_j)\|_{H^{3/2}}+\|D_y^{\delta}w_\Lambda(a_j)\|_{H^{3/2}}+\|D_x^{\frac14(1-\alpha)}D_y^{\delta}w_\Lambda(a_j)\|_{H^{3/2}}\right)\\
\notag&+T^{ k_\delta}\int_0^T\left(\|F_\Lambda(t')\|_{H^{1/2}}+\|D_x^{\frac14(1-\alpha)+\delta}F_\Lambda(t')\|_{H^{1/2}}+\|D_y^{\delta}F_\Lambda(t')\|_{H^{1/2}}+\|D_x^{\frac14(1-\alpha)}D_y^{\delta}F_\Lambda(t')\|_{H^{1/2}} \right) dt'\\
\notag&\lesssim T^{k_\delta} \sup_{t\in[0,T]}\left(\|w_\Lambda(t)\|_{H^{3/2}}+ \|D_x^{\frac14(1-\alpha)+\delta}w_\Lambda(t)\|_{H^{3/2}}+\|D_y^{\delta}w_\Lambda(t)\|_{H^{3/2}}+\|D_x^{\frac14(1-\alpha)}D_y^{\delta}w_\Lambda(t)\|_{H^{3/2}}\right)\big(\sum_j \Lambda^{-1}\big)\\
&+T^{k_\delta}\int_0^T \left(\|F_\Lambda(t')\|_{H^{1/2}}+\|D_x^{\frac14(1-\alpha)+\delta}F_\Lambda(t')\|_{H^{1/2}}+\|D_y^{\delta}F_\Lambda(t')\|_{H^{1/2}}+\|D_x^{\frac14(1-\alpha)}D_y^{\delta}F_\Lambda(t')\|_{H^{1/2}} \right)dt'.\label{4.30}
\end{align}
Hence
\begin{align}
\notag &\|\partial_x \tilde w\|_{L^1_T L^\infty_{xy}}\lesssim \sum_{k\geq 1} \|\partial_x Q_k w\|_{L^1_T L^\infty_{xy}}\\
\notag&\lesssim  T^{k_\delta} \sum_{k\geq1}\sup_{t\in[0,T]}(\|Q_kw(t)\|_{H^{3/2}}+ \|D_x^{\frac14(1-\alpha)+\delta}Q_kw(t)\|_{H^{3/2}}+\|D_y^{\delta}Q_kw(t)\|_{H^{3/2}}+\|D_x^{\frac14(1-\alpha)}D_y^{\delta}Q_kw(t)\|_{H^{3/2}})\\
\notag&+T^{k_\delta}\sum_{k\geq1}\int_0^T (\|Q_kF(t')\|_{H^{1/2}}+\|D_x^{\frac14(1-\alpha)+\delta}Q_kF(t')\|_{H^{1/2}}+\|D_y^{\delta}Q_kF(t')\|_{H^{1/2}}+\|D_x^{\frac14(1-\alpha)}D_y^{\delta}Q_kF(t')\|_{H^{1/2}} )dt'\\
\notag &\leq T^{k_\delta} \sum_{k\geq 1} 2^{-k\delta} \sup_{t\in[0,T]} (\|Q_kw(t)\|_{H^{3/2+\delta}}+ \|D_x^{\frac14(1-\alpha)+\delta}Q_kw(t)\|_{H^{3/2+\delta}}+\|D_y^{\delta}Q_kw(t)\|_{H^{3/2+\delta}}\\
\notag&+\|D_x^{\frac14(1-\alpha)}D_y^{\delta}Q_kw(t)\|_{H^{3/2+\delta}}) \\
\notag&+T^{k_\delta}\sum_{k\geq1}2^{-k\delta}\int_0^T (\|Q_kF(t')\|_{H^{1/2+\delta}}+\|D_x^{\frac14(1-\alpha)+\delta}Q_kF(t')\|_{H^{1/2+\delta}}+\|D_y^{\delta}Q_kF(t')\|_{H^{1/2+\delta}}\\
\notag&+\|D_x^{\frac14(1-\alpha)}D_y^{\delta}Q_kF(t')\|_{H^{1/2+\delta}} )dt'\\
\notag &\lesssim T^{k_\delta} \sup_{t\in[0,T]} (\|w(t)\|_{H^{3/2+\delta}}+ \|D_x^{\frac14(1-\alpha)+\delta}w(t)\|_{H^{3/2+\delta}}+\|D_y^{\delta}w(t)\|_{H^{3/2+\delta}}+\|D_x^{\frac14(1-\alpha)}D_y^{\delta}w(t)\|_{H^{3/2+\delta}})  (\sum_{k\geq 1} 2^{-k\delta} ))  \\
\notag&+T^{k_\delta}\int_0^T (\|F(t')\|_{H^{1/2+\delta}}+\|D_x^{\frac14(1-\alpha)+\delta}F(t')\|_{H^{1/2+\delta}}+\|D_y^{\delta}F(t')\|_{H^{1/2+\delta}}\\
\notag&+\|D_x^{\frac14(1-\alpha)}D_y^{\delta}F(t')\|_{H^{1/2+\delta}} )dt'(\sum_{k\geq1}2^{-k\delta})\\
&\lesssim T^{k_\delta} ( \|w\|_{L^\infty_T H^{3/2+\frac14(1-\alpha)+2\delta}(\mathbb R^2)} +\int_0^T \|F(\cdot,t)\|_{H^{1/2+\frac14(1-\alpha)+2\delta}(\mathbb R^2)}dt).   \label{4.31}
\end{align}
Thus, from \eqref{4.27} and \eqref{4.31} it follows \eqref{4.25}.
\end{proof}

\section{Energy Estimates}

Using energy estimates on the fractional two-dimensional BO equation and the classical Kato-Ponce commutator inequality, in this section we obtain the following a priori estimate for sufficiently smooth solutions of the fractional BO equation.

\begin{lemma} Let $s>0$ and $T>0$. Let $u\in C([0,T];H^\infty(\mathbb R^2))$ be a real solution of the IVP \eqref{BO}. Then, there exists a positive constant $C_0$ such that
\begin{align}
\|u\|^2_{L^\infty_T H^s_{xy}}\leq \|u_0\|^2_{H^s_{xy}}+C_0\|\nabla u\|_{L^1_TL^\infty_{xy}} \|u\|^2_{L^\infty_T H^s_{xy}}.\label{4.34}
\end{align}
\end{lemma}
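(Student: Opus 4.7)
The plan is the standard energy method combined with a Kato--Ponce commutator estimate. I apply the operator $J^s:=(1-\Delta)^{s/2}$ (where $\Delta=\partial_x^2+\partial_y^2$) to the equation \eqref{BO}, pair with $J^s u$ in $L^2(\mathbb R^2)$ and integrate, obtaining
\[
\tfrac12\tfrac{d}{dt}\|u(t)\|_{H^s}^2+\langle J^s D_x^\alpha u_x,J^s u\rangle+\langle J^s\mathcal H u_{yy},J^s u\rangle+\langle J^s(uu_x),J^s u\rangle=0.
\]

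First I would dispose of the two dispersive terms. Since $u$ is real, $J^s u$ is real; the Fourier multipliers of $D_x^\alpha\partial_x$ and $\mathcal H\partial_y^2$ are $i|\xi|^\alpha\xi$ and $i\sgn(\xi)\eta^2$, which are purely imaginary and odd in their relevant variables. Consequently both inner products are purely imaginary, but they are also real (as pairings of real functions), hence equal to zero. This uses only that $J^s$ commutes with Fourier multipliers.

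The main work is the nonlinear term, which I split as
\[
\langle J^s(uu_x),J^s u\rangle=\langle[J^s,u]u_x,J^s u\rangle+\langle uJ^s u_x,J^s u\rangle.
\]
For the second piece I integrate by parts in $x$:
\[
\langle uJ^s u_x,J^s u\rangle=\tfrac12\int u\,\partial_x(J^s u)^2\,dxdy=-\tfrac12\int u_x(J^s u)^2\,dxdy,
\]
so $|\langle uJ^s u_x,J^s u\rangle|\le\tfrac12\|u_x\|_{L^\infty_{xy}}\|u\|_{H^s}^2$. For the commutator piece I invoke the Kato--Ponce commutator inequality in $\mathbb R^2$,
\[
\|[J^s,u]u_x\|_{L^2}\lesssim\|\nabla u\|_{L^\infty_{xy}}\|J^{s-1}u_x\|_{L^2}+\|J^s u\|_{L^2}\|u_x\|_{L^\infty_{xy}},
\]
and bound $\|J^{s-1}u_x\|_{L^2}\le\|J^s u\|_{L^2}$ since $|\xi|\le\langle(\xi,\eta)\rangle$. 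By Cauchy--Schwarz,
\[
|\langle[J^s,u]u_x,J^s u\rangle|\lesssim\|\nabla u\|_{L^\infty_{xy}}\|u\|_{H^s}^2.
\]

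Combining these bounds yields $\tfrac{d}{dt}\|u(t)\|_{H^s}^2\le C\|\nabla u(t)\|_{L^\infty_{xy}}\|u(t)\|_{H^s}^2$. Integrating from $0$ to $t\in[0,T]$ and bounding $\|u(\tau)\|_{H^s}^2\le\|u\|_{L^\infty_TH^s_{xy}}^2$ inside the time integral gives
\[
\|u(t)\|_{H^s}^2\le\|u_0\|_{H^s}^2+C_0\|u\|_{L^\infty_TH^s_{xy}}^2\int_0^T\|\nabla u(\tau)\|_{L^\infty_{xy}}d\tau,
\]
and taking the supremum over $t\in[0,T]$ on the left produces \eqref{4.34}. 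The only real obstacle is verifying that the dispersive contributions vanish for $s>0$ (needing $u$ real) and invoking Kato--Ponce in the two-dimensional setting with $s>0$; both are classical and cause no difficulty given $u\in C([0,T];H^\infty(\mathbb R^2))$.
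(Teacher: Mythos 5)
Your proof is correct and follows essentially the same route as the paper: the dispersive terms are eliminated by the skew-adjointness of $D_x^{\alpha}\partial_x+\mathcal H\partial_{yy}$ (your Fourier-parity argument is just this fact in disguise), the nonlinearity is split into the commutator $[J^s,u]\partial_xu$ plus the integration-by-parts term, and Kato--Ponce together with $\|J^{s-1}\partial_xu\|_{L^2}\leq\|J^su\|_{L^2}$ closes the differential inequality before integrating in time. No gaps.
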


\begin{proof} First of all, let us observe that the operator  $D_x^{\alpha} \partial_x +\mathcal H\partial_{yy}$ is skew-adjoint in $L^2(\mathbb R^2)$. In fact, if we denote by $(\cdot,\cdot)$ the inner product in $L^2$ we have
\begin{align*}
(D_x^{\alpha} u_x +\mathcal Hu_{yy},v)&=((|\xi|^{\alpha}i\xi-i\sgn(\xi)i^2\eta^2)\hat{u},\hat v)=(\hat u, (-|\xi|^{\alpha}i\xi+i\sgn(\xi)i^2\eta^2)\hat v)\\
&=(u,-D_x^{\alpha}v_x-\mathcal H v_{yy})=-(u,D_x^{\alpha}v_x+\mathcal H v_{yy})\,.
\end{align*}
If we take in the last equality $u=v$ a real function, then we obtain
\begin{align}
(D_x^{\alpha} u_x +\mathcal Hu_{yy},u)=0\,.\label{SA}
\end{align}
Applying the operator $J^s\equiv (1-\Delta)^{s/2}$ to the equation in \eqref{BO}, multiplying by $J^su(t)$, integrating in $\mathbb R^2_{xy}$ and, taking into account \eqref{SA},we obtain, for $t\in[0,T]$, that
\begin{align}
\dfrac12\dfrac d{dt}\|J^s u(t)\|^2_{L^2_{xy}}+\int_{\mathbb R^2}J^s(u(t)\partial_x u(t))J^su(t) dxdy=0.\label{ag02_2}
\end{align}
Kato-Ponce's commutator inequality (see \cite{KaPo1988}) affirms that
\begin{align}
\|[J^s,f]g\|_{L^2(\mathbb R^2)}\leq C (\|\nabla f\|_{L^\infty(\mathbb R^2)}\|J^{s-1}g\|_{L^2(\mathbb R^2)}+\|J^s f\|_{L^2(\mathbb R^2)}\|g\|_{L^\infty(\mathbb R^2)})\,,\label{KP}
\end{align}
where $[J^s,f]g:=J^s(fg)-fJ^sg$.\\
Let us note that
\begin{align}
\notag \int_{\mathbb R^2} J^s(u(t)\partial_x u(t))J^s u(t)dx dy&=(J^s(u(t)\partial_x u(t)),J^s(u(t)))\\
&=([J^s,u(t)]\partial_x u(t),J^s u(t))-\dfrac12(\partial_x u(t)J^s u(t),J^s u(t)).\label{ag02_3}
\end{align}
From Kato-Ponce's commutator inequality it follows that
\begin{align}
\notag \|[J^s,u(t)]\partial_x u(t)\|_{L^2}&\leq C(\|\nabla u(t)\|_{L^\infty_{xy}}\|J^{s-1}\partial_x u(t)\|_{L^2(\mathbb R^2)}+\|J^su(t)\|_{L^2(\mathbb R^2)}\|\partial_x u(t)\|_{L^\infty(\mathbb R^2)})\\
&\leq C \|\nabla u(t)\|_{L^\infty_{xy}}\|J^s u(t)\|_{L^2(\mathbb R^2)}.\label{ag02_4}
\end{align}

In consequence, from \eqref{ag02_2} to \eqref{ag02_4} we can conclude that
\begin{align}
\dfrac d{dt}\|u(t)\|^2_{H^s}\leq C\|\nabla u(t)\|_{L^\infty_{xy}}\|u(t)\|^2_{H^s}.\label{4.35}
\end{align}
Now, we integrate \eqref{4.35} in $[0,t]$ to obtain
\begin{align*}
 \|u(t)\|^2_{H^s}&\leq \|u_0\|^2_{H^s}+C\int_0^t \|\nabla u(t')\|_{L^\infty_{xy}}\|u(t')\|^2_{H^s}dt'\leq \|u_0\|^2_{H^s}+C\|u\|^2_{L^\infty_T H^s_{xy}}\int_0^t \|\nabla u(t')\|_{L^\infty_{xy}}dt'\\
&\leq \|u_0\|^2_{H^s}+C\|\nabla u\|_{L^1_T L^\infty_{xy}} \|u\|^2_{L^\infty_T H^s_{xy}},
\end{align*}
for all $t\in[0,T]$. Then, \eqref{4.34} follows from the last inequality.
\end{proof}

\section{Estimates for the norm $\|u\|_{L_T^1W_{x,y}^{1,\infty}(\mathbb R^2)}$ }

We will derive an a priori estimate for the norm
\begin{align}
f(T):=\|u\|_{L^1_T L^\infty_{xy}}+\|\nabla u\|_{L^1_T L^\infty_{xy}}\label{4.37}
\end{align}
based on the refined Strichartz estimate proved in Lemma \ref{L4.11}. This a priori estimate is crucial to guarantee that the approximate solutions, used in the proof of the existence of solutions to the IVP \eqref{BO}, are defined in a common time interval $[0,T]$ (see Lemma \ref{L5.3}).

\begin{lemma}\label{ESN} Let $T>0$ and let $u\in C([0,T];H^\infty(\mathbb R^2))$ be a solution of the IVP \eqref{BO}. Then, for any $s>s_{\alpha}:=\frac32+\frac14(1-\alpha)$, there exist $k_s\in (1/2,1)$ and $C_s>0$ such that
\begin{align}
f(T)\leq C_s T^{k_s}(1+f(T)) \|u\|_{L^\infty_T H^s_{xy}}.\label{4.38}
\end{align}
\end{lemma}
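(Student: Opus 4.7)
The plan is to decompose $f(T)$ into its two pieces and estimate each separately, using Sobolev embedding for the pointwise piece and Lemma \ref{L4.11} for the gradient piece.

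For the first piece $\|u\|_{L^1_T L^\infty_{xy}}$, since $s>s_\alpha>1$ the two-dimensional Sobolev embedding $H^s(\mathbb R^2)\hookrightarrow L^\infty(\mathbb R^2)$ gives $\|u(t)\|_{L^\infty_{xy}}\lesssim \|u(t)\|_{H^s_{xy}}$, and integrating in time yields $\|u\|_{L^1_T L^\infty_{xy}}\leq T\,\|u\|_{L^\infty_T H^s_{xy}}$. Since we will ultimately take $k_s\in(1/2,1)$, on any bounded time range (the relevant regime for the applications in the following section) we may write $T\leq T^{k_s}$ at the price of absorbing a power of the upper bound of $T$ into the constant.

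For the gradient piece, view $u$ as a solution of the linear non-homogeneous equation \eqref{lineareq} with forcing $F=-u\partial_x u$, and apply Lemma \ref{L4.11}. Choose $\delta>0$ small enough that $s_\alpha+2\delta\leq s$, which is possible because $s>s_\alpha$. Estimates \eqref{4.25} and \eqref{4.25b} then give
$$\|\nabla u\|_{L^1_T L^\infty_{xy}}\leq C\,T^{k_\delta}\Big(\|u\|_{L^\infty_T H^s_{xy}}+\int_0^T\|u\partial_x u(\cdot,t)\|_{H^{s_\alpha-1+2\delta}_{xy}}\,dt\Big).$$
The nonlinear term is estimated via the Kato-Ponce product inequality (same source as \eqref{KP}) applied with $r:=s_\alpha-1+2\delta$:
$$\|u\partial_x u\|_{H^r_{xy}}\lesssim \|u\|_{L^\infty_{xy}}\|\partial_x u\|_{H^r_{xy}}+\|\partial_x u\|_{L^\infty_{xy}}\|u\|_{H^r_{xy}}.$$
Since $r+1=s_\alpha+2\delta\leq s$, both Sobolev norms on the right are controlled by $\|u\|_{H^s_{xy}}$, so integrating in $t\in[0,T]$ and recognising $f(T)$ on the right yields
$$\int_0^T\|u\partial_x u(\cdot,t)\|_{H^r_{xy}}\,dt\lesssim \big(\|u\|_{L^1_T L^\infty_{xy}}+\|\nabla u\|_{L^1_T L^\infty_{xy}}\big)\|u\|_{L^\infty_T H^s_{xy}}=f(T)\,\|u\|_{L^\infty_T H^s_{xy}}.$$

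Setting $k_s:=k_\delta\in(1/2,1)$ and combining the bounds for the two pieces of $f(T)$ produces
$$f(T)\leq C_s\,T^{k_s}\,\|u\|_{L^\infty_T H^s_{xy}}+C_s\,T^{k_s}\,f(T)\,\|u\|_{L^\infty_T H^s_{xy}}=C_s\,T^{k_s}(1+f(T))\,\|u\|_{L^\infty_T H^s_{xy}},$$
which is exactly \eqref{4.38}. The only delicate point is that $\delta$ has to simultaneously satisfy the Strichartz regularity requirement $s_\alpha+2\delta\leq s$ and provide enough room for the Kato-Ponce product rule applied at order $r=s_\alpha-1+2\delta$ to close within the available $H^s$ budget; the slack $s-s_\alpha>0$ makes one choice of $\delta$ suffice for both.
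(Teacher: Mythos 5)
Your argument is correct, and its core --- viewing $u$ as a solution of \eqref{lineareq} with $F=-u\partial_x u$, invoking Lemma \ref{L4.11} with $\delta$ chosen so that $s_{\alpha}+2\delta\leq s$, and closing the nonlinear term with a product estimate to recognize $f(T)$ --- is exactly the paper's strategy for the gradient piece, which is the substantial part of the lemma. You deviate in two places. First, for the nonlinear term the paper does not use the two-dimensional Kato-Ponce product rule; it splits $\|\cdot\|_{H^{s_{\alpha}-1+2\delta}}$ into $\|\cdot\|_{L^2}+\|D_x^{s_{\alpha}-1+2\delta}\cdot\|_{L^2}+\|D_y^{s_{\alpha}-1+2\delta}\cdot\|_{L^2}$ and applies the one-dimensional Kenig-Ponce-Vega fractional Leibniz rule \eqref{4.6} separately in $x$ and in $y$; both routes give $\int_0^T\|u\partial_x u\|_{H^{s_{\alpha}-1+2\delta}}\,dt\lesssim f(T)\|u\|_{L^\infty_T H^s_{xy}}$, and yours is arguably the cleaner one. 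Second, and more substantively, for $\|u\|_{L^1_TL^\infty_{xy}}$ the paper does not use the Sobolev embedding: it writes $u$ via the Duhamel formula \eqref{Duhamel} and applies the Strichartz bound \eqref{lessreg} to both terms, which yields a genuine factor $T^{k_\delta}$ with a constant independent of $T$ (at the cost of another term $f(T)\|u\|_{L^\infty_TH^s_{xy}}$ coming from the nonlinearity). Your embedding argument produces a factor $T$ rather than $T^{k_s}$, and the conversion $T\leq C\,T^{k_s}$ forces $C$ to depend on an a priori upper bound for $T$; since the lemma is invoked in Lemma \ref{L5.3} only with $T<1$, and the apparent circularity in the definition of $T$ there is broken by fixing the bound $T\leq1$ in advance, this costs nothing in the application, but it does mean your constant $C_s$ is not uniform over all $T>0$ as the statement, read with $C_s$ depending only on $s$, requires. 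If you want the statement verbatim, replace the embedding step by the Duhamel--Strichartz argument.
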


\begin{proof}
Let us fix a constant $\delta_0$ such that $0<\delta_0<s-s_{\alpha}$. We first estimate $\|\nabla u\|_{L^1_TL^\infty_{xy}}$. Taking $F:=-u\partial_x u$ in equations \eqref{4.25} and \eqref{4.25b}, from Lemma \ref{L4.11} we get
\begin{align}
\|\nabla u\|_{L^1_T L^\infty_{xy}}\leq c_\delta T^{k_\delta}\left(\|u\|_{L^\infty_T H^{s_{\alpha}+2\delta}_{xy}}+\int_0^T \|u(t)\partial_x u(t)\|_{H^{s_{\alpha}-1+2\delta}_{xy}}dt\right),\label{4.39}
\end{align}
where $\delta$ is such that $0<\delta<\delta_0$ and will be determined during the proof.\\

Let us bound each one of the terms in the right hand side of \eqref{4.39}. 

By choosing $\delta>0$ such that $0<\delta<\delta_0/2$, it is clear that $s_{\alpha}+2\delta<s_{\alpha}+\delta_0<s$ and in consequence
\begin{align}
\|u\|_{L^\infty_T H^{s_{\alpha}+2\delta}_{xy}}\leq\|u\|_{L^\infty_T H^s_{xy}}.\label{4.41}
\end{align}

On the other hand
\begin{align*}
\int_0^T \|u(t)\partial_x u(t)\|_{H^{s_{\alpha}-1+2\delta}_{xy}} dt\lesssim&\int_0^T \|u(t)\partial_x u(t)\|_{L^2_{xy}} dt+\int_0^T \|D_x^{s_{\alpha}-1+2\delta}(u(t)\partial_x u(t))\|_{L^2_{xy}}dt\\
&+\int_0^T \|D_y^{s_{\alpha}-1+2\delta}(u(t)\partial_x u(t))\|_{L^2_{xy}}dt\equiv I_1+I_2+I_3.
\end{align*}
Let us estimate $I_1$:
\begin{align}
I_1\leq \sup_{t\in[0,T]} \|u(t)\|_{L^2}\int_0^T \|\partial_x u(t)\|_{L^\infty_{xy}}dt\leq f(T) \|u\|_{L^\infty_T L^2_{xy}}\leq f(T)\|u\|_{L^\infty_T H^s_{xy}}.\label{4.45}
\end{align}
Now we estimate $I_2$:\\

 The fractional Leibniz rule, proved in \cite{KPV1993}, says that for $\sigma\in(0,1)$, it holds that
\begin{align}
\|D_x^\sigma(fg)\|_{L^2(\mathbb R)}\lesssim \|D_x^\sigma f\|_{L^{p_1}(\mathbb R)}\|g\|_{L^{q_1}(\mathbb R)}+\|D_x^\sigma g\|_{L^{p_2}(\mathbb R)}\|f\|_{L^{q_1}(\mathbb R)},\label{4.6}
\end{align}
with
$$\dfrac1{p_i}+\dfrac1{q_i}=\dfrac12,\quad 1<p_i,q_i\leq \infty,\quad i=1,2.$$
Taking in \eqref{4.6} $\sigma:=s_{\alpha}-1+2\delta\in(0,1)$ ($\delta>0$ small enough), and $p_1=p_2:=2$, $q_1=q_2:=\infty$  we have that
\begin{align}
\notag I_2=&\int_0^T \left[\int_{\mathbb R_{y}} \|D_x^{s_{\alpha}-1+2\delta}(u(t)\partial_x u(t))(\cdot_x,y)\|^2_{L^2(\mathbb R_x)} dy\right]^{1/2}dt\\
\notag \lesssim &\int_0^T  \left[ \int_{\mathbb R_y} \left(\|D_x^{s_{\alpha}-1+2\delta}\partial_xu(t)(\cdot_x,y)\|^2_{L^2(\mathbb R_x)}\|u(t)(\cdot_x,y)\|^2_{L^\infty(\mathbb R_x)}\right.\right.\\
\notag&\left.\left.\hspace{2cm}+\|\partial_x u(t)(\cdot_x,y)\|^2_{L^\infty(\mathbb R_x)}\|D_x^{s_{\alpha}-1+2\delta}u(t)(\cdot_x,y)\|^2_{L^2(\mathbb R_x)} \right)dy\right]^{1/2}dt\\
\notag\lesssim& \int_0^T \left[ \|u(t)\|^2_{L^\infty_{xy}}\|D_x^{s_{\alpha}-1+2\delta}\partial_x u(t)\|^2_{L^2_{xy}} +\|\partial_x u(t)\|^2_{L^\infty_{xy}} \|D_x^{s_{\alpha}-1+2\delta}u(t)\|^2_{L^2_{xy}}\right]^{1/2}dt\\
\notag\lesssim& \int_0^T \left[ \|u(t)\|_{L^\infty_{xy}}\|D_x^{s_{\alpha}-1+2\delta}\partial_x u(t)\|_{L^2_{xy}} +\|\partial_x u(t)\|_{L^\infty_{xy}} \|D_x^{s_{\alpha}-1+2\delta}u(t)\|_{L^2_{xy}}\right]dt\\
\lesssim & \|u\|_{L^\infty_T H_{xy}^s}\left(\|u\|_{L^1_T L^\infty_{xy}} +\|\partial_x u\|_{L^1_T L^\infty_{xy}}\right)\leq f(T)\|u\|_{L^\infty_T H^s_{xy}}.\label{4.46}
\end{align}

In a similar way, we obtain
\begin{align}
I_3\lesssim f(T)\|u\|_{L^\infty_T H^s_{xy}}.\label{4.47}
\end{align}

From \eqref{4.39}, \eqref{4.41}, \eqref{4.45}, \eqref{4.46} and \eqref{4.47} we conclude that
\begin{align}
\|\nabla u\|_{L^1_T L^\infty_{xy}}\leq C_\delta T^{k_\delta}(1+f(T))\|u\|_{L^\infty_T H^s_{xy}}.\label{4.48}
\end{align}

In order to estimate the norm $\|u\|_{L^1_T L^\infty_{xy}}$ we observe that $u$ is a solution of the integral equation
\begin{align}
u(t)=U_{\alpha}(t)u_0+\int_0^t U_{\alpha}(t-t')(u(t')\partial_xu(t')) dt'.\label{Duhamel}
\end{align}
 Then, using Hölder's inequality in time and \eqref{lessreg} we have that
\begin{align*}
\|u\|_{L^1_T L^\infty_{xy}}\leq& \|U_{\alpha}(\cdot_t)u_0\|_{L^1_T L^\infty_{xy}}+\int_0^T\|U_{\alpha}(\cdot_t-t')(u(t')\partial_x u(t'))\|_{L^1_T L^\infty_{xy}}dt'\\
\leq & T^{1/2} \|U_{\alpha}(\cdot_t)u_0\|_{L^2_T L^\infty_{xy}}+T^{1/2}\int_0^T \|U_{\alpha}(._t-t')(u(t')\partial_x u(t'))\|_{L^2_T L^\infty_{xy}} dt'\\
\leq&C_\delta T^{1/2} T^{\tilde k_\delta}\|u_0\|_{H^{1/4(1-\alpha)+\delta}(\mathbb R^2)}+C_\delta T^{1/2}T^{\tilde k_\delta}\int_0^T \|u(t')\partial_x u(t')\|_{H^{1/4(1-\alpha)+\delta}(\mathbb R^2)}dt'\\
\lesssim& T^{k_\delta}\|u_0\|_{H^s}+T^{k_\delta}\int_0^T \|u(t')\partial_x u(t')\|_{H^{1/4(1-\alpha)+\delta}(\mathbb R^2)}dt'\\
\leq&T^{k_\delta} \|u\|_{L^\infty_T H^s_{xy}}+T^{k_\delta}\int_0^T \|u(t')\partial_x u(t')\|_{H^{1/4(1-\alpha)+\delta}(\mathbb R^2)}dt'\,,
\end{align*}

where $k_\delta:=\dfrac12+\tilde k_\delta\in(1/2,1)$.

Proceeding as it was done to estimate
$$\int_0^T\|u(t') \partial_x u(t')\|_{H^{s_{\alpha}-1+2\delta} }dt',$$
we can prove that
$$\int_0^T\|u(t')\partial_x u(t')\|_{H^{1/4(1-\alpha)+\delta}(\mathbb R^2)}\lesssim f(T)\|u\|_{L^\infty_T H^s_{xy}}.$$
Hence
\begin{align}
\|u\|_{L^1_T L^\infty_{xy}}\lesssim T^{k_\delta} (1+f(T))\|u\|_{L^\infty_T H^s_{xy}}.\label{4.63}
\end{align}
From \eqref{4.48} and \eqref{4.63} it follows \eqref{4.38}.
\end{proof}

\section{Proof of Theorem \ref{T4.1}}

Using the abstract theory, developed by Kato in \cite{Ka1975} and \cite{Ka1979}, to prove LWP of quasi-linear evolution equations, it can be established the following result of LWP of the IVP \eqref{BO} for initial data in $H^s(\mathbb R^2)$, with $s>2$. 

\begin{lemma}\label{L5.2} Let $s>2$ and $u_0\in H^s(\mathbb R^2)$. There exist a positive time $T=T(\|u_0\|_{H^s})$ and a unique solution of the IVP \eqref{BO} in the class
$$C([0,T];H^s(\mathbb R^2))\cap C^1([0,T]; L^2(\mathbb R^2)).$$
Moreover, for any $0<T'<T$, there exists a neighborhood $\mathcal U$ of $u_0$ in $H^s(\mathbb R^2)$ such that the flow map datum-solution
\begin{align*}
S^s_{T'}:\mathcal U&\to C([0,T'];H^s(\mathbb R^2))\\
v_0&\mapsto v,
\end{align*}
is continuous.
\end{lemma}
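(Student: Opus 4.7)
The plan is to apply Kato's abstract theory for quasi-linear evolution equations (\cite{Ka1975}, \cite{Ka1979}) directly, recasting the IVP \eqref{BO} as
\[
\partial_t u + A(u)u = 0,\qquad u(0)=u_0,
\]
where $A(u) := L + u\partial_x$ and $L := D_x^\alpha \partial_x + \mathcal{H}\partial_{yy}$. I choose the pair of spaces $X := L^2(\mathbb{R}^2)$ and $Y := H^s(\mathbb{R}^2)$, with isometric isomorphism $S := J^s = (1-\Delta)^{s/2}$ between $Y$ and $X$. Kato's theorem then reduces the problem to verifying a short list of structural conditions on the family $\{A(v)\}_{v\in W}$, where $W$ ranges over a bounded ball in $Y$.

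Next I would verify those hypotheses in order. First, \textbf{dissipativity/semigroup generation}: the operator $L$ is skew-adjoint on $L^2(\mathbb{R}^2)$ (as shown in \eqref{SA}), and the multiplication-by-$v\partial_x$ part has antisymmetric part $-\tfrac12 v_x$ (as a multiplication operator), so an integration by parts gives
\[
\mathrm{Re}\,(A(v)f,f)_{L^2} = -\tfrac12\int v_x|f|^2\,dxdy \geq -\tfrac12\|v_x\|_{L^\infty}\|f\|_{L^2}^2,
\]
hence $A(v) \in G(L^2,1,\beta_v)$ with $\beta_v \lesssim \|\nabla v\|_{L^\infty} \lesssim \|v\|_Y$ since $s>2$ gives $H^{s-1}\hookrightarrow L^\infty$. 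Second, \textbf{boundedness and Lipschitz of $A(v)$ from $Y$ to $X$}: since $L:Y\to X$ is bounded and $\|(u-v)\partial_x w\|_{L^2} \leq \|u-v\|_{L^\infty}\|\partial_x w\|_{L^2} \lesssim \|u-v\|_X^{\theta}\|u-v\|_Y^{1-\theta}\|w\|_Y$, the required Lipschitz estimate $\|A(u)-A(v)\|_{Y\to X}\lesssim \|u-v\|_X$ (or the weaker version sufficient for Kato's theorem) follows. Third, and most importantly, the \textbf{conjugation condition}: I need $SA(v)S^{-1} - A(v) = [J^s, v\partial_x]J^{-s}$ to be bounded on $X$ with norm controlled by $\|v\|_Y$. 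Since $L$ commutes with $J^s$, this reduces to the commutator $[J^s, v]\partial_x J^{-s}$, which by the Kato-Ponce inequality \eqref{KP} is estimated by
\[
\|[J^s,v]\partial_x J^{-s}g\|_{L^2} \lesssim \|\nabla v\|_{L^\infty}\|J^{s-1}\partial_x J^{-s}g\|_{L^2} + \|J^s v\|_{L^2}\|\partial_x J^{-s}g\|_{L^\infty}.
\]

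The \emph{main obstacle}, and the reason behind the restriction $s>2$, is controlling the second term above: one needs $\partial_x J^{-s}:L^2(\mathbb{R}^2)\to L^\infty(\mathbb{R}^2)$ to be bounded, which by Sobolev embedding on $\mathbb{R}^2$ requires $s-1>1$, i.e., $s>2$. The first term is handled by the same Sobolev embedding. With the three conditions in hand, Kato's theorem produces, for $u_0\in H^s$, a time $T=T(\|u_0\|_{H^s})>0$ and a unique solution $u \in C([0,T];H^s)\cap C^1([0,T];L^2)$, together with continuous dependence of the flow map $S^s_{T'}$ on a neighborhood $\mathcal U$ of $u_0$ for every $0<T'<T$, yielding Lemma \ref{L5.2}. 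The derivative-loss between $Y$ and $X$ (one derivative) is exactly the order of the nonlinear term $u\partial_x u$, so nothing finer than Kato's framework is needed here; the refined Strichartz analysis of Lemma \ref{L4.11} is reserved for the improvement down to $s>s_\alpha$ in Theorem \ref{T4.1}.
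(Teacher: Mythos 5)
Your proposal is correct and follows essentially the same route as the paper: casting \eqref{BO} in Kato's quasi-linear framework with $X=L^2(\mathbb R^2)$, $Y=H^s(\mathbb R^2)$, $S=J^s$, and verifying the conjugation condition via the Kato--Ponce commutator estimate \eqref{KP}, with the Sobolev embedding $H^{s-1}(\mathbb R^2)\hookrightarrow L^\infty(\mathbb R^2)$ accounting for the restriction $s>2$. You in fact spell out the semigroup-generation and Lipschitz hypotheses in slightly more detail than the paper, which simply calls them standard.
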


Next we will describe the abstract theorem of Kato  and how the IVP \eqref{BO}, considered by us, satisfies the hypotheses of this abstract theorem  when the initial data belong to $H^s(\mathbb R^2)$ with $s>2$. In other words, we will show how to prove Lemma \ref{L5.2} using Kato's theory.\\

Let $X$, $Y$ be reflexive and separable Banach spaces such that $Y\subset X$ with continuous and dense immersion. Suppose that there is a surjective isometry $S:Y\longrightarrow X$. Let $A$ be a function defined on $Y$ and with values in the set of linear operators in $X$. We say that the system $(X,Y,A,S)$ is admissible for the equation
\begin{align}
\frac{du}{dt}+A(u(t))u(t)=0\,,\label{QE}
\end{align}
if, for $R>0$, there are positive constants $\beta(R)$, $\lambda(R)$, $\mu(R)$, $\nu(R)$, $\rho(R)$, called the parameters of the system, such that, if $\|v\|_Y<R$, and $\|w\|_Y<R$, then:\\

(i) $-A(w)$ is the generator of a $C_0$-semigroup $\{e^{-tA(w)}\}_{t\geq0}$ in $X$ satisfying
$$\|e^{-tA(w)}\|_{BL(X)}\leq e^{\beta(R)t}\quad\forall t\geq0\,;$$
(ii) $Y\subset D(A(w))$ (the domain of $A(w)$), $A(w)|_Y\in B(Y;X)$ (the space of bounded linear operators from $Y$ to $X$), and
$$\|A(w)|_Y\|_{B(Y;X)}\leq\lambda(R)\quad\text{and}\quad\|(A(w)-A(v))|_Y\|_{B(Y;X)}\leq\mu(R)\|w-v\|_X\,;$$
(iii) $SA(w)S^{-1}-A(w)$ admits an extension $B(w)\in B(X)$ which satisfies
$$\|B(w)\|_{B(X)}\leq\nu(R)\quad\text{and}\quad\|B(w)-B(v)\|_{B(X)}\leq\rho(R)\|w-v\|_Y\,.$$
\begin{Theorem}\label{Kato} (see \cite{Ka1975} and \cite{Ka1979}) If the system $(X,Y,A,S)$ is admissible for the evolution equation \ref{QE}, then, given $u_0\in Y$, there exists $T=T(\|u_0\|_Y)$ such that the IVP 
\begin{align}
\left. \begin{array}{rl}
\dfrac{du}{dt}+A(u(t))u(t)&\hspace{-2mm}=0,\\
u(0)&\hspace{-2mm}=u_0,
\end{array} \right\}\label{QE1}
\end{align}
has a unique solution $u\in C([0,T];Y)\cap C^1([0,T];X)$.\\
Besides, if $u\in C([0,T_{\max});Y)\cap C^1([0,T_{\max});X)$ is the unique noncontinuable solution of \eqref{QE1} and $0<T<T_{\max}$, then there exists a neighborhood $V$ of $u_0$ in $Y$ such that the map $\tilde{u_0}\mapsto \tilde{u}$ is continuous from $V$ to $C([0,T];Y)$.
\end{Theorem}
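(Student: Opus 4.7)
The plan is a Picard-type linearization scheme. Given $u_0\in Y$, set $u^{(0)}(t)\equiv u_0$ and, inductively, define $u^{(n+1)}$ as the solution of the \emph{linear} non-autonomous Cauchy problem obtained by freezing the quasilinear coefficient,
\[
\frac{du^{(n+1)}}{dt}+A(u^{(n)}(t))u^{(n+1)}(t)=0,\qquad u^{(n+1)}(0)=u_0.
\]
Existence of $u^{(n+1)}\in C([0,T];X)$ follows from Kato's linear theory of evolution systems $\{U_n(t,s)\}$ associated with the family $\{A(u^{(n)}(t))\}_{t\in[0,T]}$: stability and the $C_0$-semigroup bound are supplied by condition (i), while the time-regularity of $t\mapsto A(u^{(n)}(t))$ needed to apply that linear theory comes from condition (ii) together with $u^{(n)}\in C([0,T];X)$.

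The first task is to obtain uniform $Y$-bounds on a common interval. This is the heart of the matter and relies essentially on condition (iii). Setting $v^{(n)}:=Su^{(n)}$ and conjugating the iterated equation by $S$ yields
\[
\frac{dv^{(n+1)}}{dt}+A(u^{(n)})v^{(n+1)}=-B(u^{(n)})v^{(n+1)},
\]
so that $v^{(n+1)}(t)=U_n(t,0)Su_0-\int_0^t U_n(t,s)B(u^{(n)}(s))v^{(n+1)}(s)\,ds$. Combining the $X$-semigroup estimate from (i) with $\|B(u^{(n)})\|_{B(X)}\le\nu(R)$ and Gronwall gives
\[
\|u^{(n+1)}(t)\|_Y=\|v^{(n+1)}(t)\|_X\le e^{(\beta(R)+\nu(R))t}\|u_0\|_Y.
\]
Choosing $R:=2\|u_0\|_Y$ and then $T=T(\|u_0\|_Y)$ so small that $e^{(\beta(R)+\nu(R))T}\le 2$ closes the induction and yields $\sup_n\|u^{(n)}\|_{L^\infty([0,T];Y)}\le R$.

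Next I would prove convergence in $X$: subtracting consecutive iterations and invoking the Lipschitz bound of condition (ii), $\|(A(w)-A(v))|_Y\|_{B(Y;X)}\le\mu(R)\|w-v\|_X$, together with the uniform $Y$-bound just obtained, a Gronwall argument shows that $\{u^{(n)}\}$ is Cauchy in $C([0,T];X)$, after possibly shrinking $T$. The limit $u$ belongs to $L^\infty([0,T];Y)\cap C([0,T];X)$, is weakly continuous into $Y$, and solves \eqref{QE1} in $C^1([0,T];X)$.

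The hard part will be upgrading weak $Y$-continuity to strong $Y$-continuity, i.e.\ showing $u\in C([0,T];Y)$. The standard route is to observe that $Su$ satisfies the same perturbed equation as above, this time with limit coefficient $A(u)$, and to propagate $u$ forward and backward in time via the limiting evolution system; the weak lower-semicontinuity $\|u(t)\|_Y\le\liminf_n\|u^{(n)}(t)\|_Y$ combined with the reverse inequality coming from this propagation yields $\lim_{t\to t_0}\|u(t)\|_Y=\|u(t_0)\|_Y$, which together with weak continuity forces strong continuity in $Y$. Uniqueness then follows by subtracting two $Y$-solutions and estimating their difference in $X$ through condition (ii) and Gronwall, and continuous dependence on the datum is obtained by running the same iteration scheme for initial data $\tilde u_0$ in a neighborhood of $u_0$, passing to the limit in $X$ with uniform $Y$-bounds, and recovering $Y$-continuity by the same semicontinuity argument.
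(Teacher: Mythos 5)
This theorem is not proved in the paper at all: it is quoted verbatim from Kato's papers \cite{Ka1975} and \cite{Ka1979} as a black box, and the authors only verify that their IVP satisfies the admissibility hypotheses. So the relevant comparison is with Kato's classical argument, and your proposal does reconstruct its main architecture correctly: linearize by freezing the coefficient, invoke the linear theory of evolution systems $U_n(t,s)$ (condition (i) gives stability of the family with constants $(1,\beta(R))$, condition (ii) gives the norm-continuity of $t\mapsto A(u^{(n)}(t))|_Y$ needed for that theory), use condition (iii) and the conjugation $v^{(n)}=Su^{(n)}$ to get the uniform $Y$-bound on a common interval, and use the Lipschitz estimate in (ii) to contract in the weaker $C([0,T];X)$ metric. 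That is exactly Kato's scheme.

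There are, however, two places where your sketch as written would not go through. First, the upgrade from weak to strong $Y$-continuity: you propose to ``propagate $u$ forward and backward in time,'' but backward propagation is unavailable -- $-A(w)$ generates only a forward $C_0$-semigroup, so the inequality $\|u(t)\|_Y\leq e^{\gamma(t-s)}\|u(s)\|_Y$ for $t\geq s$ gives right-continuity of $t\mapsto\|u(t)\|_Y$ but says nothing about $\limsup_{s\to t_0^-}\|u(s)\|_Y\leq\|u(t_0)\|_Y$. Moreover, even granting norm continuity, ``weak convergence plus convergence of norms implies strong convergence'' requires the Kadec--Klee property, which a general reflexive separable $Y$ need not have (it is fine for $Y=H^s$, but the theorem is stated abstractly). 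Kato's actual mechanism is different: one shows $Su(t)=W(t,s)Su(s)$, where $W$ is the evolution system for the bounded perturbation $A(u(\cdot))+B(u(\cdot))$, and strong continuity of $W$ in both variables gives $Su\in C([0,T];X)$ directly. Second, continuous dependence of the flow map into $C([0,T];Y)$ is the genuinely delicate part of the theorem and is not obtained by ``running the same iteration'': the iteration only yields Lipschitz dependence in the $X$-norm. Passing from that to continuity in $Y$, uniformly in $t$, requires a separate argument (Kato's norm-convergence argument via the evolution-system representation, or a Bona--Smith-type regularization), and your one-sentence appeal to ``the same semicontinuity argument'' does not supply it. These are exactly the points where the cited references spend most of their effort.
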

To IVP \eqref{BO} we associate the abstract IVP \eqref{QE1} by defining
$$A(v)u:=D_x^{\alpha} u_x +\mathcal Hu_{yy}+v u_x\,.$$
Let $s>2$ and $S:=(1-\Delta)^{s/2}\equiv J^s$. Then the system $(L^2(\mathbb R^2), H^s(\mathbb R^2), A, S)$ is admissible for the quasilinear equation in IVP \eqref{QE1}. Properties (i) and (ii) can be easily verified by using standard methods of the theory of linear semigroups. With respect to condition (iii), we can use the Kato-Ponce's commutator inequality \eqref{KP}. In fact, taking into account that $s>2$, we have that $H^{s-1}(\mathbb R^2)\hookrightarrow L^{\infty}(\mathbb R^2)$, and in consequence
\begin{align}
\notag\|(SA(w)S^{-1}-&A(w))u\|_{L^2(\mathbb R^2)}=\|[J^s,w]S^{-1}u_x\|_{L^2(\mathbb R^2)}\\
\notag&\leq C(\|\nabla w\|_{L^{\infty}(\mathbb R^2)}\|J^{s-1}S^{-1}u_x\|_{L^2(\mathbb R^2)}+\|J^sw\|_{L^2(\mathbb R^2)}\|S^{-1}u_x\|_{L^{\infty}(\mathbb R^2)})\\
\notag&\leq C(\|\nabla w\|_{H^{s-1}(\mathbb R^2)}\|u\|_{L^{2}(\mathbb R^2)}+\|w\|_{H^{s}(\mathbb R^2)}\|S^{-1}u_x\|_{H^{s-1}(\mathbb R^2)})\\
&\leq C\|w\|_{H^{s}(\mathbb R^2)}\|u\|_{L^{2}(\mathbb R^2)}\,,\label{KP1}
\end{align}
which implies (iii) with $\nu(R)=CR$ and $\rho(R)=C$, for some constant $C$. In this manner Lemma \ref{L5.2} has been proved.\\

Lemma \ref{L5.2} will allow us to apply the Bona-Smith method for proving the existence part of Theorem \ref{T4.1}.\\

\subsection{A Priori Estimates}

Let $u_0\in H^\infty(\mathbb R^2)$ and $u$ the solution of the IVP \eqref{BO} of Lemma \ref{L5.2}. Then $u\in C([0,T^*);H^\infty(\mathbb R^2))$, where $T^*$ is the maximal time of existence of $u$ satisfying $T^*\geq T(\|u_0\|_{H^3})$. We have either $T^*=+\infty$ or, if $T^*<\infty$,
\begin{align}
\lim_{t\to T^*} \|u(t)\|_{H^3}=+\infty.\label{5.2}
\end{align}

\begin{lemma}\label{L5.3} Let $s\in(s_{\alpha},3)$, $C_s$ the constant in \eqref{4.38}, $C_0$ the constant in \eqref{4.34}, $A_s:=8(1+C_0)C_s$ and $T:=(A_s\|u_0\|_{H^s}+1)^{-2}$. Then $T<T^*$, 
\[\|u\|_{L^\infty_T H^s_{xy}}\leq 2\|u_0\|_{H^s_{xy}}\quad\text{and}\quad f(T)=\|u\|_{L^1([0,T];W_{xy}^{1,\infty}(\mathbb R^2))}\leq \dfrac 83 C_s\|u_0\|_{H^s_{xy}}.\]
\end{lemma}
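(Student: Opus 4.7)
The proof is a continuity (bootstrap) argument combining the Strichartz-type estimate \eqref{4.38} with the energy estimate \eqref{4.34}, closed via the $H^3$ blow-up criterion \eqref{5.2}. Since $u_0\in H^\infty(\mathbb R^2)$, one has $u\in C([0,T^*);H^\infty(\mathbb R^2))$, so $F(t):=\|u\|_{L_t^\infty H_{xy}^s}$ and $f(t)$ (as defined in \eqref{4.37}) are continuous and non-decreasing in $t$, with $F(0)=\|u_0\|_{H^s}$ and $f(0)=0$. Set
$$\tau:=\sup\{t\in[0,\min(T,T^*)):F(t)\leq 2\|u_0\|_{H^s}\};$$
since $F(0)<2\|u_0\|_{H^s}$ (the case $u_0\equiv 0$ being trivial), $\tau>0$.

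On $[0,\tau]$ the bootstrap hypothesis $F(t)\leq 2\|u_0\|_{H^s}$ inserted into \eqref{4.38} gives $f(t)\leq 2C_s t^{k_s}(1+f(t))\|u_0\|_{H^s}$. Writing $a(t):=2C_s t^{k_s}\|u_0\|_{H^s}$ and using that $T\leq 1$, $k_s>1/2$, and $T^{1/2}=(A_s\|u_0\|_{H^s}+1)^{-1}$, one finds
$$a(t)\leq\frac{2C_s\|u_0\|_{H^s}}{A_s\|u_0\|_{H^s}+1}\leq\frac{2C_s}{A_s}=\frac{1}{4(1+C_0)}\leq\frac14,$$
so that $f(t)\leq a(t)/(1-a(t))\leq \tfrac43 a(t)\leq \tfrac{8}{3}C_s t^{k_s}\|u_0\|_{H^s}\leq \tfrac{8}{3}C_s\|u_0\|_{H^s}$. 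Feeding $\|\nabla u\|_{L_t^1 L_{xy}^\infty}\leq f(t)$ into \eqref{4.34} and repeating the smallness bookkeeping produces
$$C_0 f(t)\leq \frac{C_0}{3(1+C_0)}<\frac13,\qquad F(t)^2\leq\frac{\|u_0\|_{H^s}^2}{1-C_0 f(t)}\leq\frac32\|u_0\|_{H^s}^2,$$
i.e.\ the \emph{strict} improvement $F(t)\leq\sqrt{3/2}\,\|u_0\|_{H^s}<2\|u_0\|_{H^s}$ throughout $[0,\tau]$. Continuity then prevents $\tau$ from lying in the interior of $[0,\min(T,T^*))$, so $\tau=\min(T,T^*)$.

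It remains to exclude $T^*\leq T$. If this held, one would have $\tau=T^*$ and thus $\int_0^t\|\nabla u(t')\|_{L_{xy}^\infty}dt'\leq f(t)\leq \tfrac{8}{3}C_s\|u_0\|_{H^s}$ uniformly for $t\in[0,T^*)$. Applying \eqref{4.35} at the higher regularity $s'=3$ (the derivation is valid for any positive Sobolev index) together with Gronwall's inequality would give
$$\|u(t)\|_{H^3}^2\leq \|u_0\|_{H^3}^2\exp\!\bigl(\tfrac{8C}{3}C_s\|u_0\|_{H^s}\bigr)\qquad\text{for all }t\in[0,T^*),$$
contradicting \eqref{5.2}. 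Hence $T<T^*$, $\tau=T$, and the two announced bounds follow.

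The delicate point is the tuning of $A_s=8(1+C_0)C_s$ together with the exponent $-2$ in $T=(A_s\|u_0\|_{H^s}+1)^{-2}$: it is precisely this calibration that allows the two self-referential inequalities produced by \eqref{4.38} and \eqref{4.34} to close with \emph{strict} improvements (the factors $\tfrac43$ and $\sqrt{3/2}<2$), and then permits the resulting $L_T^1 L_{xy}^\infty$ control on $\nabla u$ to be upgraded, via Gronwall at the $H^3$ level, to the bound needed to rule out finite-time blow-up on $[0,T]$.
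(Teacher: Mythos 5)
Your proof is correct and follows essentially the same route as the paper: a continuity/bootstrap argument on the quantity $\|u\|_{L^\infty_t H^s}$ that feeds the Strichartz bound \eqref{4.38} and the energy estimate \eqref{4.34} into each other with exactly the same calibration of $A_s$ (yielding the factors $\tfrac43$, $\tfrac83 C_s$, and $\sqrt{3/2}<2$), and then rules out $T^*\leq T$ via the $H^3$ blow-up criterion \eqref{5.2}. The only cosmetic difference is that the paper phrases the bootstrap as a contradiction with $T_0=\sup A$ and closes the $H^3$ step with the a priori inequality \eqref{4.34} directly rather than Gronwall on \eqref{4.35}; both are equivalent.
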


\begin{proof} Let $A$ be the set $\{T'\in(0,T^*):\|u\|^2_{L^\infty_{T'} H^s}\leq 4\|u_0\|^2_{H^s}\}$. Since $u\in C([0,T^*);H^\infty(\mathbb R^2))$, the set $A$ is not empty. Let $T_0:=\sup A$. We will prove that $T_0>T$. We argue by contradiction by assuming that $0<T_0\leq T<1$. By continuity we have that $\|u\|^2_{L^\infty_{T_0}H^s}\leq 4\|u_0\|^2_{H^s}$. Let $k_s\in(\dfrac12,1)$ be the exponent in \eqref{4.38}. Then, using \eqref{4.38}, we have
\begin{align*}
f(T_0)&\leq C_s T_0^{k_s}(1+f(T_0))\|u\|_{L^\infty_{T_0}H^s}\leq C_s T_0^{k_s}(1+f(T_0))2\|u_0\|_{H^s}\\
&\leq 2C_s T_0^{1/2}\|u_0\|_{H_s}(1+f(T_0))\leq 2 C_s\dfrac1{A_s\|u_0\|_{H^s}+1}\|u_0\|_{H^s}(1+f(T_0)).
\end{align*}
Hence
$$\left(1-\dfrac{2C_s\|u_0\|_{H^s}}{A_s \|u_0\|_s+1} \right) f(T_0)\leq \dfrac{2C_s\|u_0\|_{H^s}}{A_s\|u_0\|_{H^s}+1}.$$
Therefore
$$(A_s \|u_0\|_{H^s}+1-2C_s\|u_0\|_{H^s})f(T_0)\leq 2C_s\|u_0\|_{H^s},$$
i.e.,
$$(8(1+C_0)C_s\|u_0\|_{H^s}+1-2C_s\|u_0\|_{H^s})f(T_0)\leq 2C_s\|u_0\|_{H^s},$$
i.e.,
$$(6C_s\|u_0\|_{H^s}+8C_0C_s\|u_0\|_{H^s}+1)f(T_0)\leq 2C_s\|u_0\|_{H^s}.$$
In consequence
$$f(T_0)\leq \dfrac{2C_s\|u_0\|_{H^s}}{6C_s\|u_0\|_{H^s}+8C_0C_s\|u_0\|_{H^s}+1}\leq \dfrac{2C_s}{6C_s+8C_0C_s}=\dfrac1{3+4C_0}\leq \dfrac1{3C_0}.$$
We use the estimate \eqref{4.34} with $s=3$ to obtain
$$\|u\|^2_{L^\infty_{T_0} H^3}\leq \|u_0\|^2_{H^3}+C_0 \|\nabla u\|_{L^1_{T_0}L^\infty_{xy}}\|u\|^2_{L^\infty_{T_0} H^3}\leq \|u_0\|^2_{H^3}+C_0 \dfrac1{3C_0}\|u\|^2_{L^\infty_{T_0}H^3}.$$
This way,
$$\dfrac23 \|u\|^2_{L^\infty_{T_0}H^3}\leq \|u_0\|^2_{H^3},$$
i.e.,
$$\|u\|^2_{L^\infty_{T_0} H^3}\leq \dfrac32\|u_0\|^2_{H^3},$$
which implies, taking into account \eqref{5.2}, that $T_0<T^*$.\\

On the other hand, by using the energy estimate \eqref{4.34}, we have that
$$\|u\|^2_{L^\infty_{T_0}H^s}\leq \|u_0\|^2_{H^s}+C_0f(T_0)\|u\|^2_{L^\infty_{T_0}H^s}\leq \|u_0\|^2_{H^s}+\dfrac13\|u\|^2_{L^\infty_{T_0}H^s},$$
i.e.,
$$\|u\|^2_{L^\infty_{T_0}H^s}\leq\dfrac32\|u_0\|^2_{H^s},$$
and by continuity, for some $T'\in(T_0,T^*)$, we have that
$$\|u\|^2_{L^\infty_{T'}H^s}\leq 4\|u_0\|^2_{H^s},$$
i.e., there exists $T'>T_0$, with $T'\in A$. This contradicts the definition of $T_0$. Then we conclude that $T<T_0$ and thus $\|u\|_{L^\infty_{T}H^s}\leq 2\|u_0\|_{H^s}$.\\

Now we prove that $f(T)\leq \dfrac 83 C_s \|u_0\|_{H^s}$. From \eqref{4.38} it follows that
$$f(T)\leq C_s T^{k_s}(1+f(T))\|u\|_{L^\infty_{T}H^s}\leq 2C_sT^{k_s}(1+f(T))\|u_0\|_{H^s},$$
hence
$$(1-2C_s T^{k_s}\|u_0\|_{H^s})f(T)\leq 2C_s T^{k_s}\|u_0\|_{H^s}.$$
Let us observe that
\begin{align*}
2C_sT^{k_s}\|u_0\|_{H^s}=2C_s\dfrac1{(A_s\|u_0\|_{H^s}+1)^{2k_s}}\|u_0\|_{H^s}\leq \dfrac{2C_s\|u_0\|_{H^s}}{A_s\|u_0\|_{H^s}+1}\leq \dfrac{2C_s}{A_s}\leq \dfrac{2C_s}{8(1+C_0)C_s}<\dfrac14.
\end{align*}
Thus, since $T<1$,
$$\dfrac34 f(T)\leq 2C_s T^{k_s}\|u_0\|_{H^s}\leq 2C_s \|u_0\|_{H^s},$$
and in consequence
$$f(T)\leq \dfrac 83 C_s\|u_0\|_{H^s},$$
which completes the  proof of Lemma \ref{L5.3}.
\end{proof}

\subsection{Uniqueness }

Let $u_1$ and $u_2$ be two solutions of the fractional two-dimensional BO equation in $C([0,T];H^s(\mathbb R^2))\cap L^1((0,T);W^{1,\infty}_{xy}(\mathbb R^2))$ with $s>s_{\alpha}$, corresponding to initial data $\varphi_1$ and $\varphi_2$, respectively. Let
\begin{align}
K:=\max\{\|\nabla u_1\|_{L^1_TL^\infty_{xy}},\|\nabla u_2\|_{L^1_T L^\infty_xy}\},\label{5.4}
\end{align}
and $v:=u_1-u_2$. Then $v$ satisfies the differential equation
$$v_t+D_x^{\alpha} v_x +\mathcal Hv_{yy}+u_1 (u_1)_x-u_2 (u_2)_x=0,$$
i.e.,
\begin{align}
v_t+D_x^{\alpha} v_x +\mathcal Hv_{yy}+\dfrac12\partial_x((u_1+u_2)v)=0,\label{5.5}
\end{align}
and $v(0)=\varphi_1-\varphi_2$. Let us estimate $\|v(t)\|_{L^2_{xy}}$. We multiply each term in \eqref{5.5} by $v(t)$, integrate in $\mathbb R^2_{xy}$, use \eqref{SA} and integration by parts, to obtain
\begin{align*}
\int\dfrac{dv}{dt}v dx dy&=\dfrac12 \dfrac d{dt}\|v(t)\|^2_{L^2_{xy}},\\
\int (D_x^{\alpha} v_x +\mathcal Hv_{yy}) v&=0,\\
\int\dfrac12\partial_x((u_1+u_2)v)v&=-\dfrac12\int(u_1+u_2)v v_x=\dfrac14\int\partial_x(u_1+u_2)v^2 dxdy.
\end{align*}
Hence
$$\dfrac12\dfrac d{dt}\|v(t)\|^2_{L^2_{xy}}=-\dfrac 14\int \partial_x(u_1+u_2)v^2 dxdy \leq\dfrac14\|\partial_x (u_1+u_2)(t)\|_{L^\infty_{xy}}\|v(t)\|^2_{L^2_{xy}},$$
i.e.,
$$\dfrac d{dt}\|v(t)\|^2_{L^2_{xy}}\leq\dfrac12\|\partial_x(u_1+u_2)(t)\|_{L^\infty_{xy}} \|v(t)\|^2_{L^2_{xy}}.$$
Applying Gronwall's inequality we conclude that
\begin{align}
\|v(t)\|^2_{L^2_{xy}}\leq \|\varphi_1-\varphi_2\|^2_{L^2_{xy}} \exp\left(\int_0^t \dfrac12 \|\partial_x(u_1+u_2)(t')\|_{L^\infty_{xy}}dt'\right)\leq\|\varphi_1-\varphi_2\|^2_{L^2_{xy}}e^K.\label{5.6}
\end{align}
Estimative \eqref{5.6} implies the uniqueness in Theorem \ref{T4.1}, choosing $\varphi_1=\varphi_2=u_0$.

\subsection{Existence}

Let $s>s_{\alpha}$ and $u_0\in H^s(\mathbb R^2)$. We will use the Bona-Smith argument. Let us regularize the initial datum as follows. Let $\rho\in C^\infty_0(\mathbb R)$ such that $0\leq\rho\leq1$, $\rho(x)=0$ if $|x|>1$, and $\rho(x)=1$ if $|x|<1/2$. We define $\widetilde \rho(\xi,\eta):=\rho(\sqrt{\xi^2+\eta^2})$ and, for each $n\in\mathbb N$,
$$u_{0,n}(x,y):=\left\{\widetilde\rho\left(\dfrac\xi n,\dfrac \eta n\right)\widehat u_0 (\xi,\eta) \right\}^\vee(x,y).$$
It can be seen that, for each $n\in\mathbb N$, $u_{0,n}\in H^\infty(\mathbb R^2)$ and that  $u_{0,n}\overset{n\to\infty}\longrightarrow u_0$ in $H^s(\mathbb R^2)$. Now, for each $n\in\mathbb N$, we consider the solution $u_n$ of the IVP associated to the fractional two-dimensional BO equation with initial datum $u_{0,n}$, i.e. $u_n$ is the solution of the IVP
\begin{align*}
\left. \begin{array}{rl}
u_t+D_x^{\alpha} u_x +\mathcal Hu_{yy}+u \partial_x u &\hspace{-2mm}=0,\qquad\qquad (x,y)\in\mathbb R^2,\; t\geq 0,\\
u(x,y,0)&\hspace{-2mm}=u_{0,n}(x,y).
\end{array} \right\}
\end{align*}
The existence of the solutions $u_n$ is guaranteed by Lemma \ref{L5.2}.\\

From Lemma \ref{L5.3} we have that $u_n\in C([0,T_n];H^\infty(\mathbb R^2))$, where $T_n:=(A_s\|u_{0,n}\|_{H^s}+1)^{-2}$. Since $u_{0,n}\overset{n\to\infty}\longrightarrow u_0$ in $H^s(\mathbb R^2)$, there exists $N\in\mathbb N$ such that for all $n\geq N$, $T_n\geq (2A_s\|u_0\|_{H^s}+1)^{-2}$. In consequence, for all $n\geq N$, $u_n\in C([0,T];H^\infty(\mathbb R^2))$, where $T:=(2A_s\|u_0\|_{H^s}+1)^{-2}$. Without loss of generality we assume that, for each $n\in\mathbb N$, $u_n\in C([0,T];H^\infty(\mathbb R^2))$. Besides, from Lemma \ref{L5.3}, we can suppose that, for each $n\in\mathbb N$,
\begin{align}
\|u_n\|_{L^\infty_T H^s}\leq 4\|u_0\|_{H^s},\label{5.14}
\end{align}
and
\begin{align}
\|u_n\|_{L^1_TL^\infty_{xy}}+\|\nabla u_n\|_{L^1_T L^\infty_{xy}}\leq \dfrac 83C_s\|u_{0,n}\|_{H^s}\leq\dfrac{16}3 C_s\|u_0\|_{H^s}\equiv \widetilde K.\label{5.15}
\end{align}
From \eqref{5.14} it follows that the sequence $\{u_n\}$ is bounded in $L^\infty([0,T];H^s(\mathbb R^2))$. Therefore, there exist a subsequence of $\{u_n\}$, which we continue denoting by $\{u_n\}$, and a function $u\in L^\infty([0,T];H^s(\mathbb R^2))$ such that $u_n \overset *\rightharpoonup u$ in $L^\infty([0,T];H^s(\mathbb R^2))$, when $n\to\infty$ (weak $\ast$ convergence in $L^\infty([0,T];H^s(\mathbb R^2))$). We will prove that $u\in C([0,T];H^s(\mathbb R^2))\cap L^1([0,T];W^{1,\infty}_{xy}(\mathbb R^2))$ and that $u$ is the solution of the IVP \eqref{BO}.\\

First of all, let us see that $\{u_n\}$ is a Cauchy sequence in $C([0,T];L^2(\mathbb R^2))$. For $m\geq n\geq 1$ we define $v_{n,m}:=u_n-u_m$. Let us prove that
\begin{align}
\|v_{n,m}\|_{L^\infty_T L^2_{xy}}\leq e^{c\widetilde K}\|u_{0,n}-u_{0,m}\|=o(n^{-s}).\label{5.18}
\end{align}
It is clear that $v_{n,m}$ satisfies the differential equation
$$\partial_t v_{n,m}+(D_x^{\alpha} \partial_x +\mathcal H\partial_{yy})v_{n,m}+\dfrac12\partial_x((u_n+u_m)v_{n,m})=0.$$
We multiply this differential equation by $v_{n,m}(t)$ and integrate in $\mathbb R^2_{xy}$ to obtain
$$\dfrac12\dfrac d{dt}\|v_{n,m}(t)\|^2_{L^2}+\dfrac12\int_{\mathbb R^2}\partial_x((u_n+u_m)(t)v_{n,m}(t))v_{n,m}(t)dxdy=0,$$
i.e.,
$$\dfrac12\dfrac d{dt}\|v_{n,m}(t)\|^2_{L^2}+\dfrac14\int_{\mathbb R^2}\partial_x(u_n+u_m)(t)v_{n,m}^2(t)dxdy=0.$$
Hence
$$\dfrac d{dt}\|v_{n,m}(t)\|^2_{L^2}=-\dfrac12 \int_{\mathbb R^2}\partial_x(u_n+u_m)(t)v_{n,m}^2(t)dxdy\leq\dfrac12\|\partial_x(u_n+u_m)(t)\|_{L^\infty_{xy}}\|v_{n,m}(t)\|^2_{L^2}.$$
By Gronwall's inequality we conclude, for $t\in [0,T]$, that
\begin{align*}
\|v_{n,m}(t)\|_{L^2}^2&\leq \|v_{n,m}(0)\|^2_{L^2}\exp \left(\dfrac12\int_0^t \|\partial_x(u_n+u_m)(s)\|_{L^\infty_{xy}} ds \right)\\
&\leq \|u_{0,n}-u_{0,m}\|^2_{L^2}\exp \left(\dfrac12 \|\partial_x u_n\|_{L^1_T L^\infty_{xy}}+\dfrac12 \|\partial_x u_m\|_{L^1_T L^\infty_{xy}} \right)\leq \|u_{0,n}-u_{0,m}\|^2_{L^2}e^{\widetilde K},
\end{align*}
where $\widetilde K$ is the constant in \eqref{5.15}. This way, $\|v_{n,m}\|_{L^\infty_T L^2_{xy}}\leq e^{\widetilde K/2}\|u_{0,n}-u_{0,m}\|_{L^2}$. Estimation \eqref{5.18} follows if we prove that $\|u_{0,n}-u_{0,m}\|_{L^2}=o(n^{-s})$. In fact,
\begin{align*}
\|u_{0,n}-u_{0,m}\|^2_{L^2}&=\qquad\hspace{-.8cm}\underset{\frac n2\leq\sqrt{\xi^2+\eta^2}\leq m}\int \hspace{-.5cm}  \dfrac{\left| \rho\left(\frac{\sqrt{\xi^2+\eta^2}}n \right)-\rho\left(\frac{\sqrt{\xi^2+\eta^2}}m \right) \right|^2}{(1+\xi^2+\eta^2)^s} (1+\xi^2+\eta^2)^s |\widehat u_0(\xi,\eta)|^2 d\xi d\eta\\
&\leq C \qquad\hspace{-.8cm}\underset{\frac n2\leq\sqrt{\xi^2+\eta^2}\leq m}\int \dfrac1{\left(1+\frac{n^2}4\right)^s} (1+\xi^2+\eta^2)^s |\widehat u_0(\xi,\eta)|^2 d\xi d\eta\\
&\leq\dfrac{C}{n^{2s}}\qquad\hspace{-.8cm}\underset{\frac n2\leq\sqrt{\xi^2+\eta^2}\leq m}\int (1+\xi^2+\eta^2)^s |\widehat u_0(\xi,\eta)|^2 d\xi d\eta.
\end{align*}
Since $u_0\in H^s(\mathbb R^2)$, by the Dominated Convergence Theorem, it is clear that 
$$\lim_{n\to\infty}\hspace{-.5cm}\underset{\frac n2\leq\sqrt{\xi^2+\eta^2}\leq m}\int \hspace{-.5cm}(1+\xi^2+\eta^2)^s |\widehat u_0(\xi,\eta)|^2 d\xi d\eta=0.$$
Hence $\|u_{0,n}-u_{0,m}\|_{H^s}=o(n^{-s})$ when $n\to\infty$.\\

For $0\leq \sigma<s$, making interpolation and taking into account inequalities \eqref{5.14} and \eqref{5.18} it follows that
$$\|J^\sigma v_{n,m}\|_{L^\infty_T L^2_{xy}}\leq \|J^s v_{n,m}\|^{\sigma/s}_{L^\infty_T L^2_{xy}} \|v_{n,m}\|^{1-\sigma/s}_{L^\infty_T L^2_{xy}}\leq (8 \|u_0\|_s)^{\sigma/s}\|v_{n,m}\|^{(1-\sigma/s)}_{L^\infty_T L^2_{xy}}.$$

Since $\displaystyle{\lim_{n\to\infty} n^s \|v_{n,m}\|_{L^\infty_T L^2_{xy}}=0}$, we have that $\displaystyle{\lim_{n\to\infty} n^{s(1-\sigma/s)}\|v_{n,m}\|^{(1-\sigma/s)}_{L^\infty_T L^2_{xy}}=0}$, i.e.
$$\|v_{n,m}\|^{(1-\sigma/s)}_{L^\infty_T L^2_{xy}}=o(n^{-(s-\sigma)}).$$

Therefore
\begin{align}
\|J^\sigma v_{n,m}\|_{L^\infty_T L^2_{xy}}\leq C o(n^{-(s-\sigma)}).\label{5.19}
\end{align}

In particular, from \eqref{5.19} it follows that $\{u_n\}$ is a Cauchy sequence in $C([0,T];H^\sigma(\mathbb R))$ for $0\leq\sigma<s$. Let us prove now that $\{u_n\}$ is a Cauchy sequence in $L^1([0,T];W^{1,\infty}_{xy})$, which implies that $u\in L^1([0,T];W_{xy}^{1,\infty})$. We know that
$$v_{n,m}(t)=U_{\alpha}(t)v_{n,m}(0)+\int_0^t U_{\alpha}(t-t')\left(-\dfrac12\partial_x((u_n+u_m)(t')v_{n,m}(t'))\right) dt'.$$

Hence, using Cauchy-Schwarz's inequality  in the variable $t$ and \eqref{lessreg}, it follows that for $\delta>0$
\begin{align*}
\|v_{n,m}\|_{L^1_T L^\infty_{xy}}&\leq \|U_{\alpha}(\cdot_t)v_{n,m}(0)\|_{L^1_T L^\infty_{xy}}+\int_0^T \left\|U_{\alpha}(\cdot_t-t')\left(-\dfrac12\partial_x((u_n+u_m)(t')v_{n,m}(t'))\right)\right\|_{L^1_T L^\infty_{xy}}dt'\\
&\leq T^{1/2}\|U_{\alpha}(\cdot_t)v_{n,m}(0)\|_{L^2_T L^\infty_{xy}}+T^{1/2}\int_0^T\left\|U_{\alpha}(\cdot_t-t')\left(-\dfrac12\partial_x((u_n+u_m)(t')v_{n,m}(t'))\right)\right\|_{L^2_T L^\infty_{xy}}dt'\\
&\lesssim T^{1/2+\tilde k_\delta}\|v_{n,m}(0)\|_{H^{1/4(1-\alpha)+\delta}(\mathbb R^2)}+T^{1/2+\tilde k_\delta} \int_0^T \|\partial_x ((u_n+u_m)(t')v_{n,m}(t'))\|_{H^{1/4(1-\alpha)+\delta}(\mathbb R^2)}dt'\\
&\lesssim T^{1/2+\tilde k_\delta}\|v_{n,m}(0)\|_{H^{1/4(1-\alpha)+\delta}(\mathbb R^2)}+T^{1/2+\tilde k_\delta} \int_0^T \|(u_n+u_m)(t')v_{n,m}(t')\|_{H^{1+1/4(1-\alpha)+\delta}(\mathbb R^2)}dt'.\\
\end{align*}

Taking into account that $H^{1+1/4(1-\alpha)+\delta}(\mathbb R^2)$ is  an algebra, from the last inequality we conclude that

\begin{align*}
\|v_{n,m}\|&_{L^1_T L^\infty_{xy}}\lesssim T^{1/2+\tilde k_\delta}\|v_{n,m}(0)\|_{H^{1/4(1-\alpha)+\delta}(\mathbb R^2)}\\
&+T^{1/2+\tilde k_\delta} \int_0^T \|(u_n+u_m)(t')\|_{H^{1+1/4(1-\alpha)+\delta}(\mathbb R^2)}\|v_{n,m}(t')\|_{H^{1+1/4(1-\alpha)+\delta}(\mathbb R^2)}dt'\\
&\lesssim T^{1/2+\tilde k_\delta}\|v_{n,m}(0)\|_{H^{1/4(1-\alpha)+\delta}(\mathbb R^2)}+T^{3/2+\tilde k_\delta} \|u_n+u_m\|_{L^\infty_T H^{1+1/4(1-\alpha)+\delta}(\mathbb R^2)}\|v_{n,m}\|_{L^\infty_T H^{1+1/4(1-\alpha)+\delta}(\mathbb R^2)}.
\end{align*}

Since, for $0<\delta< s-1-1/4(1-\alpha)$, $\{u_n\}$ is a Cauchy sequence in $L^\infty([0,T];H^{1+1/4(1-\alpha)+\delta}(\mathbb R^2))$, from the last inequality we conclude that $\{u_n\}$ is a Cauchy sequence in $L^1([0,T];L^\infty_{xy})$. On the other hand, from Lemma \ref{L4.11}, it follows that
\begin{align*}
\|\nabla v_{n,m}\|_{L^1_T L^\infty_{xy}}&\leq C_\delta T^{k_\delta}  \left(\|v_{n,m}\|_{L^\infty_T H^{s_{\alpha}+2\delta}} +\int_0^T \|\partial_x ((u_n+u_m)(t)v_{n,m}(t))\|_{H^{s_{\alpha}-1+2\delta}} dt\right)\\
&\lesssim T^{k_\delta}  \left(\|v_{n,m}\|_{L^\infty_T H^{s_{\alpha}+2\delta}} +\int_0^T \| (u_n+u_m)(t)v_{n,m}(t)\|_{H^{s_{\alpha}+2\delta}} dt\right)\\
&\lesssim T^{k_\delta}  \left(\|v_{n,m}\|_{L^\infty_T H^{s_{\alpha}+2\delta}} +T \| u_n+u_m\|_{L^\infty_T H^{s_{\alpha}+2\delta}} \|v_{n,m}\|_{L^\infty_T H^{s_{\alpha}+2\delta}}\right).
\end{align*}

If $\delta>0$ is such that $s_{\alpha}+2\delta<s$, then $\{u_n\}$ is a Cauchy sequence in $L^\infty([0,T];H^{s_{\alpha}+2\delta}(\mathbb R^2))$. Therefore, from the last inequality it follows that $\{\nabla u_n\}$ is a Cauchy sequence in $L^1([0,T];L^\infty_{xy})$. Hence, we have that $\{u_n\}$ is a Cauchy sequence in $L^1([0,T];W_{xy}^{1,\infty}(\mathbb R^2))$. It remains to show that $u\in C([0,T];H^s(\mathbb R^2))$ and that $u$ is a solution of the IVP \eqref{BO}.\\

We first show that $u:[0,T]\to H^s(\mathbb R^2)$ is weakly continuous ($u\in C_w([0,T];H^s(\mathbb R^2))$), i.e. that for each $\phi\in H^s$, the function $t\mapsto(u(t),\phi)_s$ is continuous from $[0,T]$ in $\mathbb R$ (here $(\cdot,\cdot)_s$ denotes the inner product in $H^s$). In order to show this, it is enough to prove that if $f_n:[0,T]\to\mathbb R$ is defined by $f_n(t):=(u_n(t),\phi)_s$, then the sequence $\{f_n\}$ is uniformly Cauchy in $[0,T]$. For $\phi\in H^s$, we take $\psi\in C_0^\infty(\mathbb R^2)$ such that $\|\phi-\psi\|_{H^s}<\eta$ (where $\eta>0$ will be specified later). Hence, for $t\in [0,T]$,
\begin{align}
\notag |f_n(t)-f_m(t)|&=|(u_n(t)-u_m(t),\phi)_s|\leq |(u_n(t)-u_m(t),\phi-\psi)_s|+|(u_n(t)-u_m(t),\psi)_s|\\
\notag&\leq \|u_n(t)-u_m(t)\|_{H^s}\eta+|(u_n(t)-u_m(t),J^{2s}\psi)_0|\\
&\leq C\eta+\|u_n(t)-u_m(t)\|_{L^\infty_{xy}} \|J^{2s} \psi\|_{L^1_{xy}}.\label{ago23_1}
\end{align}

For $\sigma\in(1,s)$, we proved that $\{u_n\}$ es a Cauchy sequence in $C([0,T];H^\sigma(\mathbb R^2))$. Taking into account the immersion $H^\sigma(\mathbb R^2)\hookrightarrow L^\infty(\mathbb R^2)$ we conclude that $u_n(t)\to u(t)$ in $L^\infty_{xy}$ uniformly for $t\in[0,T]$. Given $\epsilon>0$, define $\eta$ such that $C\eta<\dfrac\epsilon2$ and define $N\in\mathbb N$ such that for each $m\geq n\geq N$ and $t\in[0,T]$,
$$\|u_n(t)-u_m(t)\|_{L^\infty_{xy}}\|J^{2s}\psi\|_{L^1_{xy}}<\dfrac\epsilon2.$$

Then, from inequality \eqref{ago23_1} it follows that, for $m\geq n\geq N$ and $t\in[0,T]$, $|f_n(t)-f_m(t)|<\epsilon$. i.e., $\{f_n\}$ is uniformly Cauchy in $[0,T]$, so $u\in C_w([0,T];H^s(\mathbb R^2))$.

Now we show that $u\in C([0,T];H^s(\mathbb R))$. Since $v(x,y,t):=u(x,y,-t)$ satisfies the equation $v_t-D_x^{\alpha} v_x -\mathcal Hu_{yy}-v \partial_x v=0$, it is enough to prove the continuity from the right.  We only prove the continuity from the right at $t_0=0$, being the argument similar for $t_0>0$. Suppose that $t_n\to 0^+$. Then, by \eqref{4.34} and \eqref{5.14} we have that
\begin{align*}
\|u_m(t_n)\|^2_{H^s}&\leq \|u_{0,m}\|^2_{H^s}+C_0 \|\nabla u_m\|_{L^1([0,t_n];L^\infty_{xy})}\|u_m\|^2_{L^\infty([0,t_n];H^s)}\\
&\leq \|u_0\|^2_{H^s}+C_0 \|\nabla u_m\|_{L^1([0,t_n];L^\infty_{xy})}\|u_m\|^2_{L^\infty([0,t_n];H^s)}\\
&\leq \|u_0\|^2_{H^s}+C\|u_0\|^2_{H^s}\|\nabla u_m\|_{L^1([0,t_n];L^\infty_{xy})}.
\end{align*}

Since $u_m(t_n)\rightharpoonup u(t_n)$ when $m\to\infty$ weakly in $H^s$ and $u_m\to u$ in $L^1([0,T];W_{xy}^{1,\infty}(\mathbb R^2))$, then
$$\|u(t_n)\|^2_{H^s}\leq \liminf_{m\to\infty} \|u_m(t_n)\|^2_{H^s}\leq \|u_0\|^2_{H^s}+C\|u_0\|^2_{H^s}\|\nabla u\|_{L^1([0,t_n];L^\infty_{xy})}.$$
Taking into account that $\displaystyle{\lim_{n\to\infty}\|\nabla u\|_{L^1([0,t_n];L^\infty_{xy})}=0}$, it follows that
$$\limsup_{n\to\infty}\|u(t_n)\|^2_{H^s}\leq \|u_0\|^2_{H^s},$$
which is equivalent to
$$\limsup_{n\to\infty}\|u(t_n)\|_{H^s}\leq \|u_0\|_{H^s}.$$

Since $u_{0,m}\rightharpoonup u_0$ when $m\to\infty$ in $H^s$ and $u_{0,m}=u_m(0)\rightharpoonup u(0)$ when $m\to\infty$ in $H^s$, then $u(0)=u_0$. And, since $u(t_n)\rightharpoonup u(0)$ when $n\to\infty$ in $H^s$,
$$\limsup_{n\to\infty}\|u(t_n)\|_{H^s}\leq \|u(0)\|_{H^s}\leq \liminf_{n\to\infty} \|u(t_n)\|_{H^s},$$
which implies that $\displaystyle{\lim_{n\to \infty}\|u(t_n)\|_{H^s}=\|u(0)\|_{H^s}}$. This, together with the fact that $u(t_n)\rightharpoonup u(0)$ in $H^s$, imply that $u(t_n)\to u_0$ in $H^s$, i.e., $u:[0,T]\to H^s$ is continuous from the right at $t_0=0$.\\

On the other hand, it is clear that $\partial_t u_n+(D_x^{\alpha} \partial_x +\mathcal H\partial_{yy}) u_n$ converges to $\partial_t u+(D_x^{\alpha} \partial_x +\mathcal H\partial_{yy}) u$ in the sense of distributions. Besides, $u_n\partial_x u_n\to u\partial_x u$ in $C([0,T];L^2_{xy})$ because $u_n\to u$ in $C([0,T];L^{\infty}_{xy})$ and $\partial_xu_n\to\partial_xu$ in $C([0,T];L^2_{xy})$. In consequence, since $\partial_t u_n+(D_x^{\alpha} \partial_x +\mathcal H\partial_{yy}) u_n+u_n \partial_x u_n=0$, we have that $u$ satisfies the fractional two-dimensional BO equation in the sense of distributions.

\subsection{Continuous dependence on the initial data}

In this subsection we follow the same procedure of Koch and Tzvetkov in \cite{KT2003}. For the sake of completeness we include the proofs of Lemmas \ref{KTL3.6} and \ref{KTL4.1}. Let us recall the notations introduced at the beginning of Lemma \ref{L4.11}: for $\lambda:=2^k$, with $k\in\mathbb N$, we define $u_\lambda:=\Delta_\lambda u$, where $\displaystyle{(\Delta_\lambda u)^\wedge(\xi,\eta):=\varphi \left(\dfrac\xi{2^k},\dfrac\eta{2^k} \right)\widehat u(\xi,\eta)}$. Besides we define $u_1:=\Delta_1 u$, where $(\Delta_1 u)^\wedge (\xi,\eta)=\chi(\xi,\eta) \widehat u(\xi,\eta)$ and $\displaystyle{\chi(\xi,\eta)+\sum_{k=1}^\infty \varphi\left(\dfrac\xi{2^k},\dfrac\eta{2^k} \right)}=1$. We also define $\tilde \Delta_\lambda:=\Delta_{\lambda/2}+\Delta_\lambda+\Delta_{2\lambda}$, for $\lambda$ dyadic greater than 1, and $\tilde \Delta_1:=\Delta_1+\Delta_2$.

\begin{lemma}\label{CP}(see \cite{CP2016}) There is a constant $C>0$ such that for any $w\in L^2(\mathbb R^2)$ and any $v$ such that $\nabla v\in L^{\infty}(\mathbb R^2)$
\[\|[\Delta_{\lambda}, v\partial_x]w\|_{L^2}\leq C\|\nabla v\|_{L^{\infty}}\|w\|_{L^2}\,,\]
where $[\Delta_{\lambda}, v\partial_x]w:=\Delta_{\lambda}(v\partial_xw)-v\partial_x\Delta_{\lambda}w$.
\end{lemma}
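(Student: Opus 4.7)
The plan is a standard Coifman--Meyer style commutator argument: realize $\Delta_\lambda$ as a convolution with a rescaled Schwartz kernel, move the $\partial_x$ off $w$ via integration by parts, and exploit the fact that $v$ appears only through first-order differences that can be absorbed by $\|\nabla v\|_{L^\infty}$.

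More precisely, write $\Delta_\lambda w = h_\lambda * w$, where $h_\lambda(x,y)=\lambda^2 h(\lambda x,\lambda y)$ and $h=\check\varphi\in\mathcal S(\mathbb R^2)$ (for $\lambda=1$ use the corresponding $\chi$; the argument is identical). Then
\[
[\Delta_\lambda, v\partial_x]w(x,y)=\int h_\lambda(x-x',y-y')\bigl[v(x',y')-v(x,y)\bigr]\,\partial_{x'}w(x',y')\,dx'dy'.
\]
Integrating by parts in $x'$ (and using $\partial_{x'}h_\lambda(x-x',y-y')=-(\partial_x h_\lambda)(x-x',y-y')$), I split the commutator as $T_1w+T_2w$, where
\[
T_1w(x,y)=\int (\partial_x h_\lambda)(x-x',y-y')\,\bigl[v(x',y')-v(x,y)\bigr]\,w(x',y')\,dx'dy',
\]
\[
T_2w(x,y)=-\int h_\lambda(x-x',y-y')\,(\partial_{x'}v)(x',y')\,w(x',y')\,dx'dy'.
\]

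The term $T_2w$ is nothing but $-h_\lambda *(w\,\partial_xv)$, so Young's inequality together with $\|h_\lambda\|_{L^1}=\|h\|_{L^1}$ (a scale-invariant bound) gives $\|T_2w\|_{L^2}\le\|h\|_{L^1}\|\partial_x v\|_{L^\infty}\|w\|_{L^2}$. For $T_1$, the mean-value inequality yields $|v(x',y')-v(x,y)|\le\|\nabla v\|_{L^\infty}\,|(x-x',y-y')|$, so
\[
|T_1w(x,y)|\le\|\nabla v\|_{L^\infty}\,(K_\lambda*|w|)(x,y),\qquad K_\lambda(x,y):=|(x,y)|\,|(\partial_x h_\lambda)(x,y)|.
\]
A rescaling $(x,y)\mapsto(\lambda x,\lambda y)$ shows $\|K_\lambda\|_{L^1}=\int|(u,v)|\,|\partial_xh(u,v)|\,du\,dv$, a finite constant independent of $\lambda$ since $\partial_x h\in\mathcal S(\mathbb R^2)$. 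Young's inequality then gives $\|T_1w\|_{L^2}\le C\|\nabla v\|_{L^\infty}\|w\|_{L^2}$, and adding the two bounds yields the desired estimate.

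The main point (and the only place the structure of the commutator matters) is the cancellation that produces the factor $v(x',y')-v(x,y)$: without it one would only get a bound with $\|v\|_{L^\infty}$ multiplied by a derivative on $w$, which would blow up with $\lambda$. Once that factor is exposed, everything reduces to the scale-invariance $\|h_\lambda\|_{L^1}\sim 1$ and $\|\,|\cdot|\,\partial_xh_\lambda\|_{L^1}\sim 1$, both immediate from $h\in\mathcal S$. No obstacle is really expected; the only care needed is the separate treatment of the low-frequency block $\Delta_1$, which is handled identically with $\chi$ in place of $\varphi(\cdot/2^k)$.
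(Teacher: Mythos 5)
Your proof is correct. Note that the paper itself gives no proof of this lemma---it simply cites Cunha--Pastor \cite{CP2016}---and your argument (writing $\Delta_\lambda$ as convolution with the rescaled kernel $h_\lambda$, integrating by parts to expose the difference $v(x',y')-v(x,y)$, bounding it by $\|\nabla v\|_{L^\infty}\,|(x-x',y-y')|$, and using the scale-invariant $L^1$ bounds on $h_\lambda$ and $|\cdot|\,\partial_x h_\lambda$ together with Young's inequality) is exactly the standard Calder\'on-type commutator argument underlying the cited result. The only cosmetic omission is that one should first argue for Schwartz $w$ (so the integration by parts is clearly justified) and then extend to $w\in L^2$ by density, but this is routine.
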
 

\begin{lemma} \label{KTL3.6} (see \cite{KT2003}) Let $u\in C([0,T];H^s(\mathbb R^2))\cap L^1([0,T];W^{1,\infty}_{xy}(\mathbb R^2))$ be a solution of the  fractional two-dimensional BO equation in $[0,T]$ with $s>s_{\alpha}$. Let $\delta$ and $k$ be such that $1<\delta<k$. Assume that the dyadic sequence of positive numbers $(w_\lambda)_\lambda$ satisfies $\delta w_\lambda\leq w_{2\lambda}\leq k w_\lambda$, for each dyadic integer $\lambda$ and that $\sum_{\lambda} w_\lambda^2 \|u_\lambda(0)\|^2_{L^2}<\infty$. Then for each $\tau$ and $t$ in $[0,T]$ it follows that
\begin{align}
\sum_{\lambda} w_\lambda^2 \|u_\lambda(t)\|^2_{L^2} \lesssim \exp \left(C\|\nabla u\|_{L^1_I L^\infty_{xy}} \right) \sum_{\lambda} w_\lambda^2 \|u_\lambda(\tau)\|^2_{L^2}, \label{KT3.33}
\end{align}
where $I:=[\tau,t]$.
\end{lemma}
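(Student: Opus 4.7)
The plan is to derive a differential inequality for $G(t):=\sum_{\lambda} w_\lambda^2\|u_\lambda(t)\|_{L^2_{xy}}^2$ and close with Gronwall. First I would apply the Littlewood--Paley projector $\Delta_\lambda$ to the equation in \eqref{BO}, producing
\[\partial_t u_\lambda+D_x^\alpha\partial_x u_\lambda+\mathcal H\partial_{yy} u_\lambda+\Delta_\lambda(u\partial_x u)=0,\]
and then pair with $u_\lambda$ in $L^2_{xy}$. The dispersive contribution vanishes by the same skew--adjointness computation used in \eqref{SA}, so
\[\tfrac12\tfrac{d}{dt}\|u_\lambda(t)\|_{L^2_{xy}}^2=-\int_{\mathbb R^2} u_\lambda\,\Delta_\lambda(u\partial_x u)\,dxdy.\]

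I would then split $\Delta_\lambda(u\partial_x u)=u\,\partial_x u_\lambda+[\Delta_\lambda,u\partial_x]u$. Integration by parts turns the first piece into $-\tfrac12\int(\partial_x u)u_\lambda^2$, controlled by $\tfrac12\|\partial_x u\|_{L^\infty_{xy}}\|u_\lambda\|_{L^2_{xy}}^2$. Applying Lemma \ref{CP} directly with $v=w=u$ bounds the commutator by $C\|\nabla u\|_{L^\infty_{xy}}\|u\|_{L^2_{xy}}$, but the lack of frequency localization on the right is too crude to close the weighted sum. The main technical step is a refinement, which I would obtain by performing a Bony--type paraproduct decomposition of $u\partial_x u$ (splitting each copy of $u$ into dyadic blocks and using that outputs of frequency $\sim\lambda$ require at least one input near scale $\lambda$, with the high--high interactions handled by the cancellation already built into Lemma \ref{CP}), yielding
\[\|[\Delta_\lambda,u\partial_x]u\|_{L^2_{xy}}\lesssim \|\nabla u\|_{L^\infty_{xy}}\|\tilde\Delta_\lambda u\|_{L^2_{xy}},\]
where $\tilde\Delta_\lambda$ is the fattened projector introduced just before the lemma. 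This refined commutator estimate is the main obstacle.

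Combining the two bounds gives
\[\tfrac{d}{dt}\|u_\lambda\|_{L^2_{xy}}^2\lesssim \|\nabla u\|_{L^\infty_{xy}}\|u_\lambda\|_{L^2_{xy}}\bigl(\|u_\lambda\|_{L^2_{xy}}+\|\tilde\Delta_\lambda u\|_{L^2_{xy}}\bigr).\]
Multiplying by $w_\lambda^2$ and summing over dyadic $\lambda$, the doubling condition $\delta w_\lambda\leq w_{2\lambda}\leq k w_\lambda$ makes the weights on $u_{\lambda/2},u_\lambda,u_{2\lambda}$ comparable, and Cauchy--Schwarz together with a reindexing of the dyadic scales converts the cross terms into $G(t)$ itself, producing
\[\tfrac{d}{dt}G(t)\lesssim \|\nabla u(t)\|_{L^\infty_{xy}}G(t).\]
Integrating over $I=[\tau,t]$ and applying Gronwall's inequality yields \eqref{KT3.33}. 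Finiteness of $G$ at each time and the legitimacy of differentiating termwise follow from the hypothesis $\sum_\lambda w_\lambda^2\|u_\lambda(0)\|_{L^2_{xy}}^2<\infty$ together with $u\in C([0,T];H^s(\mathbb R^2))$ for some $s>s_\alpha$.
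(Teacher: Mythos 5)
Your overall architecture (project with $\Delta_\lambda$, use skew-adjointness, peel off $u\partial_x u_\lambda$ and integrate by parts, estimate the commutator, weight, sum, Gronwall) is the same as the paper's. The gap is in the ``refined commutator estimate'' that you yourself flag as the main technical step: the bound
\[
\|[\Delta_\lambda,u\partial_x]u\|_{L^2}\lesssim\|\nabla u\|_{L^\infty}\|\tilde\Delta_\lambda u\|_{L^2}
\]
is false. The high--high interactions are not removed by any cancellation: take $u$ with $\widehat u$ supported at frequencies $\sim 2^j$ with $2^j\gg\lambda$. Then $\tilde\Delta_\lambda u=0$ and $u\partial_x\Delta_\lambda u=0$, so $[\Delta_\lambda,u\partial_x]u=\tfrac12\Delta_\lambda\partial_x(u^2)$, which is generically nonzero because $u^2$ has Fourier mass near frequency $\lambda$; the right-hand side of your estimate vanishes while the left-hand side does not. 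The correct localized estimate, which the paper obtains by writing $[\Delta_\lambda,u\partial_x]u=[\Delta_\lambda,u\partial_x]\tilde\Delta_\lambda u+\Delta_\lambda\bigl(u\,\partial_x(I-\tilde\Delta_\lambda)u\bigr)$ and applying Lemma \ref{CP} only to the first piece, necessarily carries a tail over all high frequencies:
\[
\|[\Delta_\lambda,u\partial_x]u\|_{L^2}\lesssim\|\nabla u\|_{L^\infty}\Bigl(\|\tilde\Delta_\lambda u\|_{L^2}+\sum_{\mu\geq\lambda/8}\|u_\mu\|_{L^2}\Bigr).
\]

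This matters for the rest of your argument, because closing the weighted sum with that tail is not just Cauchy--Schwarz on neighbouring blocks. One needs the discrete Schur-type inequality
\[
\sum_\lambda w_\lambda^2\|u_\lambda\|_{L^2}\Bigl(\sum_{\mu\geq\lambda/8}\|u_\mu\|_{L^2}\Bigr)\lesssim\sum_\lambda w_\lambda^2\|u_\lambda\|_{L^2}^2,
\]
and this is precisely where the hypothesis $\delta>1$ enters: the strict lower doubling gives $w_\lambda\leq\delta^{-j}w_{2^j\lambda}$, making the off-diagonal sum geometrically convergent. In your proposal that hypothesis is never used beyond comparability of adjacent weights, which is a symptom of having assumed a commutator bound that is too strong. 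The remaining steps of your outline (energy identity, skew-adjointness, integration by parts, Gronwall) are correct and coincide with the paper.
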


\begin{proof} It is easy to see that $u_\lambda$ satisfies the differential equation
$$\partial_t u_\lambda +(D_x^{\alpha} \partial_x +\mathcal H\partial_{yy}) u_\lambda=-u \partial_x u_\lambda-[\Delta_\lambda,u \partial_x]u.$$
We multiply this equation by $u_\lambda(t)$, integrate in $\mathbb R^2_{xy}$ and use integration by parts to conclude that
$$\dfrac12 \dfrac d{dt} \|u_\lambda(t)\|^2_{L^2}=\dfrac 12\int_{\mathbb R^2} u_x(t)(u_\lambda(t))^2+\int_{\mathbb R^2}-[\Delta_\lambda,u(t)\partial_x]u(t)u_\lambda(t).$$
Let us integrate this equation with respect to time in $[\tau,t]$ to obtain
$$\dfrac12\|u_\lambda(t)\|_{L^2}^2 =\dfrac12 \|u_\lambda(\tau)\|^2_{L^2}+\dfrac12\int_\tau^t\int_{\mathbb R^2}u_x(\sigma)(u_\lambda(\sigma))^2dxdyd\sigma-\int_\tau^t\int_{\mathbb R^2}[\Delta_\lambda,u(\sigma)\partial_x]u(\sigma)u_\lambda(\sigma)dxdyd\sigma.$$
Hence,
\begin{align}
\notag \sum_\lambda w_\lambda^2 \|u_\lambda(t)\|^2_{L^2}\leq &\sum_\lambda w_\lambda^2 \|u_\lambda(\tau)\|^2_{L^2}+\sum_\lambda w_\lambda^2 \int_\tau^t \|\nabla u(\sigma)\|_{L^\infty} \|u_\lambda(\sigma)\|^2_{L^2} d\sigma\\
&+2\sum_\lambda w_\lambda^2 \int_\tau^t \|u_\lambda(\sigma)\|_{L^2}\|[\Delta_\lambda,u(\sigma)\partial_x]u(\sigma)\|_{L^2}d\sigma.\label{sep06_1}
\end{align}
Taking into account that $\Delta_\lambda \tilde \Delta_\lambda=\Delta_\lambda$, it could be seen that
$$[\Delta_\lambda,u\partial_x]u=[\Delta_\lambda,u\partial_x ]\tilde \Delta_\lambda u+\Delta_\lambda(u\partial_x (I-\tilde \Delta_\lambda)u).$$
Therefore,
\begin{align}
\|[\Delta_\lambda,u(\sigma)\partial_x]u(\sigma)\|_{L^2}\leq \|[\Delta_\lambda,u(\sigma)\partial_x]\tilde \Delta_\lambda u(\sigma)\|_{L^2}+\|\Delta_\lambda(u(\sigma)\partial_x(I-\tilde\Delta_\lambda)u(\sigma))\|_{L^2}.\label{sep06_2}
\end{align}
Now, from Lemma \ref{CP}, it follows that
\begin{align}
\|[\Delta_\lambda,u(\sigma)\partial_x]\tilde\Delta_\lambda u(\sigma)\|_{L^2}\leq \|\nabla u(\sigma)\|_{L^\infty}\|\tilde\Delta_\lambda u(\sigma)\|_{L^2}.\label{sep06_3}
\end{align}
On the other hand, since for $\mu$ dyadic, $\mu\leq\dfrac\lambda{16}$, we have that $\Delta_\lambda(u_\mu \partial_x(I-\tilde \Delta_\lambda)u)=0$ and the operator $(I-\tilde\Delta_\lambda)$ is bounded from $L^\infty$ to $L^\infty$, we see that
\begin{align}
\notag \|\Delta_\lambda(u\partial_x(I-\tilde\Delta_\lambda)u)\|_{L^2}&\leq \sum_{\mu\geq \lambda/8} \|\Delta_\lambda(u_\mu \partial_x (I-\tilde \Delta_\lambda)u)\|_{L^2}  \leq \sum_{\mu\geq \lambda/8} \|u_\mu \partial_x(I-\tilde \Delta_\lambda)u\|_{L^2}\\
\notag &\leq \sum_{\mu\geq \lambda/8} \|(I-\tilde\Delta_\lambda)\partial_x u\|_{L^\infty}\|u_\mu\|_{L^2}\lesssim  \sum_{\mu\geq \lambda/8} \|\partial_x u\|_{L^\infty}\|u_\mu\|_{L^2}\\
&\leq \|\nabla u\|_{L^\infty} \sum_{\mu\geq \lambda/8}\|u_\mu\|_{L^2}.\label{sep06_4}
\end{align}

From \eqref{sep06_2}, \eqref{sep06_3}, and  \eqref{sep06_4}, it follows that
\begin{align}
\notag\int_\tau^t \sum_\lambda & w_\lambda^2 \|u_\lambda(\sigma)\|_{L^2 }\|[\Delta_\lambda,u(\sigma)\partial_x]u(\sigma)\|_{L^2}d\sigma\\
\notag\leq &\int_\tau^t \sum_\lambda w_\lambda^2 \|u_\lambda(\sigma)\|_{L^2}\left( \|\nabla u(\sigma)\|_{L^\infty} \|\tilde \Delta_\lambda u(\sigma)\|_{L^2} +\|\nabla u(\sigma)\|_{L^\infty} \sum_{\mu\geq \lambda/8} \|u_\mu(\sigma)\|_{L^2}  \right)d\sigma\\
\notag\leq &\int_\tau^t \|\nabla u(\sigma)\|_{L^\infty} \left(\sum_\lambda w_\lambda^2 \|u_\lambda(\sigma)\|_{L^2} \|\tilde\Delta_\lambda u(\sigma)\|_{L^2} +\sum_\lambda \left( w_\lambda^2 \|u_\lambda(\sigma)\|_{L^2} \sum_{\mu\geq \lambda/8} \|u_\mu(\sigma)\|_{L^2}\right) \right) d\sigma.\\
\leq&2\int_\tau^t \|\nabla u(\sigma)\|_{L^\infty} \sum_\lambda \left( w_\lambda^2 \|u_\lambda(\sigma)\|_{L^2} \sum_{\mu\geq\lambda/8} \|u_\mu(\sigma)\|_{L^2} \right) d\sigma.\label{sep06_5}
\end{align}
From \eqref{sep06_1} and \eqref{sep06_5} we conclude that
\begin{align}
\sum_\lambda w_\lambda^2 \|u_\lambda(t)\|_{L^2}^2 \leq \sum_\lambda w_\lambda^2 \|u_\lambda(\tau)\|^2_{L^2}+C\int_\tau^t \|\nabla u(\sigma)\|_{L^\infty}\sum_\lambda \left(w_\lambda^2 \|u_\lambda(\sigma)\|_{L^2}\sum_{\mu\geq \lambda/8} \|u_\mu(\sigma)\|_{L^2}\right)d\sigma.\label{sep26}
\end{align}

Since $\delta w_\lambda\leq w_{2\lambda}$ for each $\lambda$, it can be proved by induction, that for all $j\geq 0$,
\begin{align}
w_\lambda\leq\dfrac1{\delta^j}w_{2^j\lambda},\label{sep06_6}
\end{align}

and since $w_\lambda\leq k w_{\lambda/2}$, it can also be seen that
\begin{align}
w_\lambda\leq k w_{2^{-1}\lambda},\quad w_\lambda\leq k^2 w_{2^{-2}\lambda},\quad w_\lambda\leq k^3 w_{2^{-3}\lambda}. \label{sep06_7}
\end{align}

Let us see that for any $(d_\lambda)_\lambda$, such that $\displaystyle{\left(\sum_\lambda d_\lambda^2\right)^{1/2}}<\infty$ we have that
\begin{align}
\sum_\lambda \left( \sum_{\mu\geq\lambda/8} w_\lambda \|u_\mu(\sigma)\|_{L^2} \right)d_\lambda\lesssim \left( \sum_\lambda w_\lambda^2 \|u_\lambda(\sigma)\|^2_{L^2}\right)^{1/2}\left(\sum_\lambda d_\lambda^2\right)^{1/2}.\label{sep06_8}
\end{align}

In fact, from \eqref{sep06_6} and \eqref{sep06_7} it follows that
\begin{align*}
\sum_\lambda \left( \sum_{\mu\geq\lambda/8} w_\lambda \|u_\mu(\sigma)\|_{L^2} \right)d_\lambda&=\sum_{\lambda} \left(\sum_{j=-3}^\infty w_\lambda \|w_{2^j \lambda}(\sigma)\|_{L^2}\right)d_\lambda \\
&\lesssim \sum_\lambda \left(\sum_{j=-3}^{-1} k^{-j} w_{2^j\lambda} \|u_{2^j \lambda}(\sigma)\|_{L^2}+\sum_{j=0}^\infty \dfrac1{\delta^j}w_{2^j \lambda} \|u_{2^j \lambda}(\sigma)\|_{L^2}\right)d_\lambda .
\end{align*}

Defining  $a_j:=k^{-j}$ for $j=-3,-2,-1$ and $a_j:=\dfrac1{\delta^j}$ for $j\geq 0$, we have
\begin{align*}
\sum_\lambda \left( \sum_{\mu\geq\lambda/8} w_\lambda \|u_\mu(\sigma)\|_{L^2} \right)d_\lambda&\lesssim \sum_\lambda \left(\sum_{j=-3}^\infty a_j w_{2^j\lambda} \|u_{2^j\lambda}(\sigma)\|_{L^2}\right)d_\lambda= \sum_{j=-3}^\infty a_j \left(\sum_\lambda w_{2^j\lambda} \|u_{2^j\lambda}(\sigma)\|_{L^2}d_\lambda\right)\\
&\leq \sum_{j=-3}^\infty a_j \left(\sum_{\lambda}w_{2^j\lambda}^2 \|u_{2^j\lambda}(\sigma)\|^2_{L^2} \right)^{1/2} \left(\sum_\lambda d_\lambda^2 \right)^{1/2}\\
&\leq \left( \sum_{j=-3}^\infty a_j\right)\left(\sum_\lambda w_\lambda^2 \|u_\lambda(\sigma)\|^2_{L^2} \right)^{1/2}\left(\sum_{\lambda}d_\lambda^2 \right)^{1/2}\\
&\lesssim \left(\sum_\lambda w_\lambda^2 \|u_\lambda (\sigma)\|^2_{L^2} \right)^{1/2}\left(\sum_{\lambda}d_\lambda^2\right)^{1/2},
\end{align*}
which proves \eqref{sep06_8}.\\

Taking in \eqref{sep06_8} $d_\lambda:=w_\lambda \|u_\lambda (\sigma)\|_{L^2}$, we conclude that
\begin{align}
\sum_\lambda w_\lambda^2\|u_\lambda(\sigma)\|_{L^2} \left(\sum_{\mu\geq \lambda/8} \|u_\mu(\sigma)\|_{L^2} \right)\lesssim \sum_\lambda w_\lambda^2 \|u_\lambda (\sigma)\|_{L^2}^2.\label{sep13_09}
\end{align}

From \eqref{sep26} and \eqref{sep13_09} it follows that
\begin{align}
\sum_\lambda w_\lambda^2 \|u_\lambda (t)\|_{L^2}^2 \lesssim \sum_\lambda w_\lambda^2 \|u_\lambda(\tau)\|_{L^2}^2+C\int_\tau^t \|\nabla u (\sigma)\|_{L^\infty}\sum_\lambda w_\lambda^2 \|u_\lambda(\sigma)\|_{L^2}^2 d\sigma.\label{3.37}
\end{align}

Hence, \eqref{KT3.33} follows from \eqref{3.37} using Gronwall's inequality.
\end{proof}

\begin{lemma} \label{KTL4.1} (see \cite{KT2003}) Let us assume that $\nu^n\to \nu$ in $H^s(\mathbb R^2)$. Then there exists a dyadic sequence of positive numbers $(w_{\lambda})_{\lambda}$ such that $2^s w_\lambda\leq w_{2\lambda}\leq 2^{s+1}w_\lambda$, and
\begin{align}
\lim_{\lambda\to\infty}\dfrac{w_\lambda}{\lambda^s}=\infty,\label{KT4.19}
\end{align}
such that
\begin{align}
\sup_n \sum_\lambda w_\lambda^2 \|\nu_\lambda^n\|^2_{L^2}<\infty,\label{KT4.20}
\end{align}
where $\nu_\lambda^n:=\Delta_\lambda \nu^n$.
\end{lemma}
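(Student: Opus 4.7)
The plan is to translate the statement into a question about slowly growing dyadic multipliers of the Littlewood-Paley coefficients, exploiting that convergence in $H^s$ (not merely boundedness) produces uniform tail bounds on the Littlewood-Paley square function. Writing $w_\lambda=\lambda^s c_\lambda$, the growth condition $2^s w_\lambda\leq w_{2\lambda}\leq 2^{s+1}w_\lambda$ becomes $1\leq c_{2\lambda}/c_\lambda\leq 2$ (that is, $(c_\lambda)$ is non-decreasing along dyadics with ratio at most $2$), the condition \eqref{KT4.19} becomes $c_\lambda\to\infty$, and \eqref{KT4.20} becomes $\sup_n\sum_\lambda c_\lambda^{\,2}\lambda^{2s}\|\nu_\lambda^n\|_{L^2}^2<\infty$.

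First I would establish uniform tail control. Using the Littlewood-Paley characterization $\|g\|_{H^s}^2\sim \|g_1\|_{L^2}^2+\sum_{\lambda\geq 2}\lambda^{2s}\|g_\lambda\|_{L^2}^2$ together with $\nu^n\to\nu$ in $H^s$ and $\nu\in H^s$: given $\varepsilon>0$ pick $\Lambda$ so large that $\sum_{\mu\geq \Lambda}\mu^{2s}\|\nu_\mu\|_{L^2}^2<\varepsilon$, then use $\|\nu^n-\nu\|_{H^s}\to 0$ to bound the Littlewood-Paley tail of $\nu^n$ for $n$ large, enlarging $\Lambda$ to absorb the finitely many remaining indices. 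This produces
\begin{align*}
f(\Lambda):=\sup_n\sum_{\mu\geq \Lambda}\mu^{2s}\|\nu_\mu^n\|_{L^2}^2\longrightarrow 0\qquad\text{as}\qquad\Lambda\to\infty.
\end{align*}

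Next I would construct $(c_\lambda)$ with jump points paced to the decay of $f$. Pick an increasing sequence of dyadic integers $\Lambda_0<\Lambda_1<\cdots$ with $\Lambda_{j+1}\geq 2\Lambda_j$ and $f(\Lambda_j)\leq 8^{-j}$, and set $c_\lambda:=2^j$ for $\lambda\in[\Lambda_j,\Lambda_{j+1})$ (with $c_\lambda:=1$ for $\lambda<\Lambda_0$). Since $\Lambda_{j+1}/\Lambda_j\geq 2$, the ratio $c_{2\lambda}/c_\lambda$ is always $1$ or $2$, so $w_\lambda:=\lambda^s c_\lambda$ satisfies the doubling hypothesis, and $c_\lambda\to\infty$ gives \eqref{KT4.19}. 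Finally, decomposing into frequency shells,
\begin{align*}
\sum_\lambda w_\lambda^{\,2}\|\nu_\lambda^n\|_{L^2}^2
\;\lesssim\;\|\nu_1^n\|_{L^2}^2+\sum_{j\geq 0}4^j\sum_{\lambda\in[\Lambda_j,\Lambda_{j+1})}\lambda^{2s}\|\nu_\lambda^n\|_{L^2}^2
\;\leq\;\|\nu^n\|_{L^2}^2+\sum_{j\geq 0}4^jf(\Lambda_j)
\;\leq\;C+\sum_{j\geq 0}2^{-j},
\end{align*}
which is finite and uniform in $n$, giving \eqref{KT4.20}.

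The only delicate point is the calibration in the second step: the growth $c_\lambda\sim 2^j$ on the $j$-th shell and the decay $f(\Lambda_j)\leq 8^{-j}$ must be tuned in lockstep so that $\sum 4^j f(\Lambda_j)<\infty$ while $c_\lambda$ still diverges and retains the at-most-doubling property. This is precisely where convergence of $\nu^n$ in $H^s$ (rather than mere boundedness) enters: a sequence which is only bounded in $H^s$ can fail to have uniformly decaying tails, and in that case no weight $(w_\lambda)$ of the required form can exist.
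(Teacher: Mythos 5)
Your proof is correct and follows essentially the same route as the paper: both arguments first upgrade $H^s$-convergence to a uniform-in-$n$ smallness of the Littlewood--Paley tails (your $f(\Lambda)\to 0$ is the paper's $\sup_n\sum_{i\geq N_k}a_i^n<2^{-k}$), and then build the weight as a geometric staircase on frequency shells paced so that its growth ($4^j$ on the $j$-th shell, versus the paper's $2^{k/2}$) is dominated by the tail decay ($8^{-j}$ versus $2^{-k}$), with the at-most-doubling property enforced by spacing the shell endpoints. The only differences are cosmetic calibrations of the two geometric rates.
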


\begin{proof} Let us consider the space $l^1(\mathbb N)$ of summable sequences. We define
\begin{align}
a_i^n:=\lambda^{2s}\|\nu^n_{2^i}\|_{L^2}^2\quad \text{and}\quad a_i:=\lambda^{2s}\|\nu_{2^i}\|^2_{L^2},\label{KT4.21}
\end{align}
where $\lambda:=2^i$. From the hypothesis it follows that $\left(a_i^n\right)_{i\in \mathbb N}\to \left(a_i\right)_{i\in \mathbb N}$ in $l^1(\mathbb N)$, as $n\to\infty$. In fact,
\begin{align}
\sum_i |a_i^n-a_i|=\sum_i (2^i)^{2s} \left| \|\nu^n_{2^i}\|^2_{L^2}-\|\nu_{2^i}\|_{L^2}^2 \right|. \label{sep25_01}
\end{align}
On the other hand, from the Littlewood-Paley Theorem, we have that
\begin{align}
\|J^s(\nu^n-\nu)\|_{L^2}\lesssim \left(\sum_i (2^i)^{2s}\|\nu^n_{2^i}-\nu_{2^i}\|_{L^2}^2 \right)^{1/2} \lesssim \|J^s(\nu^n-\nu)\|_{L^2}.\label{sep25_02}
\end{align}
Since $\nu^n\to \nu$ in $H^s(\mathbb R^2)$, $\|J^s(\nu^n-\nu)\|_{L^2}\to 0$ as $n\to\infty$. Therefore, from \eqref{sep25_01} and \eqref{sep25_02}, it follows that $(a_i^n)_{i\in\mathbb N}\to(a_i)_{i\in\mathbb N}$ in $l^1(\mathbb N)$ as $n\to\infty$.\\

Let us construct a sequence $\{\mu_i\}$ such that $0<\mu_i<\mu_{i+1}\leq 2\mu_i$, such that $\displaystyle{\lim_{i\to\infty}\mu_i=\infty}$ and
\begin{align}
\sup_n \sum_{i=1}^\infty \mu_i a_i^n<\infty.\label{KT4.23}
\end{align}
(Observe that $\displaystyle{\sup_n\sum_{i=1}^\infty a_i^n<\infty}$ because $(a_i^n)_{i\in\mathbb N}\to (a_i)_{i\in\mathbb N}$ in $l^1(\mathbb N)$).\\

Let us see that, for each $k\in\mathbb N$, there exists $N_k$ such that
\begin{align}
\sup_n \sum_{i=N_k}^\infty a_i^n<2^{-k}\label{KT4.24}.
\end{align}
In fact, since $\displaystyle{\sum_{i=1}^\infty a_i<\infty}$, there exists $N_k^*$ such that $\displaystyle{\sum_{i=N_k^*}^\infty a_i<\dfrac{2^{-k}}{2}}$. Since $(a_i^n)_{i\in\mathbb N}\to(a_i)_{i\in\mathbb N}$ in $l^1(\mathbb N)$ as $n\to\infty$, there exists $N$ such that, for each $n\geq N$, $\displaystyle{\sum_{i=1}^\infty}|a_i^n-a_i|<\dfrac{2^{-k}}{3}$.\\

On the other hand, there exists $N_k^{**}$ such that, for each $n\in\{1,\dots,N-1\}$, $\displaystyle{\sum_{i=N_k^{**}}a_i^n<\dfrac{2^{-k}}{2}}$. We define $N_k:=\max\{N_k^*,N_k^{**}\}$. Hence, for each $n\geq N$,
$$\sum_{i=N_k}^\infty a_i^n\leq \sum_{i=N_k}^\infty a_i+\sum_{i=N_k}^\infty (a_i^n-a_i)<\dfrac{2^{-k}}2+\dfrac{2^{-k}}3,$$
and for $n\leq N$,
$$\sum_{i=N_k}^\infty a_i^n \leq \sum_{i=N_k^{**}} a_i^n <\dfrac{2^{-k}}2.$$
This way, inequality \eqref{KT4.24} follows.\\

We can assume that the sequence $\{N_k\}_{k\in\mathbb N}$ is strictly increasing. For fixed $i\in\mathbb N$, there exists a unique $k\in\mathbb N$ such that $N_{k-1}\leq i<N_k$. Let us define $\mu_i:=2^{k/2}$. It is clear that $0<\mu_i\leq \mu_{i+1}\leq 2\mu_i$. Using \eqref{KT4.24} we obtain that
\begin{align}
\sum_{i=1}^\infty \mu_i a_i^n=\sum_{k=0}^\infty \sum_{i=N_k}^{N_{k+1}-1} \mu_i a_i^n \leq \sum_{k=0}^\infty 2^{(k+1)/2}\sum_{i=N_k}^{N_{k+1}-1}a_i^n\leq \sum_{k=0}^\infty 2^{(k+1)/2}2^{-k}=2^{1/2}\sum_{k=0}^\infty 2^{-k/2}<\infty. \label{KT4.25}
\end{align}
i.e. $\displaystyle{\sup_n}\sum_i \mu_i \lambda^{2s}\|\nu^n_{2^i}\|^2_{L^2}<\infty$, where $\lambda=2^i$. Define $w_\lambda:= \mu_i^{1/2}\lambda^s$. Since $w_{2\lambda}=\mu_{i+1}^{1/2}(2\lambda)^s=2^s\mu_{i+1}^{1/2}\lambda^s$, then $2^sw_\lambda=2^s\mu_i^{1/2}\lambda^s\leq 2^s\mu_{i+1}^{1/2}\lambda^s=w_{2\lambda}$ and $w_{2\lambda}=2^s \mu_{i+1}^{1/2}\lambda^s\leq 2^s(2\mu_i)^{1/2}\lambda^s=2^{1/2}2^s w_\lambda\leq 2^{s+1}w_\lambda$. Hence
$$\lim_{i\to\infty}\dfrac{w_\lambda}{\lambda^s}=\lim_{i\to\infty}\mu_i^{1/2}=\infty.$$
\end{proof}

\textbf{Proof of the continuous dependence on the initial data}. This proof follows the same ideas of Koch and Tzvetkov in \cite{KT2003}.\\

Since $T=T(\|u_0\|_{H^s})$, given $T'<T$, there exists a neighborhood $\mathcal U$ of $u_0$ in $H^s(\mathbb R^2)$ such that if $\tilde u_0\in\mathcal U$, then the IVP associated to the fractional two-dimensional BO equation with initial datum $\tilde u_0$ has a solution $\tilde u$ in $C([0,T'];H^s(\mathbb R^2))\cap L^1([0,T'];W_{x,y}^{1,\infty}(\mathbb R^2))$. Let us consider a sequence $\{u_{n,0}\}_{n\in\mathbb N}$ in $\mathcal U$, such that $u_{n,0}\to u_0$ in $H^s(\mathbb R^2)$ as $n\to\infty$. Let $u_n\in C([0,T'];H^s(\mathbb R^2))\cap L^1([0,T'];W_{x,y}^{1,\infty}(\mathbb R^2))$ be the solution of the fractional two-dimensional BO equation such that $u_n(0)=u_{n,0}$. Carrying out a procedure as the one used to obtain \eqref{5.18}, it can be proved that
\begin{align}
u_n\to u\quad\text{in $C([0,T'];L^2(\mathbb R^2))$}. \label{KT4.26}
\end{align}

We apply Lemma \ref{KTL4.1} with $\nu^n:=u_{n,0}$ and $\nu:=u_0$ to conclude that there exists a dyadic sequence $(w_\lambda)$ with $2^s w_\lambda\leq w_{2\lambda}\leq 2^{s+1} w_\lambda$ and $\dfrac{w_\lambda}{\lambda^s}\to\infty$ as $\lambda\to\infty$, such that $\displaystyle{\sup_n \sum_\lambda w_\lambda^2 \|(u_{n,0})_\lambda\|^2_{L^2}}<\infty$. By Lemma \ref{KTL3.6}, for $t\in[0,T']$ and $n\in\mathbb N$, we have that
\begin{align*}
\sum_\lambda w_\lambda^2 \|(u_n(t))_\lambda\|^2_{L^2}&\lesssim \exp(C\|\nabla u_n\|_{L^1_{T'}L_{x,y}^\infty}) \sum_\lambda w_\lambda^2 \|(u_{n,0})_\lambda \|_{L^2}^2\lesssim \sum_\lambda w_\lambda^2 \|(u_{n,0})_\lambda \|_{L^2}^2\\
&\lesssim \sup_n\left(\sum_\lambda w_\lambda^2 \|(u_{n,0})_\lambda \|_{L^2}^2 \right)<\infty,
\end{align*} 
where we use Lemma \ref{L5.3} to assure that there exists a constant $C$, independent from $n$, such that $\|\nabla u_n\|_{L^1_{T'}L_{x,y}^\infty}\leq C$.\\

Besides, by Lemma \ref{KTL3.6},
$$\sum_\lambda w_\lambda^2 \|(u(t))_\lambda \|_{L^2}^2\lesssim \sum_\lambda w_\lambda^2 \|(u_{0})_\lambda \|_{L^2}^2.$$
Hence
\begin{align}
\sup_n \sup_{0\leq t\leq T'} \sum_\lambda w_\lambda^2 \left( \|(u_n(t))_\lambda \|_{L^2}^2+\|(u(t))_\lambda\|_{L^2}^2\right)\equiv A^2<\infty. \label{KT4.27}
\end{align}

Let us define
\begin{align}
V_{\Lambda}:=\sum_{\lambda\leq \Lambda}v_\lambda \label{KT4.28}.
\end{align}

By the triangular inequality we obtain
\begin{align}
\|u_n-u\|_{L^\infty_{T'}H^s}\leq \|(U_n)_\Lambda-U_\Lambda\|_{L^\infty_{T'}H^s}+\|(U_n)_\Lambda-u_n\|_{L^\infty_{T'}H^s}+\|U_\Lambda-u\|_{L^\infty_{T'}H^s}.\label{KT4.29}
\end{align}

Let $\epsilon>0$ fixed. We initially prove that there exists a dyadic integer $\Lambda$ such that, for each $t\in[0,T']$,
\begin{align}
\sup_n \|(U_n)_\Lambda(t)-u_n(t)\|_{H^s}+\|U_\Lambda(t)-u(t)\|_{H^s}\lesssim\dfrac\epsilon2.\label{KT4.30}
\end{align}

In fact, using the Littlewood-Paley Theorem, we can conclude that
\begin{align*}
\|(U_n)_\Lambda(t)-u_n(t)\|_{H^s}+\|U_\Lambda(t)-u(t)\|_{H^s}&=\left\|\sum_{\lambda>\Lambda}(u_n(t))_\lambda\right\|_{H^s}+\left\|\sum_{\lambda>\Lambda} (u(t))_\lambda\right\|_{H^s}\\
&\lesssim \left(\sum_{\lambda>\Lambda} w_\lambda^2 \dfrac{\lambda^{2s}}{w_\lambda^2}\|(u_n(t))_\lambda \|_{L^2}^2 \right)^{1/2}+\left(\sum_{\lambda>\Lambda} w_\lambda^2 \dfrac{\lambda^{2s}}{w_\lambda^2}\|(u(t))_\lambda \|_{L^2}^2 \right)^{1/2}.
\end{align*}

Since $\displaystyle{\lim_{\lambda\to\infty}}\dfrac{\lambda^{2s}}{w_\lambda^2}=0$, there exists $\Lambda$ such that if $\lambda>\Lambda$, $\dfrac{\lambda^{2s}}{w_\lambda^2}<\dfrac{\epsilon^2}{16A^2}$. Therefore, if $\lambda>\Lambda$,
\begin{align*}
\|(U_n)_\Lambda(t)-u_n(t)\|_{H^s}+\|U_\Lambda(t)-u(t)\|_{H^s}&\lesssim \dfrac\epsilon{4A}\left(\sum_{\lambda>\Lambda} w_\lambda^2 \|(u_n(t))_\lambda\|_{L^2}^2 \right)^{1/2}+\dfrac\epsilon{4A} \left(\sum_{\lambda>\Lambda} w_\lambda^2 \|(u(t))_\lambda\|_{L^2}^2 \right)^{1/2}\\
&\lesssim \dfrac\epsilon{4A}A+\dfrac\epsilon{4A}A=\dfrac\epsilon2,
\end{align*}
which proves \eqref{KT4.30}. Taking into account \eqref{KT4.26}, there exists $N\in\mathbb N$, such that for each $n\geq N$ and $t\in[0,T']$,
\begin{align}
\|(U_n(t))_\Lambda-(U(t))_\Lambda\|_{H^s}\leq (2\Lambda)^s \|(U_n(t))_\Lambda-(U(t))_\Lambda\|_{L^2}<\dfrac\epsilon2.\label{KT4.31}
\end{align}

From \eqref{KT4.29}, \eqref{KT4.30} and \eqref{KT4.31} it follows that, for each $n\geq N$,
$$\|u_n-u\|_{L^\infty_{T'} H^s}\leq \dfrac \epsilon2+C\dfrac\epsilon2,$$
i.e. $\displaystyle{\lim_{n\to\infty}\| u_n-u\|_{L^\infty_{T'}H^s}}=0$. \qed

\newpage

\textbf{Acknowledgments}\\

Partially supported by Colciencias through grant number FP44842-087-2015 of the Fondo National de Financiamiento Para la Ciencia, Tecnología y la Innovación Francisco José de Caldas (Project Ecuaciones Diferenciales Dispersivas y Elípticas no Lineales).


\begin{thebibliography}{12}
\bibitem{AC1991} Ablowitz, M.J., Clarkson, P.A., \textit{Solitons, nonlinear evolution equations and inverse scattering}, vol. 149 of London Mathematical Society Lecture Notes Series. Cambridge University Press, Cambridge, (1991).
\bibitem{AS1980} Ablowitz, M.J., Segur, H., \textit{Long internal waves in fluids of great depth}, Stud. App. Math. {\bf {62}}, 3 (1980), 249-262.
\bibitem{AM} Akers, B., Milewski, P., \textit{A model equation for wave packet solitary waves arising from capillary-gravity flows}, Studies in Applied Mathematics, {\bf{122}} (2009), No. 3, 249-274.
\bibitem{CP2016} Cunha, A., Pastor, A., \textit{The IVP for the Benjamin-Ono-Zakharov-Kuznetsov equation in low regularity Sobolev spaces}, J. Differential Equations, {\bf{261}} (2016), 2041-2067.
\bibitem{EP2017} Esfahani, A., Pastor, A. \textit{Two dimensional waves in shear flows}, arXiv:1705.01627v1 [Math.AP] 3 May 2017.
\bibitem{Ka1975} Kato, T., \textit{Quasilinear equations of evolution, with applications to PDE}, Lecture Notes in Mathematics, vol. 448, Springer, Berlin, (1975), 25-70.
\bibitem{Ka1979} Kato, T., \textit{On the Korteweg-de Vries equation}, Manuscripta Math., {\bf 28} (1979), 89-99.
\bibitem{KaPo1988} Kato, T., Ponce, G., \textit{Commutator estimates and the Euler and Navier-Stokes equations}, Comm. Pure Appl. Math., {\bf{41}} (1988), No. 7, 891-907.
\bibitem{Ke2004} Kenig, C., \textit{On the local and global well-posedness theory for the KP-I equation}, Ann. I.H. Poincaré AN, {\bf{21}} (2004), 87-838.
\bibitem{KeKoe2003} Kenig, C., Koenig, K.D., \textit{On the local well posedness of the Benjamin-Ono and modified Benjamin-Ono equations}, Math. Res. Lett., {\bf 10} (2003), 879-895.
\bibitem{KPV1989} Kenig, C., Ponce, G., Vega, L., \textit{On the (generalized) Korteweg-de Vries equation}, Duke Mathematical Journal, {\bf 59} (1989), No. 3, 585-610.
\bibitem{KPV1991} Kenig, C., Ponce, G., Vega, L., \textit{Oscillatory integrals and regularity of dispersive equations}, Indiana  Univ. Math. J., {\bf 40} (1991), 33-69.
\bibitem{KPV1993} Kenig, C., Ponce, G., Vega, L., \textit{Well-posedness and scattering results for the generalized Korteweg-de Vries equation via the contraction principle}, Comm. Pure Appl. Math., {\bf{46}} (1993), 527-620.
\bibitem{K2006} Kim, B., \textit{Three-dimensional solitary waves in dispersive wave systems}. PhD thesis, Massachusets Institute of Technology, Department of Mathematics, Cambridge, MA, (2006).
\bibitem{KT2003} Koch, H., Tzvetkov, N., \textit{On the local well-posedness of the Benjamin-Ono equation in $H^s(\mathbb R)$}, {\bf IMRN} International Mathematics Research Notices, (2003), No. 26, 1449-1464.
\bibitem{LP2015} Linares, F., Ponce, G., \textit{Introduction to Nonlinear Dispersive Equations}, Universitext, Springer, (2015).
\bibitem{LPS2014} Linares, F., Pilod, D., Saut, J.C., \textit{Dispersive perturbations of Burgers and hyperbolic equations I: Local theory}, Siam J. Math. Anal., vol {\bf 46}, No. 2, (2014), 1505-1537.
\bibitem{LPS2017} Linares, F., Pilod, D., Saut, J.C., \textit{The Cauchy problem for the fractional Kadomtsev-Petviashvili equations}, arXiv:1705.09744v1 [math.AP] 27 May 2017.
\bibitem{MST2001} Molinet, L., Saut, J.C., Tzvetkov, N., \textit{Ill-posedness issues for the Benjamin-Ono and related equations}, Siam J. Math. Anal., {\bf{33}} (2001), No. 4 , 982-988.
\bibitem{PS1995} Pelinovsky, D.E., Shrira,V.I., \textit{Collapse transformation for self-focusing solitary waves in boundary-layer type shear flows}, Physics Letters A, {\bf 206} (1995), 195-202.
\bibitem{P1991} Ponce, G., \textit{On the global well-posedness of the Benjamin-Ono equation}, Differential Integral Equations, {\bf {4}} (1991), No. 3, 527-542. 
\bibitem{PS2015} Preciado, G., Soriano, F. \textit{On the Cauchy problem of a two-dimensional Benjamin-Ono equation}, arXiv:1503.04290v1 [Math.AP] 14 Mar 2015.
\bibitem{S1979} Saut, J.C., \textit{Sur quelques généralisations de l'équation de Korteweg-de Vries}, J. Math. Pures Appl., (9) {\bf{58}} (1979), No. 1, 21-61.
\bibitem{T2006} Tao, T., \textit{Nonlinear Dispersive Equations. Local and Global Analysis}, Regional Conference Series in Mathematics, Number 106, AMS, (2006).
\end{thebibliography}
\end{document}